\documentclass[12pt]{amsart}

\usepackage[english]{babel}
\usepackage[letterpaper,top=2cm,bottom=2cm,left=3cm,right=3cm,marginparwidth=1.75cm]{geometry}

\usepackage{amsmath}
\usepackage{amsthm}
\usepackage{amssymb}
\usepackage[draft]{graphicx}
\usepackage{mathtools}
\usepackage{comment}
\usepackage{tikz-cd} 

\theoremstyle{plain}
\newtheorem{theorem}{Theorem}[section]
\newtheorem{lemma}[theorem]{Lemma}
\newtheorem{proposition}[theorem]{Proposition}
\newtheorem{corollary}[theorem]{Corollary}

\theoremstyle{definition}
\newtheorem{definition}[theorem]{Definition}
\newtheorem{remark}[theorem]{Remark}

\usepackage{color}
\definecolor{e-mail}{rgb}{0,.40,.80}
\definecolor{reference}{rgb}{.20,.60,.22}
\definecolor{mrnumber}{rgb}{.80,.40,0}
\definecolor{citation}{rgb}{0,.40,.80}
\usepackage[breaklinks=true,colorlinks=true,linkcolor=reference, citecolor=citation, urlcolor=e-mail]{hyperref}
\newcommand{\frp}{\mathfrak{p}}

\newcommand{\cyc}{\mathrm{cyc}}
\newcommand{\E}{\mathrm{ell}}
\newcommand{\ab}{\mathrm{ab}}

\newcommand{\cw}{\mathrm{CW}}
\newcommand{\intlog}{\mathcal{L}}

\DeclareMathOperator{\tr}{Tr}
\DeclareMathOperator{\nr}{Nr}
\DeclareMathOperator{\cnr}{\mathcal{N}}
\DeclareMathOperator{\ctr}{\overline{\mathcal{S}}}
\DeclareMathOperator{\gal}{Gal}
\DeclareMathOperator{\rec}{rec}
\DeclareMathOperator{\res}{res}

\title[]{CM Elliptic Analogue of the Coleman--Ihara Formula}
\date{}

\author[]{Yuki Goto$^{\dagger\diamond}$}
\address{%
$\dagger$ Graduate School of Science and Technology,
Keio University, 3-14-1 Hiyoshi,
Kouhoku-ku, Yokohama 223-8522, Japan.}
\address{%
$\diamond$ RIKEN Center for Advanced Intelligence Project (AIP),
1-4-1 Nihonbashi, Chuo-ku,
Tokyo 103-0027, Japan.}
\email{yuki0831@keio.jp}
\author[]{Shun Ishii$^{\ddagger}$}
\address{%
$\ddagger$ Research Institute for Mathematical Sciences (RIMS),
Kyoto University, Kyoto 606-8502, Japan.}

\begin{document}

\begin{abstract}
The Soul\'{e} characters are Kummer characters constructed from
cyclotomic units and also arising from the Galois action on the pro-$p$
geometric fundamental group of $\mathbb{P}^1\setminus\{0,1,\infty\}$.
The Coleman--Ihara formula describes the restrictions of these characters
to the local Galois group at $p$ as products of the Coates--Wiles
homomorphisms and Kubota--Leopoldt $p$-adic $L$-values at positive odd
integers.
In this paper, we prove an elliptic analogue of this formula for elliptic
curves with complex multiplication by the ring of integers of an imaginary
quadratic field $K$.
We consider certain Kummer characters associated with elliptic units,
which we call elliptic Soul\'{e} characters.
These characters are elliptic counterparts of the classical Soul\'{e}
characters and also arise from the Galois action on the pro-$p$ geometric
fundamental groups of once-punctured elliptic curves.
For suitable pairs of positive integers $(i,j)$, we give an explicit
description of their restrictions to the relevant local Galois groups as
products of the two-variable Coates--Wiles homomorphisms introduced by
Yager and the two-variable $p$-adic $L$-values of $K$.
\end{abstract}

\maketitle

\begin{center}
\tableofcontents
\end{center}

\section{Introduction}
Let $p$ be an odd prime. For any field $F$ of characteristic zero, write
$G_F:=\gal(\overline{F}/F)$ for its absolute Galois group. 
For each $n\geq 1$, let $\mu_{p^n}$ denote the group of $p^n$-th roots of unity, 
and fix a compatible system $\{\zeta_{p^n}\}_{n\geq 1}$ of primitive $p^n$-th roots of unity.
This induces an identification $\mathbb{Z}_p \xrightarrow{\sim} 
\mathbb{Z}_p(1):=\varprojlim_n \mu_{p^n}$ sending $1$ to $(\zeta_{p^n})_n$.
The Soul\'e characters are Kummer characters arising from cyclotomic units.
More precisely, for an odd integer $m\geq 3$, the $m$-th Soul\'e character 
$\chi_m\colon G_{\mathbb{Q}(\mu_{p^\infty})}\rightarrow \mathbb{Z}_p(m) := \mathbb{Z}_p(1)^{\otimes m}$
is defined by the following Kummer property: for $\sigma\in G_{\mathbb{Q}(\mu_{p^\infty})}$ and $n\geq 1$,
\[
\zeta_{p^n}^{\chi_m(\sigma)} = \left(\prod_{\substack{1 \le a < p^n \\ p \nmid a}} 
(1-\zeta_{p^n}^a)^{\frac{a^{m-1}}{p^n}} \right)^{\sigma-1}, 
\]
where $\chi_m(\sigma)$ is regarded as an element of $\mathbb{Z}_p$ via the
identification $\mathbb{Z}_p(m)\cong\mathbb{Z}_p$ induced by the above choice. 
The character $\chi_m$ originates in Soul\'{e}'s work on higher $p$-adic
regulators \cite{Sou81} and is obtained by restricting a cohomology class in
$H^1(G_{\mathbb{Q}},\mathbb{Z}_p(m))$, constructed from cyclotomic units
via Kummer theory, to $G_{\mathbb{Q}(\mu_{p^\infty})}$.
It also admits a geometric interpretation through Ihara's
study of the Galois action on the pro-$p$ geometric \'{e}tale fundamental group
of $\mathbb{P}^1\setminus\{0,1,\infty\}$ \cite{Iha86}.

The local behavior of the Soul\'e character $\chi_m$ at $p$ is described 
by the Coleman--Ihara formula.
For each $n \geq 1$, let $U_n$ be the group of principal units of
$\mathbb{Q}_p(\mu_{p^n})$, and set $\mathbf{U}_\infty:=\varprojlim_n U_n$
with respect to the norm maps. Local class field theory gives the
reciprocity map $\rec\colon \mathbf{U}_\infty\to G_{\mathbb{Q}_p(\mu_{p^\infty})}^{\ab}$.
Let $\phi_m^\cw\colon \mathbf{U}_\infty\to \mathbb{Z}_p(m)$ be the
$m$-th Coates--Wiles homomorphism, and let $L_p(s,\omega^{1-m})$ be the
Kubota--Leopoldt $p$-adic $L$-function attached to $\omega^{1-m}$, where
$\omega$ denotes the Teichm\"{u}ller character.
Fix an embedding $\overline{\mathbb{Q}}\hookrightarrow \overline{\mathbb{Q}}_p$,
and identify the corresponding decomposition subgroup of $G_{\mathbb{Q}(\mu_{p^\infty})}$ 
with $G_{\mathbb{Q}_p(\mu_{p^\infty})}$. With this convention, the classical
Coleman--Ihara formula asserts that
\[
\frac{\chi_m(\rec(\boldsymbol{\epsilon}))}{p^{m-1}-1} = L_p(m,\omega^{1-m})\phi_m^{\mathrm{CW}}(\boldsymbol{\epsilon}) \qquad (\boldsymbol{\epsilon}\in \mathbf{U}_\infty).
\]
Thus, it describes the restriction of the Soul\'{e} character $\chi_m$ to the
local Galois group at $p$ in terms of the Coates--Wiles homomorphism and a
Kubota--Leopoldt $p$-adic $L$-value.

Historically, the Coleman--Ihara formula arose from Ihara's study of his 
universal power series for Jacobi sums \cite{Iha86}.
From the coefficients of the logarithm of this power series,
Ihara obtained certain characters and local formulae relating them to
Coates--Wiles homomorphisms and Kubota--Leopoldt $p$-adic $L$-values.
Combined with Coleman's identification of these characters with the restrictions of 
Soul\'{e} characters, these formulae yield the comparison now known as the Coleman--Ihara formula,
whose proof relies on Coleman's explicit reciprocity law for Hilbert norm residue symbols \cite{Col81,Iha86}.
In subsequent work, Anderson, Coleman, and Ihara--Kaneko--Yukinari
developed explicit global formulae in this setting
\cite{And89,Col89,IKY87}.
Although these global formulae reveal deep connections with Iwasawa theory and the arithmetic of
cyclotomic fields, the Coleman--Ihara formula itself is essentially local:
it describes the restriction of a Soul\'{e} character to the image of the local reciprocity map.
This local viewpoint has led to several generalizations.
Nakamura, Sakugawa, and Wojtkowiak proved polylogarithmic analogues, 
replacing Soul\'{e} characters and Kubota--Leopoldt $p$-adic $L$-values by 
$p$-adic polylogarithmic characters and Coleman $p$-adic polylogarithms, respectively \cite{NSW17a,NSW17b}.
Burns and Sano formulated a higher-rank analogue using Euler systems and
generalized Stark elements, which recovers the classical formula over
$\mathbb{Q}$ and gives a conjectural extension to totally real fields
\cite{BS25}.

In this paper, we prove a two-variable CM elliptic analogue of the Coleman--Ihara formula.
Let $K$ be an imaginary quadratic field of class number one with ring of integers $O_K$, 
and put $w_K:=\#O_K^\times$.
We fix an elliptic curve $E$ with complex multiplication by $O_K$. 
Let $p \ge 5$ be a prime that splits in $K$ as $(p)=\frp\bar{\frp}$.
We choose a suitably normalized generator $\pi$ of $\frp$, 
and write $\bar{\pi}$ for its complex conjugate.
The analogues of Soul\'{e} characters in this setting are the elliptic 
Soul\'{e} characters $\chi_{(i,j)}^\E$ (cf.~Definition~\ref{def:elliptic_soule}), 
originating in Nakamura's work on Galois 
representations associated with once-punctured elliptic curves \cite{Na95}.
A Coleman--Ihara-type formula for the characters
$\chi_{(m,m)}^\E$ along the cyclotomic line was previously obtained in
\cite{IG26}. In the present paper, we generalize this to the
two-variable setting, namely to $\chi_{(i,j)}^\E$ for positive integers
$i,j$ satisfying $i\equiv j\pmod{w_K}$.
The formula below involves the two-variable $p$-adic $L$-function
$L_p(s_1,s_2,\omega^{(i,j)})$ of $K$ 
(cf.~Definition~\ref{def:two-variable_p-adic_L}), as studied by de Shalit
\cite{dS87}, where $\omega^{(i,j)}$ is a certain finite character.
We write $\mathbf{U}_\infty^\E$ for the group of norm-compatible systems
of principal units associated with the tower of $p$-power division fields
of $E$.
The logarithmic derivatives of the corresponding Coleman power series give
rise to the two-variable Coates--Wiles homomorphisms $\phi_{(i,j)}^\cw$ 
(cf.~Definition~\ref{def:coates-wiles}), introduced by Yager \cite{Yag82}.
We also let $\Omega_p$ denote a $p$-adic period associated with $E$, and write
$\rec_\infty$ for the reciprocity map on $\mathbf{U}_\infty^\E$
(cf.~\eqref{eq:reciprocity-map}).
The main result of this paper is the following.

\begin{theorem}[{Corollary~\ref{cor:CM_elliptic_Coleman_Ihara}}]
\label{thm:intro_CM_elliptic_Coleman_Ihara}
For each pair of positive integers $(i,j) \neq (1,1)$ such that
$i\equiv j\pmod{w_K}$, the following identity holds:
\[
\begin{aligned}
    \frac{\chi_{(i,j)}^\E(\rec_\infty(\boldsymbol{\beta}))}
    {1-\pi^{j-1}\bar{\pi}^{i-1}}
        &= 12w_K
        \left(
        1-\frac{1}{1-\pi^{i-1}\bar{\pi}^{j-1}}
        -\frac{1}{1-\pi^{j-1}\bar{\pi}^{i-1}}
        \right) \\
        &\phantom{={}}
        \cdot
        \frac{L_p(1-i,1-j,\omega^{(i,j)-1})}
        {\Omega_p^{j-i}}
        \,\phi_{(i,j)}^\cw(\boldsymbol{\beta}) \qquad (\boldsymbol{\beta}\in \mathbf{U}_\infty^\E).
\end{aligned}
\]
Here, we regard $\chi_{(i,j)}^\E$ as a character on the product of the
local Galois groups at the primes above $\frp$ (cf.~\eqref{eq:embedding_to_global}).
\end{theorem}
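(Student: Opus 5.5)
The approach is to mimic the classical proof of the Coleman--Ihara formula through an explicit reciprocity law, now in the two-variable Lubin--Tate setting. First I will express $\chi_{(i,j)}^\E$ as a Kummer character of a norm-compatible system of elliptic units. By Definition~\ref{def:elliptic_soule}, $\chi_{(i,j)}^\E(\sigma)$ is obtained from the action of $\sigma$ on $p^n$-th roots of products $\prod_{a} \Theta(a)^{\kappa_n(a)}$, where $\Theta$ is the elliptic function, $a$ runs over $E[p^n]$-torsion with weights of the form $\langle a\rangle^{i-1}\overline{\langle a\rangle}^{\,j-1}/p^n$, and $\kappa_n$ records these weights. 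For $\boldsymbol{\beta}\in\mathbf{U}_\infty^\E$, evaluating $\chi_{(i,j)}^\E(\rec_\infty(\boldsymbol{\beta}))$ then amounts to computing a limit of Hilbert norm residue symbols $(\beta_n, \theta_n)_{p^n}$, paired against the Weil-pairing/Tate-module coordinates. This is where the identification $\mathbb{Z}_p(i,j)$ and the embedding \eqref{eq:embedding_to_global} enter.

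Second, I will apply Wiles' explicit reciprocity law for Lubin--Tate formal groups (the elliptic analogue of Coleman's law) to these symbols. Writing $f_{\boldsymbol{\beta}}$ for the Coleman power series of $\boldsymbol{\beta}$ and $g$ for that of the elliptic-unit system, the symbol becomes a residue or trace expression. This expression involves $\log f_{\boldsymbol{\beta}}$ (or its logarithmic derivative) paired with $\mathrm{dlog}\, g$ and the formal logarithm $\lambda$. Expanding in the variable $\lambda(X)$, the coefficient extraction isolates $\frac{d^{k}}{d\lambda^{k}}\log f_{\boldsymbol{\beta}}\big|_{0}$, which by Definition~\ref{def:coates-wiles} is $\phi_{(i,j)}^\cw(\boldsymbol{\beta})$, up to the factor $\Omega_p^{j-i}$ from comparing the formal group of $E$ with $\widehat{\mathbb{G}}_m$. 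The remaining factor is a moment of the measure attached to $g$, namely $\int x^{i-1}\bar{x}^{\,j-1}\,d\mu_\Theta$ over $\mathbb{Z}_p^\times\times\mathbb{Z}_p^\times$ (or over $O_{K,p}^\times$). By the construction in Definition~\ref{def:two-variable_p-adic_L}, this moment equals $L_p(1-i,1-j,\omega^{(i,j)-1})$ times the constant $12w_K$ that comes from the normalization of $\Theta$.

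Third, I will account for the Euler-type factors. The Kummer characters use all torsion of level $p^n$, whereas the measure is naturally supported on units. Passing between them introduces the factors $1-\pi^{j-1}\bar\pi^{i-1}$ and $1-\pi^{i-1}\bar\pi^{j-1}$ from the removal of the $\frp$- and $\bar\frp$-parts. The product over the two primes above $\frp$ in the global embedding produces the combination $1-\frac{1}{1-\pi^{i-1}\bar\pi^{j-1}}-\frac{1}{1-\pi^{j-1}\bar\pi^{i-1}}$, in analogy with the $p^{m-1}-1$ in the classical formula. The condition $i\equiv j \pmod{w_K}$ ensures the character is nontrivial on the $O_K^\times$-averaging, and $(i,j)\neq(1,1)$ avoids the pole.

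The main obstacle is the second step, specifically the precise two-variable bookkeeping. The reciprocity law is naturally one-variable along the Lubin--Tate tower for $\frp$, while the characters live on $\mathbb{Z}_p(1)^{\otimes i}\otimes T_{\bar\frp}^{\otimes j}$-type modules. I would try first to reduce to the known cyclotomic case treated in \cite{IG26} by a twisting argument: both sides are measures in $(i,j)$, the cyclotomic-line identity determines them on a Zariski-dense set, and the remaining directions are handled by interpolation together with Yager's description of $\phi^\cw_{(i,j)}$ as moments of the Coleman measure. If that fails, I would compute directly with Wiles' law, tracking the periods $\Omega_p$ and the Euler factors by hand.
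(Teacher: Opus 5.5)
\textbf{There is a genuine gap.} Your outline has the right overall shape: Kummer character, then Hilbert symbol, then an explicit reciprocity law, then a Coates--Wiles value times a moment of an elliptic-unit measure. But the steps that carry the proof are missing or misattributed.

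\textbf{Auxiliary ideal and the constants.} The Kummer elements $\epsilon_{(i,j),n}$ are built from $\theta$-values at \emph{all} points $a\omega_{\frp,n}+b\omega_{\bar{\frp},n}$. These values are not all units and are only defined modulo $p^n$-th powers, so neither a Coleman power series nor an explicit reciprocity law applies to them directly. The paper introduces an auxiliary ideal $\mathfrak{a}$, works with the norm-coherent units $\theta_{m,n}(\mathfrak{a})$ and the character $\chi^{\E}_{(i,j),\mathfrak{a}}$, and then combines two results:
\begin{itemize}
\item the comparison of Proposition~\ref{prop: compare character} (from \cite{Is25}), which is where $w_K$ and the bracket $\frac{1}{1-\pi^{i-1}\bar{\pi}^{j-1}}+\frac{1}{1-\pi^{j-1}\bar{\pi}^{i-1}}-1$ come from (an inclusion--exclusion over points non-primitive in the $\frp$- or $\bar{\frp}$-direction, not a ``product over the two primes above $\frp$'');
\item de Shalit's identity $\res_*\mu_{\boldsymbol{\theta}(\mathfrak{a})}=12\#\Delta(\sigma_{\mathfrak{a}}-\mathrm{N}\mathfrak{a})\mu_{\mathrm{dS}}$ (Proposition~\ref{prop:L-value}), which is the source of the $12$.
\end{itemize}
The factor $\kappa_1^{1-i}\kappa_2^{1-j}(\sigma_{\mathfrak{a}})-\mathrm{N}\mathfrak{a}$ then cancels between these two. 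The remaining factor $1-\pi^{j-1}\bar{\pi}^{i-1}$ on the left has a different origin from the one you give. It is $1-p^{i-1}\kappa_2^{j-i}(\phi)$, coming from $D^i\intlog=(1-p^{i-1}\phi)D^i\log$ on the $\boldsymbol{\beta}$-side, not from removing torsion points.

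\textbf{The reciprocity step.} Wiles' law computes the $E[\frp^n]$-valued Kummer pairing of $\widehat{E}$. The characters $\chi^{\E}_{(i,j)}$ are defined by the $\mu_{p^n}$-valued Hilbert symbol, and you give no bridge between the two. The paper's route is as follows.
\begin{itemize}
\item It works at the diagonal level $n=m$, where $K_{m,m}=F_m(\mu_{p^m})$, and applies Coleman's law for $\widehat{\mathbb{G}}_m$ there.
\item It transfers to the elliptic side through the key congruence $f_{\boldsymbol{\epsilon}_m}\equiv g_{\boldsymbol{\beta}_m}\circ\eta \pmod{(1+S)^{p^m}-1}$ (Proposition~\ref{prop: relation of log derivative}).
\item That congruence gives $D^k\intlog(f_{\boldsymbol{\epsilon}_m})(0)\equiv D^k\intlog(g_{\boldsymbol{\beta}_m}\circ\eta)(0)\pmod{p^m}$ for all $k\in\mathbb{Z}$. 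In particular it covers the negative exponent $k=1-i$ produced by the weight accelerator $\omega_m(i-1)$ and \cite[Lemma (4.6)]{Col89}. This is why the $L$-value appears at $(1-i,1-j)$; your moment $\int x^{i-1}\bar{x}^{\,j-1}\,d\mu_\Theta$ does not match this without explanation.
\item The $\bar{\frp}$-variable, which you flag as the main obstacle, is not handled by interpolation. The paper works semilocally over $\mathcal{F}_m=F_m\otimes_K K_\frp$, splits the twist as $\sum_{\tau\in\gal(F_m/K)}\kappa_2^{j-i}(\tau)\tau$, and uses $\tr_{\mathcal{F}_m/K_\frp}$ to factor the exponent into the product of the two sums.
\end{itemize}

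\textbf{The fallback fails.} Your plan to deduce the two-variable case from the cyclotomic one in \cite{IG26} does not work. The diagonal $\{(m,m)\}$ is not Zariski-dense in the two-dimensional weight space: a two-variable Iwasawa function can vanish on $s_1=s_2$ without vanishing identically. Moreover, $(i,j)\mapsto\chi^{\E}_{(i,j)}(\rec_\infty(\boldsymbol{\beta}))$ is not a priori known to be interpolated by a two-variable measure at all.
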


While preparing this manuscript, we learned that Sano and Venjakob
had independently obtained the same result  \cite{SV26}.

We prove the theorem by adapting the local method of Nakamura, Sakugawa, and
Wojtkowiak \cite{NSW17a} to the CM elliptic setting. Namely, we compare two
expressions for a Hilbert norm residue symbol. The first arises from the
Kummer property of elliptic units and yields the elliptic Soul\'e character.
The second is obtained from Coleman's explicit reciprocity law and is
expressed in terms of Coleman power series. To identify the resulting
expression with the product of the two-variable $p$-adic
$L$-function and the two-variable Coates--Wiles homomorphism, we compare the
Coleman power series arising on the cyclotomic and elliptic sides via an
isomorphism between the multiplicative formal group and the formal group of
$E$.

The paper is organized as follows.
In Section~2, we fix the notation used throughout the paper.
In Section~3, we introduce the elliptic Soul\'e characters and review their
basic properties.
Section~4 recalls the Coleman-theoretic ingredients needed for the proof:
Coleman power series in the cyclotomic and elliptic settings, Coleman's
explicit reciprocity law, and the two-variable Coates--Wiles homomorphisms 
and the two-variable $p$-adic $L$-function.
Finally, in Section~5, we combine these ingredients to prove the main
theorem and verify its compatibility with the formula obtained in
\cite{IG26}.
\section{Notation}\label{sec:notation}
In this section, we fix the notation used throughout this paper.
\begin{itemize}
    \item Let $K$ be an imaginary quadratic field of class number one, and let $O_K$ denote its ring of integers. 
    Write $d_K$ for the absolute value of the discriminant of $K$ and $w_K = \#O_K^\times$ for the number of roots of unity in $K$.
    We fix an algebraic closure $\overline{K}$ of $K$ and an embedding $\iota_\infty: \overline{K}\hookrightarrow\mathbb{C}$.

    \item Let $E$ be an elliptic curve defined over $K$ with complex multiplication by $O_K$.
    We fix a Weierstrass equation $y^2=4x^3-g_2x-g_3$ with $g_2,g_3\in K$ for $E$ and 
    identify $\mathbb{C}/L$ with $E(\mathbb{C})$ via the Weierstrass uniformization
    \[
    \xi(\;\cdot\;;L)\colon \mathbb{C}/L\xrightarrow{\sim}E(\mathbb{C}), \; z\mapsto \bigl(\wp(z;L):\wp'(z;L):1\bigr),
    \]
    where $\wp(z;L)$ is the Weierstrass $\wp$-function associated with the period lattice $L$.
    We also fix a period $\Omega\in\mathbb{C}^\times$ such that $L=\Omega O_K$. 
    For an ideal $\mathfrak{a}$ of $O_K$, we denote by $E[\mathfrak{a}]$ the $\mathfrak{a}$-torsion subgroup of $E$.

    \item We fix a prime $p\ge 5$ that splits in $K$ as $(p)=\frp\bar{\frp}$.
    We assume that $E$ has good reduction at $\frp$.
    We choose $\pi$ to be the generator of $\frp$ characterized by the convention
    \[
    (\frp,\,K(E[\bar{\frp}^\infty])/K)(u)=[\pi]_E(u)
    \qquad \text{for all } u \in E[\bar{\frp}^\infty]
    := \bigcup_{m\ge0} E[\bar{\frp}^m],
    \]
    where $(\frp,\,K(E[\bar{\frp}^\infty])/K)$ denotes the Artin symbol at $\frp$.
    Let $\bar{\pi}$ be the complex conjugate of $\pi$.
    Let $K_{\frp}$ and $K_{\bar{\frp}}$ denote the completions of $K$ at
    $\frp$ and $\bar{\frp}$, respectively, and write $O_{\frp}$ and
    $O_{\bar{\frp}}$ for their rings of integers.
    Since $p$ splits in $K$, we have
    $O_{\frp}\cong O_{\bar{\frp}}\cong \mathbb{Z}_p$.

    \item For each $n\ge 0$, the decomposition
    $O_K/p^nO_K\simeq O_K/\frp^n\oplus O_K/\bar{\frp}^n$ induces an isomorphism
    $
    E[p^n]\xrightarrow{\sim} E[\frp^n]\oplus E[\bar{\frp}^n].
    $
    We fix $\omega_{\frp,n}\in E[\frp^n]$ and
    $\omega_{\bar{\frp},n}\in E[\bar{\frp}^n]$ such that the image of
    $\xi(\Omega/p^n;L) \in E[p^n]$ under this isomorphism is
    $(-\omega_{\frp,n},\omega_{\bar{\frp},n})$.
    Let $\mu_{p^n}$ denote the group of all $p^n$-th roots of unity, and let
    $\zeta_{p^n}\in\mu_{p^n}$ be the image of
    $(\omega_{\bar{\frp},n},\omega_{\frp,n})$ under the Weil pairing
    $E[p^n]\times E[p^n]\to\mu_{p^n}$.
    We fix an embedding
    $\iota_p\colon \overline{K}\hookrightarrow\mathbb{C}_p$
    corresponding to a place of $\overline{K}$ above $\frp$, together with a
    field isomorphism
    $\mathbb{C}\xrightarrow{\sim}\mathbb{C}_p$
    whose composition with $\iota_\infty$ is $\iota_p$.
    When there is no ambiguity, we identify $\mathbb{C}$ with
    $\mathbb{C}_p$ via this isomorphism and omit $\iota_\infty$ and
    $\iota_p$ from the notation.
    Then we have $\zeta_{p^n}^{\sqrt{-d_K}} = \exp(-2\boldsymbol{\pi}\boldsymbol{i}/p^n)$ 
    (see \cite[II.6.2]{dS87}), where $\boldsymbol{\pi}$ denotes 
    the usual circle constant and $\boldsymbol{i}=\sqrt{-1}$.
    
    \item For integers $m, n \ge 0$, we define
    \[
    F_m:=K(E[\bar{\frp}^m]), \qquad K_{m,n}:=F_m(E[\frp^n]) = K(E[\frp^n\bar{\frp}^m]).
    \]
    We note that $F_m(\mu_{p^n})=K_{m,n}$ whenever $n\le m$. 
    Set $K(E[p^\infty]) := \bigcup_{m,n \ge 0} K_{m,n}$ and $G := \gal(K(E[p^\infty])/K)$. 
    For a nonzero integral ideal $\mathfrak{a}$ of $O_K$, let $K(\mathfrak{a})$ denote 
    the ray class field of $K$ modulo $\mathfrak{a}$.
    We also let $K(p^\infty):=\bigcup_{m,n\ge 0}K(\frp^n\bar{\frp}^m)$ denote the maximal 
    abelian extension of $K$ unramified outside $p$, and write $\mathcal{G}:=\gal(K(p^\infty)/K)$.

    \item We define the following characters on $G$:
    \begin{itemize}
        \item Let $\chi_{\cyc}\colon G\to \mathbb{Z}_p^\times$ be the $p$-adic cyclotomic character, characterized by
        \[
        \sigma(\zeta_{p^n})=\zeta_{p^n}^{\chi_{\cyc}(\sigma)} \qquad (n\ge 1).
        \]
        \item Let $\kappa_1\colon G\to O_{\frp}^\times\cong \mathbb{Z}_p^\times$ and $\kappa_2\colon G\to O_{\bar{\frp}}^\times\cong \mathbb{Z}_p^\times$ be the characters describing the action of $G$ on $T_{\frp}E$ and
        $T_{\bar{\frp}}E$, characterized respectively by
        \[
        \sigma(\omega_{\frp,n})=[\kappa_1(\sigma)]_E(\omega_{\frp,n}),
        \quad
        \sigma(\omega_{\bar{\frp},n})=[\kappa_2(\sigma)]_E(\omega_{\bar{\frp},n}) \quad (n\ge 1).
        \]
    \end{itemize}
    By the Galois equivariance of the Weil pairing, we have $\kappa_1\kappa_2=\chi_{\cyc}$.
    We fix a lift $\phi\in G$ of the Artin symbol
    $(\frp,K(E[\bar{\frp}^{\infty}])/K)$ such that $\kappa_1(\phi)=1$. 
    Then $\kappa_2(\phi)=\pi$. 

    \item We introduce the formal groups used below.
    \begin{itemize}
        \item Let $\widehat{\mathbb{G}}_m$ be the formal multiplicative group over $\mathbb{Z}_p$ with parameter $S$.
        For $a\in \mathbb{Z}_p$, we define $[a]_{\widehat{\mathbb{G}}_m}(S)=(1+S)^a-1$.

        \item Let $\widehat{E}$ be the formal group of $E$ over $O_{\frp}$, with parameter $T=-2x/y$.
        Since $p$ splits in $K$ and $E$ has complex multiplication by $O_K$, the formal group $\widehat{E}$ has height $1$ over $O_{\frp}$ and carries its natural $O_{\frp}$-module structure.
        For $a\in O_{\frp}\cong \mathbb{Z}_p$, write $[a]_{\widehat{E}}$ for the corresponding endomorphism.
        For each integer $n \ge 0$, we let $\omega_n := T(\omega_{\frp, n})$.
    \end{itemize}

    \item For each integer $m\ge 0$, we define the following semilocal $K_{\frp}$-algebra:
    \[
    \mathcal{F}_m :=F_m\otimes_K K_{\frp} \cong \prod_{v\mid\frp}F_{m,v}. 
    \]
    Here and below, the products are indexed by the places $v$ of $F_m$
    above $\frp$, and $F_{m,v}$ denotes the completion of $F_m$ at $v$.
    Since the extensions $F_m(\mu_{p^n})/F_m$ and $K_{m,n}/F_m$ are
    totally ramified at every such $v$, the place $v$ extends uniquely to
    each of these fields; the corresponding completions are denoted by
    $F_{m,v}(\mu_{p^n})$ and $K_{m,n,v}$, respectively.
    We define a subring $\mathcal{O}_m$ of $\mathcal{F}_m$ by
    \[
    \mathcal{O}_m := \prod_{v\mid\frp} O_{F_{m,v}},
    \]
    Let $\widehat{\mathcal{O}}$ denote the completion of
    $\mathcal{O}_\infty:=\bigcup_{m\ge0}\mathcal{O}_m$. 
    The action of $G$ on $\mathcal{O}_\infty$ extends continuously to
    $\widehat{\mathcal{O}}$.
    The residue fields of $F_{m,v}(\mu_{p^n})$ and $K_{m,n,v}$ are
    naturally identified with that of $F_{m,v}$.
    Denote this common residue field by $k_{m,v}$ and set
    $k_m:=\prod_{v\mid\frp}k_{m,v}$.
    Then we have decompositions
    \[
    \prod_{v\mid\frp} O_{F_{m,v}(\mu_{p^n})}^\times
    \cong k_m^{\times}\times U_{m,n}^{\cyc},
    \qquad
    \prod_{v\mid\frp} O_{K_{m,n,v}}^\times
    \cong k_m^{\times}\times U_{m,n}^{\E},
    \]
    where $U_{m,n}^{\cyc}$ and $U_{m,n}^{\E}$ are the corresponding groups
    of principal units.
    In particular, $k_m$ is independent of $n$, and $k_m^\times$ is a
    finite group of order prime to $p$.
    For $x\in\prod_{v\mid\frp} O_{F_{m,v}(\mu_{p^n})}^\times$ or
    $x\in\prod_{v\mid\frp} O_{K_{m,n,v}}^\times$, write $\langle x\rangle$ 
    for its projection onto the second factor.
    Taking inverse limits with respect to the norm maps, define
    \[
    \mathbf{U}_m^{\cyc}:=\varprojlim_n U_{m,n}^{\cyc},
    \qquad
    \mathbf{U}_m^{\E}:=\varprojlim_n U_{m,n}^{\E},
    \]
    and
    \[
    \mathbf{U}_\infty^{\cyc}:=\varprojlim_m\mathbf{U}_m^{\cyc},
    \qquad
    \mathbf{U}_\infty^{\E}:=\varprojlim_m\mathbf{U}_m^{\E}.
    \]
    Since $F_m(\mu_{p^n})=K_{m,n}$ whenever $n\le m$, the identifications
    $U_{m,n}^{\E}=U_{m,n}^{\cyc}$ for $n\le m$ are compatible with the norm maps. 
    As the pairs $(m,n)$ satisfying
    $n\le m$ form a cofinal subsystem, they induce a canonical isomorphism
    $\mathbf{U}_\infty^{\E} \xrightarrow{\sim} \mathbf{U}_\infty^{\cyc}$.
    In particular, if $\boldsymbol{\beta}=(\beta_{m,n})_{m,n}\in\mathbf{U}_\infty^{\E}$
    corresponds to $\boldsymbol{\epsilon}=(\epsilon_{m,n})_{m,n} \in\mathbf{U}_\infty^{\cyc}$,
    then $\beta_{m,n}=\epsilon_{m,n}$ for $n\le m$.
    The fixed embedding $\iota_p$ induces a map from
    $\widehat{\mathcal{O}}$ to $\mathbb{C}_p$ by projection onto the component
    corresponding to the chosen place above $\frp$.
    We denote the image of an element under this map by $(\,\cdot\,)^*$.
\end{itemize}

\section{Elliptic Soul\'{e} characters}
In this section, we review the definition and basic properties of Kummer characters associated with elliptic units, which we call elliptic Soul\'{e} characters.

\subsection{Fundamental theta function}
First, we recall some basic properties of the fundamental theta function and elliptic
units. We fix an ordered $\mathbb{Z}$-basis $(\omega_1,\omega_2)$ of the
lattice $L$ such that $\tau:=\omega_1/\omega_2$ lies in the upper half-plane.
The Weierstrass $\sigma$-function associated with $L$ is defined by
\[
\sigma(z;L)
=
z\prod_{\lambda\in L\setminus\{0\}}
\left(1-\frac{z}{\lambda}\right)
\exp\left(
\frac{z}{\lambda}
+\frac{1}{2}\left(\frac{z}{\lambda}\right)^2
\right).
\]
The discriminant of $L$ is given by
\[
\Delta(L)
=
\left(\frac{2\boldsymbol{\pi}\boldsymbol{i}}{\omega_2}\right)^{12}
q_\tau\prod_{\nu=1}^{\infty}(1-q_\tau^\nu)^{24},
\qquad
q_\tau=e^{2\boldsymbol{\pi}\boldsymbol{i}\tau}.
\]
We further set
\[
A(L)
:=
\boldsymbol{\pi}^{-1}\mathrm{Area}(\mathbb{C}/L),
\qquad
s_2(L)
:=
\lim_{s\to 0}
\sum_{\lambda\in L\setminus\{0\}}
\lambda^{-2}|\lambda|^{-2s},
\]
where the latter series defines a holomorphic function for
$\operatorname{Re}(s)>0$ and admits a holomorphic continuation to a
neighborhood of $s=0$, with the limit taken after this continuation.
We then define the $\mathbb{R}$-linear function
\[
\eta(z;L):=A(L)^{-1}\bar{z}+s_2(L)z.
\]

\begin{definition}[{\cite[II.2.1(7) and II.2.3]{dS87}}]
    With the above notation, we define the following functions.
    \begin{enumerate}
        \item The fundamental theta function associated with $L$ is defined by
        \[
        \theta(z;L)
        :=
        \Delta(L)\exp\bigl(-6z\eta(z;L)\bigr)\sigma(z;L)^{12}.
        \]

        \item Let $\mathfrak{a}=(\alpha)$ be a nontrivial ideal of
        $O_K$ coprime to $p$. We define
        \begin{equation}\label{eq: product formula for theta_a}
            \theta_{\mathfrak{a}}(z;L)
            :=
            \frac{\theta(z;L)^{\mathrm{N}\mathfrak{a}}}
             {\theta(z;\mathfrak{a}^{-1}L)}
            =
            \alpha^{-12}\Delta(L)^{\mathrm{N}\mathfrak{a}-1}
            \prod_{P\in E[\mathfrak{a}]\setminus\{0\}}
            \bigl(\wp(z;L)-\wp(P;L)\bigr)^{-6}.
        \end{equation}
    \end{enumerate}
\end{definition}

\begin{remark}
    We make some remarks on the functions defined above.
    \begin{itemize}
        \item The fundamental theta function can be written as $\theta(z;L)=\Delta(L)\mathfrak{k}(z,L)^{12}$, where $\mathfrak{k}(z,L) := e^{-\frac{z\eta(z;L)}{2}}\sigma(z;L)$
        is the Klein form associated with $L$, in the notation of Kubert and Lang \cite[p.~27]{KL81}. 
        Moreover, $\theta(z;L)$ coincides with the twelfth power of the Siegel function in their book.

        \item Since $\eta(z;L)$ involves $\bar{z}$, the function $\theta(z;L)$ is not
        holomorphic. Moreover, it is not periodic with respect to $L$. In the
        quotient defining $\theta_{\mathfrak{a}}(z;L)$, however, the non-holomorphic
        factors cancel, and \eqref{eq: product formula for theta_a} shows that
        $\theta_{\mathfrak{a}}(z;L)$ is a meromorphic elliptic function on
        $\mathbb{C}/L$.
    \end{itemize}
\end{remark}

\begin{lemma}
    [{\cite[Lemma 2.3]{IG26}, \cite[II.2.4]{dS87}}]
    \label{lem: algebraicity of theta}
    The following properties hold.
    \begin{enumerate}
        \item Let $\omega\in E[p^\infty]$ be a nonzero $p$-primary torsion
        point, and let $\widetilde{\omega}\in\mathbb{C}$ be any lift of
        $\omega$ under the uniformization
        $\mathbb{C}/L\cong E(\mathbb{C})$. 
        Then $\theta(\widetilde{\omega};L)\in K(p^\infty)^\times$.
        Moreover, the image of $\theta(\widetilde{\omega};L)$ in
        $\widehat{K(p^\infty)}^\times := \varprojlim_n
        K(p^\infty)^\times/K(p^\infty)^{\times p^n}$
        is independent of the choice of $\widetilde{\omega}$. 

        \item Let $\mathfrak{a}$ be a nontrivial ideal of $O_K$ prime to $p$.
        Let $m,n\geq 1$, and let
        $\omega\in E[\frp^n\bar{\frp}^m]$ be primitive 
        (i.e. $\omega\notin E[\frp^{n-1}\bar{\frp}^m]$ and
        $\omega\notin E[\frp^n\bar{\frp}^{m-1}]$).
        Then $\theta_{\mathfrak{a}}(\omega;L)$ is a unit in the ring of
        integers of $K(\frp^n\bar{\frp}^m)$.
    \end{enumerate}
\end{lemma}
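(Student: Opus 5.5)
For part (1), I will use the classical theory of Siegel functions in Kubert--Lang.
By the remark above, $\theta(z;L)=\Delta(L)\mathfrak{k}(z,L)^{12}$, so $\theta(\widetilde{\omega};L)$ is the twelfth power of a Siegel function evaluated at a torsion point.

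Write $\omega$ of exact order $\mathfrak{f}=\frp^n\bar{\frp}^m$. Choose a lift of the form $\widetilde\omega=a_1\omega_1/N+a_2\omega_2/N$, where $N$ is the rational integer generating $\mathfrak{f}\cap\mathbb{Z}$, a power of $p$. Then
\[
\theta(\widetilde{\omega};L)=\Delta(L)\,\omega_2^{-12}\cdot\bigl(\omega_2^{12}\mathfrak{k}_{(a_1/N,a_2/N)}(\tau)^{12}\bigr),
\]
up to the normalization of the Siegel function $g_a(\tau)^{12}$. I then argue in three steps.

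First, $g_a^{12}$ is a modular function of level $N$ with Fourier coefficients in $\mathbb{Q}(\mu_N)$. Since $\Delta(L)$ is homogeneous of weight $-12$ and $\Delta(\tau)$ pairs correctly with $\omega_2^{-12}$, the value is a modular function of level $N$ evaluated at the CM point $\tau$.

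Second, by Shimura reciprocity, such values lie in the ray class field $K(N)$, which is contained in $K(p^\infty)$. I will cite \cite[II.2.4]{dS87} for the precise statement that the value lies in $K(\mathfrak{f})$. It is nonzero because $\sigma$ vanishes only on $L$.

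Third, I treat the independence of the lift. Changing $\widetilde\omega$ by $\lambda\in L$ multiplies $\theta$ by a root of unity, via the quasi-periodicity of the Klein form: $\mathfrak{k}(z+\lambda)=\epsilon\,\mathfrak{k}(z)$ with $\epsilon$ a $2N$-th root of unity. For $\lambda\in L$ and $z\in N^{-1}L$, the twelfth power makes this factor a $p$-power root of unity, possibly times a root of unity of order dividing $12$ that in fact cancels. A $p$-power root of unity lies in $K(p^\infty)^{\times p^k}$ for every $k$, because $\mu_{p^\infty}\subset K(p^\infty)$ is divisible. Hence the image in $\widehat{K(p^\infty)}^\times$ is unchanged.

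The main point to check in this step is that no roots of unity of order prime to $p$ appear. If such a factor did occur, I would absorb it by noting that it lies in the prime-to-$p$ torsion, which is killed in the pro-$p$ completion only after adjusting. The explicit Kubert--Lang transformation formula, $\epsilon=\pm e^{2\pi i(\cdot)/2N}$, squared $12$ times, gives a $p$-power root of unity, since $p$ is odd and $\pm1$ disappears under the twelfth power.

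For part (2), I use the product formula \eqref{eq: product formula for theta_a}. This shows $\theta_{\mathfrak a}(z;L)$ is a rational function in $\wp(z)$ with coefficients in $K(E[\mathfrak a])$, which up to the $\alpha^{-12}\Delta$ factor is defined over $K$. Evaluating it at $\omega$, together with the Galois action, gives $\theta_{\mathfrak a}(\omega;L)\in K(\frp^n\bar\frp^m)$, for instance by Shimura reciprocity as in de Shalit.

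For the unit property, I take the standard route: write $\theta_{\mathfrak a}(\omega)=\theta(\omega)^{N\mathfrak a}/\theta(\omega;\mathfrak a^{-1}L)$. Because $\omega$ is primitive for a conductor divisible by two distinct primes $\frp$ and $\bar\frp$, each Siegel value $\theta(\omega;\cdot)$ is already a unit. This follows from the Kubert--Lang/Robert result that Siegel units at torsion points whose order is not a prime power are units, via the distribution relation $\prod_{u}(\text{values at } \omega+u)$, which links them to values of smaller conductor. Alternatively, one can argue locally: at primes away from $\mathfrak f\mathfrak a$, good reduction ensures $\wp(\omega)-\wp(P)$ has the expected valuation, and the powers of $\Delta$ and $\alpha$ balance.

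I expect this unit property, and in particular controlling the valuation at primes above $\frp$ and $\bar\frp$, to be the main obstacle. I would simply invoke \cite[II.2.4]{dS87}, which treats exactly the case of mixed conductor $\frp^n\bar\frp^m$ with $m,n\ge1$.
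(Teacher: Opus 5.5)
The paper gives no argument of its own for this lemma. It quotes (i) from \cite[Lemma 2.3]{IG26} and (ii) from \cite[II.2.4]{dS87}.

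Your part (ii) rests on the same classical input and is fine. Siegel values at a point whose annihilator $\frp^n\bar{\frp}^m$ has two distinct prime factors are units, and you apply this to both $\theta(\widetilde{\omega};L)$ and $\theta(\widetilde{\omega};\mathfrak{a}^{-1}L)$. Rationality over $K(\frp^n\bar{\frp}^m)$ holds because $\theta_{\mathfrak a}(\,\cdot\,;L)$ is a rational function of $\wp$ over $K$ that is invariant under $z\mapsto uz$ for $u\in O_K^\times$.

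Your Step 3 is also correct. Quasi-periodicity of the Klein form gives $\theta(z+\lambda;L)=\exp\bigl(6A(L)^{-1}(z\bar{\lambda}-\lambda\bar{z})\bigr)\,\theta(z;L)$, which is a $\mu_N$-multiple of $\theta(z;L)$ for $z\in N^{-1}L$ and $\lambda\in L$. Your hedging about prime-to-$p$ roots of unity is unnecessary: every root of unity in $K(p^\infty)$ is trivial in $\widehat{K(p^\infty)}^\times$.

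However, Steps 1--2 of (i) contain two false claims.

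\emph{First claim: the level of $g_a^{12}$.} The function $g_a^{12}$ is not modular of level $N$. Its $q$-expansion is $q^{6B_2(a_1)}$ times a series in $q^{1/N}$, where $B_2(x)=x^2-x+\tfrac16$. Take $a_1=r/N$ with $p\nmid r$. Then the translation $\tau\mapsto\tau+N$, which lies in $\Gamma(N)$, multiplies $g_a^{12}$ by $\exp(2\boldsymbol{\pi}\boldsymbol{i}\cdot 6r^2/N)\neq 1$. Kubert--Lang's level-$N$ statement concerns $g_a^{12N}$, not $g_a^{12}$.

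\emph{Second claim: the field of the value.} The value at an arbitrary lift does not lie in $K(\mathfrak f)$, and your own Step 3 shows this. Take the two lifts $\Omega/\pi$ and $\Omega/\pi+\Omega$ of one primitive $\frp$-torsion point. The ratio of their $\theta$-values is $\exp(-2\boldsymbol{\pi}\boldsymbol{i}\cdot 6k/p)$, where $k$ is the nonzero integer with $\pi-\bar{\pi}=k\sqrt{-d_K}$. Since $(\pi+\bar{\pi})^2+d_Kk^2=4p$, $k$ is prime to $p$, so this ratio is a primitive $p$-th root of unity. But $\mu_p\not\subset K(\frp)$, because $K(\mu_p)/K$ is ramified at $\bar{\frp}$. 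So at most one of the two values lies in $K(\frp)$, and this claim cannot be taken from \cite[II.2.4]{dS87} for arbitrary lifts.

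\emph{Repair.} The fix is short and gives exactly what (i) asserts.
\begin{enumerate}
\item Let $\alpha\in\Gamma(N^2)$ and $a\in N^{-1}\mathbb{Z}^2$, and put $b:=a\alpha-a$. Then $b\in N\mathbb{Z}^2$, so $a_1b_2-a_2b_1\in\mathbb{Z}$.
\item Hence the twelfth power of the Klein-form factor $\pm\exp\bigl(\pm\boldsymbol{\pi}\boldsymbol{i}(a_1b_2-a_2b_1)\bigr)$ equals $1$.
\item So $g_a^{12}$ is invariant under $\Gamma(N^2)$, and its $q$-expansion has coefficients in $\mathbb{Q}(\mu_{N^2})$.
\item Shimura reciprocity then gives $\theta(\widetilde{\omega};L)=g_a(\tau)^{12}\in K(N^2)\subset K(p^\infty)$ for every lift.
\end{enumerate}
Combined with your Step 3, this completes the proof of (i).
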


\subsection{Elliptic units and elliptic Soul\'e characters}
For each $n\geq 1$ and each pair of integers $0\leq a,b<p^n$ with $p\nmid\gcd(a,b)$, 
choose a lift $\widetilde{\omega}_{a,b} \in \mathbb{C}$ of 
$a\omega_{\frp,n}+b\omega_{\bar{\frp},n}\in E[p^n]$. 
By Lemma \ref{lem: algebraicity of theta}~(i), the class of
$\theta(\widetilde{\omega}_{a,b};L)$ in $K(p^\infty)^\times/K(p^\infty)^{\times p^n}$
is independent of the choice of $\widetilde{\omega}_{a,b}$. We denote this class simply by
$\theta(a\omega_{\frp,n}+b\omega_{\bar{\frp},n};L)$.
For positive integers $i,j$ and each $n\geq 1$, we define
\begin{equation}\label{eq:norm_system}
    \epsilon_{(i,j),n}
    :=
    \prod_{\substack{0\leq a,b<p^n\\p\nmid\gcd(a,b)}}
    \theta(a\omega_{\frp,n}+b\omega_{\bar{\frp},n};L)^{
    a^{i-1}b^{j-1}}
    \in
    K(p^\infty)^\times/K(p^\infty)^{\times p^n}.
\end{equation}

\begin{remark}
    The element $\epsilon_{(i,j),n}$ is independent of the choice of the
    period $\Omega$ satisfying $L=\Omega O_K$. Indeed, replacing $\Omega$
    by $u\Omega$ with $u\in O_K^\times$ replaces
    $\omega_{\frp,n}$ and $\omega_{\bar{\frp},n}$ by their respective
    $u$-multiples. Since $uL=L$, the scaling relation
    \[
    \theta(cz;cL)=\theta(z;L)
    \qquad
    (c\in\mathbb{C}^\times)
    \]
    gives $\theta(uz;L)=\theta(z;L)$. Hence every factor in \eqref{eq:norm_system} remains unchanged.
\end{remark}

By the theory of complex multiplication, $K(p^n)$ is generated over $K$ 
by the values of a Weber function at the $p^n$-torsion points 
(see \cite[Chap.~II, Theorem~5.6]{Sil94}).
Hence $K(p^\infty)$ is the fixed field of $\Delta:=\gal(K(E[p^\infty])/K(p^\infty))$, where $\Delta$ is the kernel of
the composite map
\[
G \xrightarrow{(\kappa_1,\kappa_2)}
O_\frp^\times\times O_{\bar{\frp}}^\times
\longrightarrow
(O_\frp^\times\times O_{\bar{\frp}}^\times)/O_K^\times.
\]
We now assume that $i\equiv j\pmod{w_K}$.
Since the two components of the image of each element under the embedding
$O_K^\times \hookrightarrow O_\frp^\times\times O_{\bar{\frp}}^\times$
are inverses of each other, $\kappa_1^i\kappa_2^j$ factors through
the restriction map $\res\colon G\twoheadrightarrow\mathcal{G}$,
defining a character
\[
\kappa_1^i\kappa_2^j\colon\mathcal{G}\to\mathbb{Z}_p^\times.
\]

\begin{lemma}[{\cite[Lemma 1.2]{IG26}}]
    \label{lem:norm-compatible_system}
    For every $n\geq 1$, the following assertions hold.
    \begin{enumerate}
        \item For every $\sigma\in\mathcal{G}$, one has
        \[
        \sigma\bigl(\epsilon_{(i,j),n}\bigr)
        =
        \epsilon_{(i,j),n}^{
        \kappa_1^{1-i}\kappa_2^{1-j}(\sigma)}.
        \]
        In particular, $\epsilon_{(i,j),n}$ is fixed by $\gal(K(p^\infty)/K(p^n))$.

        \item Under the natural projection
        \[
        K(p^\infty)^\times/K(p^\infty)^{\times p^{n+1}}
        \to
        K(p^\infty)^\times/K(p^\infty)^{\times p^n},
        \]
        the class $\epsilon_{(i,j),n+1}$ maps to
        $\epsilon_{(i,j),n}$.
    \end{enumerate}
\end{lemma}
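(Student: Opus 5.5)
To prove Lemma~\ref{lem:norm-compatible_system}, I will work directly with the product \eqref{eq:norm_system}, using only the Galois action on the values $\theta(\omega;L)$ and one distribution relation for $\theta$. The plan for (i) is to use the Galois equivariance of the fundamental theta function at torsion points, which comes from Shimura reciprocity. For $\sigma\in\mathcal{G}$, pick a lift $\tilde\sigma\in G$. The relevant fact is that $\tilde\sigma\bigl(\theta(\omega;L)\bigr)=\theta(\tilde\sigma(\omega);L)$ in $K(p^\infty)^\times/K(p^\infty)^{\times p^n}$ for $\omega\in E[p^n]\setminus\{0\}$. This can be justified from the description of $\theta$ as the twelfth power of a Siegel function, i.e.\ Kubert--Lang, or from \cite[II.2.4]{dS87}. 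Then
\[
\tilde\sigma(a\omega_{\frp,n}+b\omega_{\bar\frp,n})=a\kappa_1(\tilde\sigma)\,\omega_{\frp,n}+b\kappa_2(\tilde\sigma)\,\omega_{\bar\frp,n}.
\]
Reindex by $a'\equiv a\kappa_1(\tilde\sigma)$ and $b'\equiv b\kappa_2(\tilde\sigma)\pmod{p^n}$; this is a bijection on the pairs with $p\nmid\gcd(a,b)$. The exponent becomes $a^{i-1}b^{j-1}\equiv \kappa_1^{1-i}\kappa_2^{1-j}(\tilde\sigma)\,a'^{i-1}b'^{j-1}\pmod{p^n}$, and this congruence is enough because we work modulo $p^n$-th powers. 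The choice of lift does not matter: for $u\in O_K^\times$ we have $\theta(uz;L)=\theta(z;L)$, as in the preceding remark, and $\kappa_1^{1-i}\kappa_2^{1-j}$ is trivial on $\Delta$ because $i\equiv j\pmod{w_K}$. The final assertion of (i) follows since $\kappa_1,\kappa_2\equiv 1\pmod{p^n}$ on $\gal(K(p^\infty)/K(p^n))$ up to a root of unity, and that root of unity cancels for the same reason.

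For (ii), I will take $\epsilon_{(i,j),n+1}$ and group the pairs $(a,b)$ modulo $p^{n+1}$ according to their residues $(a_0,b_0)$ modulo $p^n$. Write $a=a_0+kp^n$ and $b=b_0+lp^n$ with $0\le k,l<p$. Then $a^{i-1}b^{j-1}\equiv a_0^{i-1}b_0^{j-1}\pmod{p^n}$. Hence modulo $p^n$-th powers the factor attached to $(a_0,b_0)$ is
\[
\prod_{k,l}\theta\bigl(Q+kp^n\omega_{\frp,n+1}+lp^n\omega_{\bar\frp,n+1};L\bigr)^{a_0^{i-1}b_0^{j-1}},
\]
where $Q=a_0\omega_{\frp,n+1}+b_0\omega_{\bar\frp,n+1}$. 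The inner product runs over $Q+E[p]$. Here I will use the distribution relation $\prod_{P\in E[p]}\theta(z+P;L)=\theta(pz;L)$, or its version for $\theta(\cdot;p^{-1}L)$ together with the homogeneity $\theta(cz;cL)=\theta(z;L)$. Note that $p\,Q=a_0\omega_{\frp,n}+b_0\omega_{\bar\frp,n}$, because the $\omega_{\ast,n}$ are compatible under multiplication by $p$; this compatibility should follow from the compatible choice via $\xi(\Omega/p^n;L)$.

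There is an extra term from pairs with $p\mid\gcd(a_0,b_0)$ but $p\nmid\gcd(a,b)$. I will handle it by noting that the full product over all $(a,b)$, including those divisible by $p$, is also controlled by the distribution relation. The terms with both coordinates divisible by $p$ have exponent divisible by $p^{(i-1)+(j-1)}$ times a lower-level product. These should be absorbed, since $(i,j)\ne(1,1)$ or otherwise with care. In the case $i=j=1$ I would treat the terms directly.

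I expect the main obstacle to be the distribution relation with exact constants. The relation $\prod_{P\in E[p]}\theta(z+P;L)=\theta(pz;L)$ holds only up to roots of unity or constants in the non-holomorphic setting. I will try to prove it first by comparing divisors of the elliptic functions $\theta_{\mathfrak a}$ and then removing the auxiliary $\mathfrak a$. Since everything is read modulo $p^n$-th powers in $K(p^\infty)^\times$, and that group contains enough roots of unity and the constants are $p$-adically harmless, residual ambiguities should be killable by raising to the $\mathrm{N}\mathfrak{a}$ power, which is prime to $p$. Alternatively I would cite \cite[II.2.5]{dS87} for this relation directly.
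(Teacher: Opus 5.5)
The paper does not prove this lemma itself; it quotes it from \cite[Lemma 1.2]{IG26}. Your outline is the standard argument and is sound: for (i), Galois equivariance $\tilde\sigma(\theta(\omega;L))=\theta(\tilde\sigma\omega;L)$ followed by reindexing; for (ii), grouping the level-$(n+1)$ factors into cosets $Q+E[p]$ and applying the distribution relation. Two parts of your write-up need correcting.

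\textbf{The ``extra term'' in (ii) does not exist.} Since $n\ge 1$, you have $a=a_0+kp^n\equiv a_0$ and $b\equiv b_0 \pmod p$. Hence $p\nmid\gcd(a,b)$ if and only if $p\nmid\gcd(a_0,b_0)$. So the level-$(n+1)$ index set is exactly the full preimage of the level-$n$ one, and each fibre is $Q+E[p]$. Moreover $pQ=a_0\omega_{\frp,n}+b_0\omega_{\bar{\frp},n}\neq 0$, which is precisely the hypothesis the distribution relation needs. Delete that paragraph. No hypothesis $(i,j)\neq(1,1)$ is required, and the lemma does not assume one. Your proposed absorption would not have worked anyway: it only makes the exponents divisible by $p^{i+j-2}$, not by $p^n$.

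\textbf{The constant in the distribution relation.} At torsion points $z$ with $pz\neq 0$, the relation $\prod_{P\in E[p]}\theta(z+P;L)=\theta(pz;L)$ holds up to a root of unity. This is the distribution relation for Siegel functions, since $\theta$ is the twelfth power of one, and it can be checked on $q$-expansions. Every root of unity in $K(p^\infty)$ is a $p^n$-th power there, because $\mu_{p^\infty}\subset K(p^\infty)$; this is the same fact that makes the classes $\theta(\omega;L)$ well defined. So you can simply cite the relation.

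Your fallback through $\theta_{\mathfrak a}$ does not work as described. Raising to the $\mathrm{N}\mathfrak a$-th power does not remove the second factor in $\theta^{\mathrm{N}\mathfrak a}=\theta_{\mathfrak a}\cdot\theta(\,\cdot\,;\mathfrak a^{-1}L)$. If you eliminate that factor using (i), you are left with $\epsilon_{(i,j),n}$ raised to $\kappa_1^{1-i}\kappa_2^{1-j}(\sigma_{\mathfrak a})\bigl(\kappa_1^{i}\kappa_2^{j}(\sigma_{\mathfrak a})-1\bigr)$. This exponent is a non-unit for every $\mathfrak a$ whenever $p-1$ divides both $i$ and $j$, so you cannot invert it.
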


For each $n \ge 1$, we use the same notation $\kappa_1$ and $\kappa_2$ 
for the mod-$p^n$ characters $\kappa_1,\kappa_2\colon  \gal(K(E[p^n])/K) 
\to (\mathbb{Z}/p^n\mathbb{Z})^\times$.
In particular, when $i\equiv j\pmod{w_K}$, the character
$\kappa_1^i\kappa_2^j$ factors through $\gal(K(p^n)/K)$, and hence defines
\[
\kappa_1^i\kappa_2^j\colon \gal(K(p^n)/K)\to (\mathbb{Z}/p^n\mathbb{Z})^\times.
\]
By the above lemma, the family $\bigl(\epsilon_{(i,j),n}\bigr)_{n\geq 1}$
determines an element of $\widehat{K(p^\infty)}^\times$.
Using this Galois equivariance, we define the associated Kummer characters
as follows.

\begin{definition}\label{def:elliptic_soule}
    Let $(i,j)$ be a pair of positive integers such that $i \equiv j \pmod{w_K}$. 
    \begin{itemize}
        \item We define the $(i, j)$-th elliptic Soul\'e character 
        \[
        \chi_{(i,j)}^\E: G_{K(p^\infty)}^\ab \to \mathbb{Z}_p(i,j) := \mathbb{Z}_p(\kappa_1^i\kappa_2^j)
        \]
        to be the unique character satisfying the following relation for every $n \ge 1$ and $\sigma \in G_{K(p^\infty)}^\ab$: 
        \[
        \zeta_{p^n}^{\chi_{(i,j)}^\E(\sigma)} = \left(\epsilon_{(i,j),n}^{1/p^n}\right)^{\sigma-1}.
        \]
        \item For a nontrivial ideal $\mathfrak{a}$ of $O_K$ prime to $p$,  
        we define
        \[
        \chi_{(i,j),\mathfrak{a}}^{\E}: G_{K(p^\infty)}^\ab \to \mathbb{Z}_p(i,j)
        \]
        to be the unique character satisfying the following relation for every $n \ge 1$ and $\sigma \in G_{K(p^\infty)}^\ab$:
        \[
        \zeta_{p^n}^{\chi_{(i,j),\mathfrak{a}}^{\E}(\sigma)} = \left(\prod_{\tau\in\gal(K(p^n)/K)} \tau\left(\theta_\mathfrak{a}\left(\frac{\Omega}{p^n}; L\right)\right)^{\frac{\kappa_1^{i-1}\kappa_2^{j-1}(\tau)}{p^n}}\right)^{\sigma-1}. 
        \]
    \end{itemize}
\end{definition}
\begin{remark}
    We make some remarks on elliptic Soul\'e characters.
    \begin{itemize}
        \item For integers $i,j \ge 2$ satisfying $i \equiv j \pmod{w_K}$, 
        sufficient conditions for the nontriviality of $\chi_{(i,j)}^\E$
        are given in \cite{Is25}.
        
        \item The elliptic Soul\'{e} characters were first introduced by Nakamura \cite{Na95}. 
        His characters $\kappa_{i,j}\colon G_{K(E[p^\infty])}\to\mathbb{Z}_p$
        are defined by the same Kummer construction as in
        Definition~\ref{def:elliptic_soule}, using the elements given in
        \cite[(3.11.5)]{Na95}.
        Nakamura proved \cite[Theorem~A]{Na95} that these characters describe
        the Galois action on the maximal metabelian quotient of the pro-$p$
        geometric fundamental group of $E\setminus\{O\}$.
        His construction applies to arbitrary elliptic curves over number fields, 
        without the CM assumption.

        \item The difference from Nakamura's formulation lies only in the choice of
        a basis of the $p$-adic Tate module $T_pE$ and a generator of
        $\mathbb{Z}_p(1)$.
        Nakamura uses the basis $(\boldsymbol{\omega}_1,\boldsymbol{\omega}_2)$ of $T_pE$, where
        $\boldsymbol{\omega}_r:=\bigl(\xi(\omega_r/p^n;L)\bigr)_n$ for $r=1,2$,
        and the generator
        $\boldsymbol{\xi}:=\bigl(\exp(2\boldsymbol{\pi}\boldsymbol{i}/p^n)\bigr)_n$ of $\mathbb{Z}_p(1)$,
        whereas we use
        $\boldsymbol{\omega}_{\frp}:=(\omega_{\frp,n})_n$,
        $\boldsymbol{\omega}_{\bar{\frp}}:=(\omega_{\bar{\frp},n})_n$, and
        $\boldsymbol{\zeta}:=(\zeta_{p^n})_n$.
        By restricting to $G_{K(E[p^\infty])}$, we may define
        $\chi_{(i,j)}^\E$ for arbitrary positive integers $i$ and $j$
        by the same Kummer relation.
        A direct calculation under the corresponding change of basis shows
        that
        \[
        \sum_{i+j=m}
        \kappa_{i,j}\,
        \boldsymbol{\omega}_1^{\otimes i}
        \otimes
        \boldsymbol{\omega}_2^{\otimes j}
        \otimes
        \boldsymbol{\xi}
        \quad\text{and}\quad
        \sum_{i+j=m}
        \chi_{(i+1,j+1)}^\E\,
        \boldsymbol{\omega}_{\frp}^{\otimes i}
        \otimes
        \boldsymbol{\omega}_{\bar{\frp}}^{\otimes j}
        \otimes
        \boldsymbol{\zeta}
        \]
        define the same element of 
        $H^1\!\left(
        G_{K(E[p^\infty])},
        \operatorname{Sym}^m T_pE(1)
        \right)^{\gal(K(E[p^\infty])/K)}$.
    \end{itemize}
\end{remark}

\begin{proposition}[{\cite[Proposition 4.2]{Is25}}]\label{prop: compare character}
    Let $\mathfrak{a}$ be a nontrivial ideal of $O_K$ prime to $p$, and let
    $\sigma_{\mathfrak{a}} = \left(\mathfrak{a}, K(p^\infty)/K\right) \in \gal(K(p^\infty)/K)$ 
    be the corresponding Artin symbol.
    Then, for each pair of positive integers $(i,j) \neq (1,1)$ such that 
    $i \equiv j \pmod{w_K}$, the following 
    relation holds: 
    \[
    w_K \left(\frac{1}{1 - \pi^{i-1}\bar{\pi}^{j-1}} + \frac{1}{1 - \pi^{j-1}\bar{\pi}^{i-1}} - 1\right) \chi_{(i,j),\mathfrak{a}}^\E = (-1)^i\left(\kappa_1^{1-i}\kappa_2^{1-j}(\sigma_{\mathfrak{a}}) - \operatorname{N}\mathfrak{a}\right) \chi_{(i,j)}^\E.
    \]
\end{proposition}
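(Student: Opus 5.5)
The plan is to compare the two Kummer elements modulo $p^n$-th powers. Both characters are Kummer characters of elements of $\widehat{K(p^\infty)}^\times$, so it suffices to prove the corresponding identity between the Kummer generators:
\[
\Theta_{\mathfrak{a},n}:=\prod_{\tau\in\gal(K(p^n)/K)}\tau\bigl(\theta_{\mathfrak{a}}(\Omega/p^n;L)\bigr)^{\kappa_1^{i-1}\kappa_2^{j-1}(\tau)}
\]
and $\epsilon_{(i,j),n}$, in $K(p^\infty)^\times/K(p^\infty)^{\times p^n}$ (up to the stated scalars, interpreted in $\mathbb{Z}/p^n$).

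First, expand $\theta_{\mathfrak{a}}=\theta(z;L)^{\mathrm{N}\mathfrak{a}}/\theta(z;\mathfrak{a}^{-1}L)$. The Galois action on the values is given by Shimura reciprocity: for $\tau=\sigma_{\mathfrak{b}}$, the value $\theta(\Omega/p^n;L)^\tau$ equals $\theta$ evaluated at the torsion point $\kappa(\tau)\cdot\xi(\Omega/p^n)$. Here I write $\kappa(\tau)$ for the pair $(\kappa_1(\tau),\kappa_2(\tau))$ acting on $(-\omega_{\frp,n},\omega_{\bar{\frp},n})$.

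As $\tau$ runs over $\gal(K(p^n)/K)\cong (O_K/p^n)^\times/O_K^\times$, these points run over the primitive $p^n$-torsion points modulo units. Writing the point as $a\omega_{\frp,n}+b\omega_{\bar{\frp},n}$ with $p\nmid ab$, the exponent becomes $(-a)^{i-1}b^{j-1}$. This is where the sign $(-1)^{i}$ comes from, after combining with the factor $\kappa_1^{1-i}\kappa_2^{1-j}$ in the next step. Invariance under $O_K^\times$ holds because $i\equiv j \pmod{w_K}$, and the unit multiplicities produce the factor $w_K$. Similarly, $\theta(\cdot;\mathfrak{a}^{-1}L)$ at these points equals $\sigma_{\mathfrak{a}}^{-1}$ (or $\sigma_{\mathfrak{a}}$) applied to $\theta(\cdot;L)$. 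By Lemma~\ref{lem:norm-compatible_system}(i), this contributes the factor $\kappa_1^{1-i}\kappa_2^{1-j}(\sigma_{\mathfrak{a}})$. Together these give the factor $(\kappa_1^{1-i}\kappa_2^{1-j}(\sigma_{\mathfrak{a}})-\mathrm{N}\mathfrak{a})$, up to sign.

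Second, relate the product over points with $p\nmid ab$ to the full product $\epsilon_{(i,j),n}$, which runs over $p\nmid\gcd(a,b)$. The missing terms are those with $p\mid a$, $p\nmid b$, those with $p\nmid a$, $p\mid b$, and those with both $p\mid a$ and $p\mid b$. Terms of the last kind do not occur in $\epsilon_{(i,j),n}$ itself, but they appear when rescaling.

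For $p\mid a$, write $a=pa'$ and use the norm (distribution) relation of $\theta$. The product of $\theta(P+Q)$ over $Q\in E[\frp]$ equals $\theta([\pi]P)$, and the analogous relation holds for $\bar{\frp}$. Iterating identifies the partial products with $\epsilon_{(i,j),n-k}$ raised to powers of $\pi^{i-1}\bar\pi^{j-1}$ or $\pi^{j-1}\bar\pi^{i-1}$, twisted by Frobenius acting through $\kappa$. Summing the resulting geometric series $p$-adically produces $\frac{1}{1-\pi^{i-1}\bar\pi^{j-1}}$ and $\frac{1}{1-\pi^{j-1}\bar\pi^{i-1}}$. The $-1$ corrects the double count of the $p\mid a,b$ region. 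One must check that each series converges modulo $p^n$, which uses $(i,j)\neq(1,1)$; in that case one of $i-1,j-1$ is positive, so the relevant powers are divisible by $p$. Lemma~\ref{lem:norm-compatible_system}(ii) is needed to pass between levels.

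The main obstacle is the second step. It requires the bookkeeping of the distribution relations for $\theta$ restricted to the $\frp$- and $\bar{\frp}$-parts separately. It also requires pinning down exactly which of $\pi,\bar\pi$ attaches to which exponent, given the normalizations $\kappa_2(\phi)=\pi$ and the sign in $\xi(\Omega/p^n)=(-\omega_{\frp,n},\omega_{\bar{\frp},n})$. I would first verify the formula on the cyclotomic line $i=j$, which should reduce to the known computation in \cite{IG26}, and then carry the two-variable version through with the same inclusion–exclusion.
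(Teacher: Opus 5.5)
The paper does not prove this proposition; it is quoted from \cite[Proposition~4.2]{Is25}. So I assess your argument on its own.

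\textbf{Step 1 is sound.} Reindexing the Galois orbit of $\xi(\Omega/p^n;L)$ gives $w_K\chi^{\E}_{(i,j),\mathfrak a}=(-1)^i\bigl(\kappa_1^{1-i}\kappa_2^{1-j}(\sigma_{\mathfrak a})-\mathrm{N}\mathfrak a\bigr)\chi'$. Here $\chi'$ is the Kummer character of the product over $p\nmid ab$. Lemma~\ref{lem:norm-compatible_system}(i) is stated for the full product, but the same reindexing works because $\{p\nmid ab\}$ is stable under $(O_K/p^n)^\times$.

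\textbf{The gap is in Step 2.} You claim both geometric series converge because ``one of $i-1,j-1$ is positive''. This is false when exactly one of $i,j$ equals $1$, which the hypotheses allow, e.g.\ $(i,j)=(1,p)$ or $(1,1+w_K)$. Take $i=1$. Under $\iota_p$, $\pi\in\frp$ while $\bar\pi$ is a unit. So the ratio attached to the region $p\mid a$ is $\pi^{i-1}\bar\pi^{j-1}=\bar\pi^{j-1}\in\mathbb{Z}_p^\times$, and iterating never kills the remainder. At finite level each regrouping step even loses a power of $p$ of precision, since $a^{i-1}=1$ supplies no factor of $p$. Worse, when $(p-1)\mid(j-1)$ one has $1-\bar\pi^{j-1}\in p\mathbb{Z}_p$. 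Then $1/(1-\bar\pi^{j-1})\notin\mathbb{Z}_p$, so it cannot be a $p$-adic limit of the integral partial sums $\sum_{k<N}\bar\pi^{(j-1)k}$. As written, your argument only covers $i,j\ge 2$.

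\textbf{The missing idea: no summation is needed.} The relation is an exact self-similarity.

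For $\sigma\in G_{K(p^\infty)}$, let $\mu_\sigma\bigl((a,b)+p^n\mathbb{Z}_p^2\bigr)\in\mathbb{Z}/p^n$ be the Kummer value at $\sigma$ of $\theta(a\omega_{\frp,n}+b\omega_{\bar\frp,n};L)$. The $E[p]$-distribution relation makes this a $\mathbb{Z}_p$-valued measure on compact opens of $\mathbb{Z}_p^2\setminus\{0\}$. The relation $\prod_{Q\in E[\frp]}\theta(P+Q;L)=\theta([\pi]P;L)$ says precisely $\mu_\sigma(\pi U)=\mu_\sigma(U)$, with $\pi$ acting componentwise on $O_\frp\times O_{\bar\frp}=\mathbb{Z}_p^2$; the same holds for $\bar\pi$.

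Put $I(U)=\int_U a^{i-1}b^{j-1}\,d\mu_\sigma$, $x=\pi^{i-1}\bar\pi^{j-1}$ and $y=\pi^{j-1}\bar\pi^{i-1}$.
\begin{itemize}
\item Since $p\mathbb{Z}_p\times\mathbb{Z}_p^\times=\pi(\mathbb{Z}_p\times\mathbb{Z}_p^\times)$, you get exactly $(1-x)\,I(\mathbb{Z}_p\times\mathbb{Z}_p^\times)=I(\mathbb{Z}_p^\times\times\mathbb{Z}_p^\times)$.
\item Symmetrically, $(1-y)\,I(\mathbb{Z}_p^\times\times\mathbb{Z}_p)=I(\mathbb{Z}_p^\times\times\mathbb{Z}_p^\times)$.
\item Use inclusion--exclusion for $\mathbb{Z}_p^2\setminus p\mathbb{Z}_p^2=(\mathbb{Z}_p\times\mathbb{Z}_p^\times)\cup(\mathbb{Z}_p^\times\times\mathbb{Z}_p)$. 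The $-1$ in the formula removes the doubly counted $\mathbb{Z}_p^\times\times\mathbb{Z}_p^\times$. This yields $(1-x)(1-y)\,\chi^{\E}_{(i,j)}=(1-xy)\,\chi'$.
\end{itemize}
The hypothesis $(i,j)\neq(1,1)$ is used only to guarantee $x\neq 1\neq y$, since their complex absolute values are $p^{(i+j-2)/2}$. This lets you divide. The resulting identity is one of $\mathbb{Q}_p(i,j)$-valued characters, which is how the stated formula must be read in these cases anyway.
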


\section{Coleman theory in the cyclotomic and CM elliptic settings}

In this section, we review the theory of Coleman power series in both 
the cyclotomic and elliptic settings, and introduce the two-variable 
Coates--Wiles homomorphisms and the two-variable 
$p$-adic $L$-function appearing in our formula.

\subsection{Coleman power series in the cyclotomic setting}
In this subsection, we recall Coleman power series in the cyclotomic setting
and Coleman's explicit reciprocity law, which expresses Hilbert norm residue
symbols in terms of these power series. 

First, we set
\[
\widehat{\mathcal{O}}[[S]]^{\times1}
:=
\left\{
f(S)\in\widehat{\mathcal{O}}[[S]]^\times
\mathrel{}\middle|\mathrel{}
f(0)\equiv1\pmod{p}
\right\}.
\]
For $f(S)\in\widehat{\mathcal{O}}[[S]]^{\times1}$ and
$c\in\mathbb{Z}_p$, define
\[
f(S)^c
:=
\sum_{r=0}^{\infty}
\binom{c}{r}\bigl(f(S)-1\bigr)^r,
\qquad
\binom{c}{r}
:=
\frac{c(c-1)\cdots(c-r+1)}{r!}.
\]
Since $f(S)-1\in(p,S)\widehat{\mathcal{O}}[[S]]$,
this series converges in the $(p,S)$-adic topology and defines an element
of $\widehat{\mathcal{O}}[[S]]^{\times1}$.
For $\sigma\in G$ and $f(S)\in\widehat{\mathcal{O}}[[S]]$, let
$f^\sigma(S)$ denote the power series obtained by applying $\sigma$ to the
coefficients of $f(S)$. We define a left action of $G$ on
$\widehat{\mathcal{O}}[[S]]$ by
\[
(\sigma\cdot f)(S)
:=
f^\sigma\left(
[\chi_\cyc(\sigma)]_{\widehat{\mathbb{G}}_m}(S)
\right).
\]
This action preserves $\widehat{\mathcal{O}}[[S]]^{\times1}$. Hence, for
$f(S)\in\widehat{\mathcal{O}}[[S]]^{\times1}$ and a finite sum
$\sum_\sigma c_\sigma\sigma\in\mathbb{Z}_p[G]$, we may define
\[
f(S)^{\sum_\sigma c_\sigma\sigma}
:=
\prod_\sigma
\bigl((\sigma\cdot f)(S)\bigr)^{c_\sigma}.
\]
For each $m\geq0$, set $\mathcal{O}_m[[S]]^{\times1}
:= \mathcal{O}_m[[S]]^\times \cap \widehat{\mathcal{O}}[[S]]^{\times1}$.

\begin{definition}[{\cite[I.2.2]{dS87}}]
    Let $m\geq0$ be an integer. For every norm-compatible system
    $\boldsymbol{\epsilon}_m=(\epsilon_{m,n})_n\in\mathbf{U}_m^\cyc$, the
    Coleman power series associated with $\boldsymbol{\epsilon}_m$ is the
    unique power series $f_{\boldsymbol{\epsilon}_m}(S) 
    \in\mathcal{O}_m[[S]]^{\times1}$ satisfying
    \[
    f_{\boldsymbol{\epsilon}_m}^{\phi^{-n}}
    (\zeta_{p^n}-1)
    =
    \epsilon_{m,n}
    \]
    for every $n\geq1$.
\end{definition}

Next, we introduce several operators on the power series ring.
The norm operator introduced in \cite{Col79} and the reduced trace operator 
introduced in \cite[p.~386]{Col81} can be defined in the same way over $\widehat{\mathcal{O}}$.

\begin{definition}[{\cite{Col79}, \cite[p.~386]{Col81}}]
\leavevmode
\begin{enumerate}
    \item The norm operator $\cnr\colon \widehat{\mathcal{O}}[[S]]^\times
    \to \widehat{\mathcal{O}}[[S]]^\times$
    is the unique multiplicative operator satisfying
    \[
    ([p]_{\widehat{\mathbb{G}}_m}\circ\cnr)(f)(S)
    =
    \prod_{\xi\in\mu_p} f(\xi(1+S)-1).
    \]

    \item The reduced trace operator $\ctr\colon \widehat{\mathcal{O}}[[S]]
    \to \widehat{\mathcal{O}}[[S]]$ is the unique additive operator satisfying
    \[
    ([p]_{\widehat{\mathbb{G}}_m}\circ\ctr)(f)(S)
    =
    \frac{1}{p}\sum_{\xi\in\mu_p} f(\xi(1+S)-1).
    \]

    \item We define the integral logarithm
    $\intlog\colon \widehat{\mathcal{O}}[[S]]^\times
    \to \widehat{\mathcal{O}}[[S]]$ by
    \[
    \intlog(f)(S)
    :=
    \frac{1}{p}
    \log\left(
        \frac{f(S)^p}{f^\phi((1+S)^p-1)}
    \right).
    \]

    \item We further define the differential operator
    $D\colon \widehat{\mathcal{O}}[[S]] \to \widehat{\mathcal{O}}[[S]]$
    by
    \[
    D:=(1+S)\frac{d}{dS}.
    \]
\end{enumerate}
\end{definition}

A simple computation shows that the integral logarithm restricts to a homomorphism
\begin{equation}\label{eq:integral_log}
    \intlog\colon (\widehat{\mathcal{O}}[[S]]^\times)^{\cnr=\phi}
    \to \widehat{\mathcal{O}}[[S]]^{\ctr=0}.
\end{equation}
For $f\in \widehat{\mathcal{O}}[[S]]$, let $\mu_f$ denote the measure
corresponding to $f$ under the Amice transform, so that
\begin{equation}\label{eq:amice_transform}
    f(S)=\int_{\mathbb{Z}_p}(1+S)^x\,d\mu_f(x).
\end{equation}

\begin{lemma}\label{lem: fundamental properties of Coleman power ser.}
    The following properties hold: 
    \begin{enumerate}
        \item For any $\sigma\in G$, $f_{\sigma(\boldsymbol{\epsilon}_m)}(S) = \sigma \cdot f_{\boldsymbol{\epsilon}_m}(S)$. 
        \item $\cnr(f_{\boldsymbol{\epsilon}_m})(S) = (f_{\boldsymbol{\epsilon}_m}^{\phi})(S)$. 
        \item The differential operator induces an isomorphism 
        $D\colon \widehat{\mathcal{O}}[[S]]^{\ctr=0} \xrightarrow{\sim} \widehat{\mathcal{O}}[[S]]^{\ctr=0}$. 
        Moreover, for each $f\in \widehat{\mathcal{O}}[[S]]^{\ctr=0}$ and each $k\in\mathbb{Z}$, one has
        \[
        D^k(f)(0) = \int_{\mathbb{Z}_p^\times} x^k\,d\mu_f(x).
        \]
    \end{enumerate}
\end{lemma}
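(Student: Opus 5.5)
The three statements are standard facts of Coleman theory, transported from the base $\mathbb{Z}_p$ to the semilocal ring $\widehat{\mathcal{O}}$. I plan to reduce each to the uniqueness of Coleman power series, or to an explicit Amice-transform computation, working componentwise over the places $v\mid\frp$ where convenient.

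For (i), I will check that $g:=\sigma\cdot f_{\boldsymbol{\epsilon}_m}$ satisfies the defining interpolation property for $\sigma(\boldsymbol{\epsilon}_m)$; uniqueness then gives $g=f_{\sigma(\boldsymbol{\epsilon}_m)}$. Since $G$ is abelian, $\phi^{-n}$ commutes with $\sigma$. Hence
\[
g^{\phi^{-n}}(\zeta_{p^n}-1)=f_{\boldsymbol{\epsilon}_m}^{\sigma\phi^{-n}}\bigl(\zeta_{p^n}^{\chi_{\cyc}(\sigma)}-1\bigr)=\sigma\Bigl(f_{\boldsymbol{\epsilon}_m}^{\phi^{-n}}(\zeta_{p^n}-1)\Bigr)=\sigma(\epsilon_{m,n}).
\]
The middle equality uses that $\sigma$ acts continuously on coefficients and sends $\zeta_{p^n}$ to $\zeta_{p^n}^{\chi_\cyc(\sigma)}$. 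One must also note that $g\in\mathcal{O}_m[[S]]^{\times1}$, because $\sigma$ preserves $\mathcal{O}_m$ and the substitution $[\chi_\cyc(\sigma)]_{\widehat{\mathbb{G}}_m}$ preserves the condition $f(0)\equiv1$.

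For (ii), I will show that $h:=\cnr(f_{\boldsymbol{\epsilon}_m})^{\phi^{-1}}$ is again a Coleman power series for $\boldsymbol{\epsilon}_m$; uniqueness then gives $h=f_{\boldsymbol{\epsilon}_m}$, which is the claim. Evaluating the defining identity of $\cnr$ at $S=\zeta_{p^{n+1}}-1$, the left side becomes $\cnr(f)(\zeta_{p^n}-1)$. The right side becomes $\prod_{\xi\in\mu_p}f(\xi\zeta_{p^{n+1}}-1)$, which is exactly the norm from level $n+1$ to level $n$ of $f(\zeta_{p^{n+1}}-1)$, since the conjugates of $\zeta_{p^{n+1}}$ over $F_{m,v}(\mu_{p^n})$ are the $\xi\zeta_{p^{n+1}}$ (the extension is totally ramified of degree $p$). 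Now apply $\phi^{-n-1}$ to the coefficients, which commutes with the norm, and use norm compatibility $N(\epsilon_{m,n+1})=\epsilon_{m,n}$. This yields $h^{\phi^{-n}}(\zeta_{p^n}-1)=\epsilon_{m,n}$ for all $n\ge1$. I also need $h\in\mathcal{O}_m[[S]]^{\times 1}$; this holds because $\cnr$ preserves $\mathcal{O}_m[[S]]^\times$ (Coleman's construction) and $\cnr(f)(0)\equiv f(0)^p\equiv f(0)$-type congruences keep the constant term $\equiv1\pmod p$.

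For (iii), I will work through the Amice transform. The identity $D\bigl((1+S)^x\bigr)=x(1+S)^x$ shows that $D$ corresponds to multiplying $\mu_f$ by $x$. The condition $\ctr f=0$ means $\frac1p\sum_{\xi}f(\xi(1+S)-1)=0$. Since $\frac1p\sum_{\xi}\xi^x=\mathbf{1}_{p\mathbb{Z}_p}(x)$, this is equivalent to $\mu_f$ being supported on $\mathbb{Z}_p^\times$. On such measures, multiplication by $x$ is a bijection with inverse multiplication by $x^{-1}$, since $x^{-1}$ is continuous on $\mathbb{Z}_p^\times$ and integral-valued. Hence $D$ is an isomorphism of $\widehat{\mathcal{O}}[[S]]^{\ctr=0}$, and $D^k$ makes sense for every $k\in\mathbb{Z}$. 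Finally, $D^kf(0)=\int x^k(1+0)^x\,d\mu_f=\int_{\mathbb{Z}_p^\times}x^k\,d\mu_f$.

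The main obstacle is (ii), because it rests on the existence and uniqueness of Coleman power series and on $\cnr$ preserving $\mathcal{O}_m[[S]]^\times$ over the semilocal, non-field base $\mathcal{O}_m$. I would handle this componentwise: each $F_{m,v}$ is unramified over $\mathbb{Q}_p$, so Coleman's theorem \cite{Col79} applies to each factor with Frobenius $\phi$, and the results assemble over $\prod_v$.
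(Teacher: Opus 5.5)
Your proposal is correct and matches the paper, which simply cites the local statements (\cite{dS87}, II.2.3, Corollary (iv) and (ii), and \cite{NSW17a}, Lemma 3.4) and notes that their proofs carry over componentwise over the places $v\mid\frp$; your uniqueness arguments for (i) and (ii) and your Amice-transform computation for (iii) are exactly those local proofs written out. One small fix: in (ii) the constant-term check should read $\cnr(f)(0)=\prod_{\xi\in\mu_p}f(\xi-1)\equiv f(0)^p\equiv 1 \pmod{p}$, since $f(0)^p\equiv f(0)$ does not hold in general.
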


\begin{proof}
    Assertions (i) and (ii) are the semilocal versions of
    \cite[II.2.3, Corollary (iv) and (ii)]{dS87}, respectively,
    while (iii) is the semilocal version of \cite[Lemma 3.4]{NSW17a}.
    The proofs are the same as in the local setting.
\end{proof}

For integers $m,n\ge 0$, let 
\[
\mathcal{F}_m(\mu_{p^n}) := F_m(\mu_{p^n})\otimes_K K_{\frp} \cong \prod_{v\mid\frp}F_{m,v}(\mu_{p^n}). 
\]
For each place $v$ of $F_m$ above $\frp$, let
$\rec_{m,n,v}\colon F_{m,v}(\mu_{p^n})^{\times} \to G_{F_{m,v}(\mu_{p^n})}^{\ab}$
denote the local reciprocity map. 
Taking the product of these maps, we obtain 
\[
\rec_{m,n}\colon \mathcal{F}_m(\mu_{p^n})^{\times} \to \prod_{v\mid\frp}G_{F_{m,v}(\mu_{p^n})}^{\ab},
\quad (a_v)_v \mapsto \bigl(\rec_{m,n,v}(a_v)\bigr)_v.
\]
By the norm compatibility of local reciprocity, these maps induce a
continuous injection
\[
\rec_\infty\colon \mathbf{U}_\infty^\cyc \hookrightarrow \prod_{v\mid\frp}G_{K(E[p^\infty])_v}^{\ab}, 
\]
where the product is taken over the places $v$ of $K(E[p^\infty])$ above $\frp$.
For each such place $v$, choose an extension of $v$ to $\overline{K}$ and identify 
$G_{K(E[p^\infty])_v}$ with the corresponding decomposition subgroup of $G_{K(E[p^\infty])}$.  
Let $\lambda_v\colon G_{K(E[p^\infty])_v}^{\ab} \to G_{K(E[p^\infty])}^{\ab}$
denote the homomorphism induced on abelianizations by the inclusion of this decomposition subgroup.
Since different choices of an extension of $v$ give conjugate embeddings,
the homomorphism $\lambda_v$ is independent of this choice.
Since there are only finitely many places of $K(E[p^\infty])$ above $\frp$, we define
\begin{equation}\label{eq:embedding_to_global}
    \lambda_\frp\colon \prod_{v\mid\frp} G_{K(E[p^\infty])_v}^{\ab} \to G_{K(E[p^\infty])}^{\ab},
    \quad (\sigma_v)_v \mapsto \prod_{v\mid\frp}\lambda_v(\sigma_v).
\end{equation}

\begin{proposition}\label{prop:injectivity-of-reciprocity-map}
    The composition
    \[
    \lambda_\frp \circ \rec_\infty\colon \mathbf{U}_\infty^\cyc \to G_{K(E[p^\infty])}^\ab 
    \]
    is injective. 
\end{proposition}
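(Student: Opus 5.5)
The plan is to factor $\lambda_\frp\circ\rec_\infty$ through global class field theory and reduce injectivity to a statement about global units in the tower $K(E[p^\infty])$.

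\textbf{Step 1: abelian quotients.} Since the elements of $\mathbf{U}_\infty^\cyc$ lie in the inverse limits of the groups $U_{m,n}^\cyc$, it suffices to control the images of $\rec_{m,n}$ in the Galois groups of finite abelian extensions. For each $(m,n)$, put $L_{m,n}:=F_m(\mu_{p^n})$. Let $M_{m,n}$ be the maximal abelian pro-$p$ extension of $L_{m,n}$ unramified outside the places above $\frp$, and let $N_{m,n}\subset M_{m,n}$ be the maximal subextension unramified everywhere. Global class field theory (Artin reciprocity for idèles) gives an exact sequence
\[
\overline{\mathcal{E}_{m,n}}\longrightarrow \prod_{v\mid\frp}U^{\cyc}_{m,n,v}\xrightarrow{\;\lambda\circ\rec\;}\gal(M_{m,n}/N_{m,n})\longrightarrow 0,
\]
where $\overline{\mathcal{E}_{m,n}}$ is the $p$-adic closure of the image of the global principal units (units congruent to $1$ at the places above $\frp$). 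Here $\lambda\circ\rec$ is exactly the global map induced by the local inclusions. Passing to the inverse limit over $(m,n)$ is legitimate because all groups involved are compact, so $\varprojlim$ is exact. This shows that the kernel of $\lambda_\frp\circ\rec_\infty$ restricted to $G^{\ab}_{K(E[p^\infty])}$ equals $\varprojlim_{m,n}\overline{\mathcal{E}_{m,n}}$, taken with respect to the norm maps. The passage from the maximal abelian pro-$p$ quotient to the full $G^\ab$ is harmless, since $\mathbf{U}_\infty^\cyc$ is pro-$p$.

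\textbf{Step 2: the limit of global units vanishes.} One then has to show that $\varprojlim_{m,n}\overline{\mathcal{E}_{m,n}}=0$, which is the main obstacle. The intended argument uses Leopoldt's conjecture, which is known for abelian extensions of imaginary quadratic fields by Brumer's $p$-adic Baker theorem. It gives
\[
\operatorname{rank}_{\mathbb{Z}_p}\overline{\mathcal{E}_{m,n}}=\operatorname{rank}_{\mathbb{Z}}\mathcal{E}_{m,n}=\tfrac12[L_{m,n}:\mathbb{Q}]-1,
\]
whereas $\prod_{v\mid\frp}U_{m,n,v}$ has $\mathbb{Z}_p$-rank $\tfrac12[L_{m,n}:\mathbb{Q}]$. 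Ranks alone do not force the norm limit to vanish. Instead one argues as in the proof that $\varprojlim$ of the closures of global units has $\Lambda$-rank at most that of the global-unit module in the two-variable Iwasawa algebra $\Lambda=\mathbb{Z}_p[[G]]$. Concretely, a norm-compatible system of global units $(u_{m,n})$ has its $(m,n)$-th term divisible by arbitrarily high powers in the $\mathbb{Z}_p$-module, since $u_{m,n}=N(u_{m',n'})$ and the norm from a higher layer acts on $\overline{\mathcal{E}}$ through the augmentation modulo torsion. Using finiteness of class numbers together with Dirichlet's unit theorem, one shows that $\varprojlim\overline{\mathcal{E}_{m,n}}$ is $\Lambda$-torsion-free of rank strictly smaller than $\mathbf{U}_\infty^\cyc$. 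This alone is still not injectivity. So first try a simpler route: $\bigcap_{m,n}N_{\infty/(m,n)}\overline{\mathcal{E}_{m,n}}$ is $\mathbb{Z}_p$-torsion-free of finite rank inside each layer, and a standard Galois-coinvariant argument in the $\mathbb{Z}_p^2$-tower shows that universal norms of global units are trivial modulo roots of unity in a $\mathbb{Z}_p^2$-extension. In the two-variable setting this is where the Leopoldt input enters.

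\textbf{Step 3: roots of unity.} Finally, handle torsion. Roots of unity of $p$-power order lie in the principal units, but the projective system $(\zeta_{p^n})$ under norms has trivial limit, since norms eventually kill them, or rather they are not norm-compatible into a nonzero element. Hence the kernel is $0$ and the composition is injective. If the Leopoldt-based argument in Step 2 proves awkward, the fallback is to restrict to a $\mathbb{Z}_p$-line (the cyclotomic direction), where the corresponding statement is classical, and deduce the general case by a limit argument.
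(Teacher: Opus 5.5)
Your Step~1 computes the kernel of the wrong map, and Step~2 then tries to prove a false statement.

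\textbf{Step 1.} Your exact sequence is correct for the projection of $\prod_{v\mid\frp}U^{\cyc}_{m,n,v}$ onto $\gal(M_{m,n}/L_{m,n})$. But $\lambda_{m,n}\circ\rec_{m,n}$ lands in the full $G^{\ab}_{L_{m,n}}$ (or its maximal pro-$p$ quotient), and passing to the quotient unramified outside $\frp$ is not harmless. Take a global unit $\varepsilon$ and the idele equal to $\varepsilon$ at the places above $\frp$ and $1$ elsewhere. This idele is trivial in every ray class group of conductor supported above $\frp$. In general it is not trivial in the ray class group of conductor $\frp^a\mathfrak{m}'$ with $\mathfrak{m}'$ prime to $\frp$, because killing it there requires a global unit congruent to $\varepsilon$ modulo $\frp^a$ and to $1$ modulo $\mathfrak{m}'$.

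In fact the kernel at each finite level is trivial. Suppose $u\in U^{\cyc}_{m,n}$ dies in $G^{\ab}_{L_{m,n}}$. The kernel of the idelic Artin map is the closure of $L_{m,n}^\times\prod_{w\mid\infty}L_{m,n,w}^\times$. Hence $u$ is a limit of global units $\varepsilon_k$ with $\varepsilon_k\to u$ at the places above $\frp$ and $\varepsilon_k\to 1$ at every finite place not above $\frp$. Chevalley's theorem (for odd $p$ and $p$-power exponents the modulus may be taken prime to $\frp$) then gives $\varepsilon_k\in\mathcal{E}_{m,n}^{p^{r_k}}$ with $r_k\to\infty$. Therefore $u\in\bigcap_r(U^{\cyc}_{m,n})^{p^r}=\{1\}$.

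This finite-level injectivity is what the paper takes from \cite[Chap.~III, Corollary~4.16.7]{Gra03}, using only that $U^{\cyc}_{m,n}$ is pro-$p$. Add the observation that an element of $G^{\ab}_{K(E[p^\infty])}$ is trivial once all its restrictions to the $L^{\ab}_{m,n}$ are, and that is the whole proof.

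\textbf{Step 2.} It cannot be carried out, because $\varprojlim_{m,n}\overline{\mathcal{E}_{m,n}}$ is not zero. The paper's own elliptic units $\boldsymbol{\theta}(\mathfrak{a})\in\mathbf{U}^{\E}_\infty\cong\mathbf{U}^{\cyc}_\infty$ are norm-compatible systems of principal parts of global units (Lemma~\ref{lem: algebraicity of theta}~(ii)). They are nonzero: by the proof of Proposition~\ref{prop:L-value}, their measure pushes forward to $12\#\Delta\,(\sigma_{\mathfrak{a}}-\operatorname{N}\mathfrak{a})\mu_{\mathrm{dS}}$. Indeed, in the standard two-variable theory $\mathbf{U}^{\E}_\infty$ modulo the closure of the elliptic units is torsion over the Iwasawa algebra.

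If your Step~1 were correct, $\boldsymbol{\theta}(\mathfrak{a})$ would lie in the kernel and the proposition would be false. No Leopoldt or Brumer input can make these universal norms of global units vanish. Your cyclotomic fallback fails for the same reason, since cyclotomic units give nonzero norm-compatible systems of global units.

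What is missing is not a vanishing theorem for $\varprojlim\overline{\mathcal{E}_{m,n}}$. It is the correct identification of the kernel in $G^{\ab}$, which is already trivial at every finite level.
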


\begin{proof}
    For each $m, n \ge 0$, let $\lambda_{m,n}\colon\prod_{v\mid\frp} G_{F_{m,v}(\mu_{p^n})}^\ab \to G_{F_m(\mu_{p^n})}^\ab$
    be the homomorphism induced by the inclusions of the corresponding
    decomposition subgroups.
    Since $U_{m,n}^\cyc$ is a pro-$p$ group, 
    \cite[Chap.~III, Corollary~4.16.7]{Gra03} implies that 
    $\lambda_{m,n} \circ\rec_{m,n}\colon U_{m,n}^\cyc \hookrightarrow G_{F_m(\mu_{p^n})}^\ab$
    is injective. 
    Passing to the inverse limit, we obtain the desired injectivity.
\end{proof}

Via the isomorphism $\mathbf{U}_\infty^\E\cong\mathbf{U}_\infty^\cyc$
from Section~\ref{sec:notation}, we view $\rec_\infty$ as a map on
$\mathbf{U}_\infty^\E$, and use the same notation:
\begin{equation}\label{eq:reciprocity-map}
    \rec_\infty\colon \mathbf{U}_\infty^\E \hookrightarrow \prod_{v\mid\frp} G_{K(E[p^\infty])_v}^\ab.
\end{equation}

\begin{definition}\label{def:Hilbert_symbol}
    The Hilbert norm residue symbol
    \[
    (\cdot,\cdot)_{m,n}\colon \mathcal{F}_m(\mu_{p^n})^\times \times \mathcal{F}_m(\mu_{p^n})^\times \to \mu_{p^n}
    \]
    is defined by 
    \[
    (a, b)_{m,n} := \prod_{v\mid\frp} (a_v, b_v)_{m,n,v}, \qquad 
    (a_v, b_v)_{m,n,v} := (a_v^{1/p^n})^{\rec_{m,n,v}(b_v)-1}
    \]
    for $a = (a_v)_v$ and $b = (b_v)_v$ in $\mathcal{F}_m(\mu_{p^n})^\times$. 
\end{definition}

We now state Coleman's explicit reciprocity law in a semilocal form.
For $f(S)\in\widehat{\mathcal{O}}[[S]]$, we write $f_v(S)$ for the power series 
obtained by projecting each coefficient onto the component corresponding to the 
place $v$.
Following Coleman \cite{Col81}, we introduce a continuous linear functional 
$\int_n = (\int_{n,v})_v\colon \mathcal{O}_m[[S]] \to \mathcal{O}_m$ by
\[
\int_n f := \frac{1}{p^n}\sum_{\zeta\in\mu_{p^n}} f(\zeta-1) = \left(\frac{1}{p^n}\sum_{\zeta\in\mu_{p^n}} f_v(\zeta - 1)\right)_v. 
\]
\begin{theorem}[\cite{Col81}, \cite{Col89}]\label{thm: Coleman}
    Let $f(S) \in (1 + S\mathcal{O}_m[[S]])^{\mathcal{N}=\phi}$, 
    and let $g_{\boldsymbol{\epsilon}_m}(S)$ be the Coleman 
    power series associated with $\boldsymbol{\epsilon}_m = (\epsilon_{m,n})_n 
    \in \mathbf{U}_m^\cyc$. 
    Then, for every $n\ge1$,
    \[
    (f(\zeta_{p^n}-1), \epsilon_{m,n})_{m,n} = \zeta_{p^n}^{\tr_{\mathcal{F}_m/K_{\frp}}(\int_n \intlog(f)\cdot D\intlog(\phi^{-n}g_{\boldsymbol{\epsilon}_m}))}, 
    \]
    where $\tr_{\mathcal{F}_m/K_\frp}$ denotes the trace map
    \[ 
    \tr_{\mathcal{F}_m/K_\frp}(x) = \sum_{v\mid\frp} \tr_{F_{m,v}/K_\frp}(x_v) \qquad (x = (x_v)_v \in \mathcal{F}_m). 
    \]
\end{theorem}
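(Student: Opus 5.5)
The plan is to reduce the semilocal statement to Coleman's local explicit reciprocity law, applied separately at each place $v$ of $F_m$ above $\frp$. The key observation is that every object in the statement decomposes componentwise along $\mathcal{O}_m=\prod_{v\mid\frp}O_{F_{m,v}}$. By Definition~\ref{def:Hilbert_symbol} the left-hand side is a product of local symbols, and by definition $\tr_{\mathcal{F}_m/K_\frp}$ is a sum of local traces. So it suffices to prove, for each $v$,
\[
(f_v(\zeta_{p^n}-1),\epsilon_{m,n,v})_{m,n,v}=\zeta_{p^n}^{\tr_{F_{m,v}/K_\frp}\left(\int_{n,v}\intlog(f)_v\cdot D\intlog(\phi^{-n}g_{\boldsymbol{\epsilon}_m})_v\right)}.
\]

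The first step is to check that the operators $\phi$, $\cnr$, $\intlog$, $D$ and $\int_n$ are compatible with the projection to the $v$-component. Since $F_m=K(E[\bar{\frp}^m])$ is abelian over $K$ and unramified at $\frp$, the Artin symbol of $\frp$ lies in the decomposition group of every place $v\mid\frp$. Hence $\phi$ preserves each factor $O_{F_{m,v}}$ and acts on it as the arithmetic Frobenius of the unramified extension $F_{m,v}/K_\frp\cong\mathbb{Q}_p$. Consequently $\cnr$, defined by a product over $\mu_p$, and $\intlog$, defined via $\phi$, both act componentwise. The condition $\cnr f=\phi f$ then says that each $f_v$ satisfies Coleman's norm condition over $F_{m,v}$. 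Similarly, $g_{\boldsymbol{\epsilon}_m,v}$ is the local Coleman power series of $(\epsilon_{m,n,v})_n\in\varprojlim_n U^1(F_{m,v}(\mu_{p^n}))$, by the uniqueness in the definition; here we use that $\phi^{-n}$ acts as the local Frobenius power. The twist $\phi^{-n}g$ is applied to the coefficients, and the same holds for the functionals $\int_{n,v}$.

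The second step is to invoke Coleman's theorem \cite{Col81}, in the corrected form of \cite{Col89}, for the unramified base $F_{m,v}$ and the tower $F_{m,v}(\mu_{p^n})$. For $f_v\in(1+SO_{F_{m,v}}[[S]])^{\cnr=\phi}$ and a norm-compatible unit system with Coleman series $g_v$, this gives
\[
(f_v(\zeta_{p^n}-1),\epsilon_{m,n,v})=\zeta_{p^n}^{\tr_{F_{m,v}/\mathbb{Q}_p}\left(\int_n\intlog(f_v)\,D\intlog(\phi^{-n}g_v)\right)}.
\]
Taking the product over $v$ then yields the stated formula. Note that $\tr_{F_{m,v}/K_\frp}=\tr_{F_{m,v}/\mathbb{Q}_p}$ under $K_\frp\cong\mathbb{Q}_p$.

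The main obstacle I expect is matching normalizations rather than any new mathematics. Specifically, the following conventions must agree with Coleman's:
\begin{enumerate}
\item the order of the arguments and the sign convention in the Hilbert symbol $(a^{1/p^n})^{\rec(b)-1}$;
\item the convention $f^{\phi^{-n}}(\zeta_{p^n}-1)=\epsilon_{m,n}$ for Coleman series, as opposed to Coleman's original indexing with $\phi^{n}$;
\item the normalization of the reciprocity map (arithmetic versus geometric Frobenius);
\item the fact that $\zeta_{p^n}$ is the specific compatible system fixed in Section~\ref{sec:notation}.
\end{enumerate}
I would first rewrite Coleman's statement in de Shalit's conventions \cite[I.4]{dS87}, which match the Coleman power series normalization used here, and then verify the sign on the simplest test case $f=1+S$, where $f(\zeta_{p^n}-1)=\zeta_{p^n}$. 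In that case the symbol is given by Artin--Hasse-type formulas, and comparing both sides fixes all remaining conventions.
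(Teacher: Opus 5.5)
Your proposal is correct and is essentially the paper's proof. Like the paper, you decompose over the places $v\mid\frp$, apply Coleman's local explicit reciprocity law at each $v$, and take the product. You additionally spell out why $\phi$, $\cnr$, $\intlog$ and the Coleman series act componentwise, which the paper leaves implicit. The paper takes the local law in the form of \cite[Theorem~3.7]{NSW17a}, which is already stated in the present normalization, so no separate convention check is needed.

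One caveat: your proposed calibration with $f=1+S$ cannot fix any convention. Since $\intlog(1+S)=0$ and $\zeta_{p^n}$ pairs trivially with the universal norm $\epsilon_{m,n}$, both sides equal $1$ under any sign or ordering choice.
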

\begin{proof}
    For each $n\ge 0$, write 
    $\epsilon_{m,n}=(\epsilon_{m,n,v})_v\in\prod_{v\mid\frp}U_{F_{m,v}(\mu_{p^n})}$, 
    and for each $v\mid\frp$, set $\boldsymbol{\epsilon}_v:=(\epsilon_{m,n,v})_n
    \in\varprojlim_n U_{F_{m,v}(\mu_{p^n})}$.
    Applying Coleman's explicit reciprocity law in the form given in
    \cite[Theorem~3.7]{NSW17a}, we obtain
    \[
    (f_v(\zeta_{p^n}-1), \epsilon_{m,n,v})_{m,n,v} = 
    \zeta_{p^n}^{\tr_{F_{m,v}/K_\frp}\left(\int_{n,v} \intlog(f_v)\cdot D\intlog(\phi^{-n}g_{\boldsymbol{\epsilon}_v})\right)}. 
    \]
    Taking the product over all places above $\frp$ gives the desired equality.
\end{proof}

\subsection{Coleman power series in the CM elliptic setting}

In this subsection, we review the Coleman power series attached to $\widehat{E}$ and their basic properties. 
Using these power series, we define the two-variable Coates--Wiles homomorphisms and recall de Shalit's two-variable $p$-adic $L$-function.

\begin{proposition}[{\cite[II.4.5]{dS87}}]\label{prop: CPS a.w. ell}
    For each norm-compatible system $\boldsymbol{\beta}_m = (\beta_{m,n})_n \in \mathbf{U}_m^\E$, there exists a unique power series $g_{\boldsymbol{\beta}_m}(T) \in \mathcal{O}_m[[T]]^\times$ satisfying the interpolation property:
    \[
    g_{\boldsymbol{\beta}_m}^{\phi^{-n}}(\omega_n) = \beta_{m,n}
    \]
    for all $n \ge 1$. 
    The following properties hold:
    \begin{enumerate}
        \item $\beta_{m,0} = g_{\boldsymbol{\beta}_m}^{1-\phi^{-1}}(0)$. 
        \item $g_{\boldsymbol{\beta}_m}^\phi \circ [\pi]_{\widehat{E}}(T) = \prod_{\omega\in \widehat{E}[\frp]} g_{\boldsymbol{\beta}_m}(\omega +_{\widehat{E}} T)$. 
        \item For any $\sigma \in G$, $g_{\sigma(\boldsymbol{\beta}_m)}(T) = g_{\boldsymbol{\beta}_m}^\sigma\circ[\kappa_1(\sigma)]_{\widehat{E}}(T)$. 
    \end{enumerate}
\end{proposition}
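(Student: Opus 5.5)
The plan is to reduce to a single place and then run Coleman's classical construction for the relative Lubin--Tate group $\widehat{E}$.

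\textbf{Reduction to one place.} Since $\phi$ lifts the Artin symbol at $\frp$ and $G$ is abelian, $\phi$ lies in the decomposition group of every place $v\mid\frp$ of $F_m$. Hence $\phi$ preserves each factor $F_{m,v}$ of $\mathcal{F}_m$ and acts on it as the arithmetic Frobenius of the unramified extension $F_{m,v}/K_\frp$; this extension is unramified because $F_m=K(E[\bar{\frp}^m])$ is unramified at $\frp$. All statements are therefore componentwise, and I work with one $v$. Over $O_{F_{m,v}}$, $\widehat{E}$ is a height one relative Lubin--Tate group. Its $\frp^n$-torsion generates the totally ramified extension $K_{m,n,v}/F_{m,v}$, and $\omega_n=T(\omega_{\frp,n})$ is a compatible system of generators of its torsion. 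First I check that the normalization of the $\omega_{\frp,n}$ forces $[\pi]_{\widehat{E}}(\omega_{n+1})=\omega_n^{\phi}$ up to the Frobenius twist in de Shalit's convention. This is the relation that makes the twisted interpolation $g^{\phi^{-n}}(\omega_n)$ the right one.

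\textbf{Existence and uniqueness.} This follows Coleman's argument as in \cite{Col79} and \cite[I.2]{dS87}.
\begin{itemize}
\item \emph{Uniqueness.} If two such series agree at all $\omega_n$ (after twisting), their difference vanishes at the points $\omega_n^{\phi^{n}}$. It then vanishes at infinitely many points of the open unit disc, since applying Galois conjugates over $F_{m,v}$ gives all primitive $\frp^n$-torsion. By Weierstrass preparation the difference is zero.
\item \emph{Existence.} Introduce the norm operator $\cnr_{\widehat{E}}$, characterized by $(\cnr_{\widehat{E}}g)\circ[\pi]_{\widehat{E}}=\prod_{\omega\in\widehat{E}[\frp]}g(T+_{\widehat{E}}\omega)$. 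For each $n$, choose $h_n\in O[[T]]^\times$ with $h_n^{\phi^{-n}}(\omega_n)=\beta_{m,n}$; this is possible since $\omega_n$ is a uniformizer of $K_{m,n,v}$.
\item Coleman's key estimate says that $\cnr_{\widehat{E}}^k h\equiv \cnr_{\widehat{E}}^{k-1}h^{\phi}$ modulo increasing powers of $p$.
\item Combined with norm compatibility and compactness of $O[[T]]$, a subsequence of $(\phi^{-k}\cnr_{\widehat{E}}^{k}h_{n+k})$ converges to the desired $g$. The limit satisfies $\cnr_{\widehat{E}}g=g^{\phi}$ and the interpolation property for all $n$.
\end{itemize}
I expect this existence step, specifically the congruence for $\cnr_{\widehat{E}}$, to be the main technical point. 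I would quote it from de Shalit rather than reprove it.

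\textbf{The listed properties.} Each follows from uniqueness.
\begin{enumerate}
\item[(ii)] Both sides of (ii), evaluated at the twisted points $\omega_{n+1}$, give $N_{K_{m,n+1,v}/K_{m,n,v}}\beta_{m,n+1}=\beta_{m,n}$. This uses that the conjugates of $\omega_{n+1}$ over $K_{m,n,v}$ are $\omega_{n+1}+_{\widehat{E}}\omega$ with $\omega\in\widehat{E}[\frp]$. Hence $\cnr_{\widehat{E}}g=g^{\phi}$ by uniqueness, which is (ii).
\item[(i)] Evaluate (ii) at $T=0$ after applying $\phi^{-1}$. This gives $g(0)^{\phi^{-1}}\prod_{0\neq\omega\in\widehat{E}[\frp]}g^{\phi^{-1}}(\omega)=g(0)$. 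The product over $\omega\neq0$ is $N_{K_{m,1,v}/F_{m,v}}\beta_{m,1}=\beta_{m,0}$, so $\beta_{m,0}=g^{1-\phi^{-1}}(0)$.
\item[(iii)] For $\sigma\in G$, we have $\sigma(\omega_n)=[\kappa_1(\sigma)]_{\widehat{E}}(\omega_n)$, and $\sigma$ commutes with $\phi$. Therefore
\[
\bigl(g^{\sigma}\circ[\kappa_1(\sigma)]_{\widehat{E}}\bigr)^{\phi^{-n}}(\omega_n)=\sigma\bigl(g^{\phi^{-n}}(\omega_n)\bigr)=\sigma(\beta_{m,n}).
\]
Uniqueness then gives (iii).
\end{enumerate}
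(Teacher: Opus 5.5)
Your route is essentially what the paper's citation of \cite[II.4.5]{dS87} stands for: Coleman's construction for the height-one Lubin--Tate group $\widehat{E}$ over the unramified semilocal base $\mathcal{O}_m$, with (i)--(iii) then deduced from uniqueness. Those deductions are correct. One small point on (iii): it is not componentwise, since $\sigma\in G$ permutes the places $v\mid\frp$, but your computation works verbatim in the semilocal rings.

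The step to fix is the compatibility you defer. Since $\kappa_1(\phi)=1$, $\phi$ fixes every $\omega_n$, so there is no Frobenius twist on the points; the twist in $g^{\phi^{-n}}(\omega_n)$ acts only on coefficients. What your existence step and your proof of (ii) actually need is the exact relation $[\pi]_{\widehat{E}}(\omega_{n+1})=\omega_n$. For instance, $(g^{\phi}\circ[\pi]_{\widehat{E}})^{\phi^{-(n+1)}}(\omega_{n+1})=g^{\phi^{-n}}([\pi]_{\widehat{E}}(\omega_{n+1}))$ has to equal $\beta_{m,n}$.

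This relation is not forced by the description of $\omega_{\frp,n}$ through $\xi(\Omega/p^n;L)$. Read as the internal splitting $E[p^n]=E[\frp^n]\oplus E[\bar{\frp}^n]$, which commutes with $[p]$, that description gives $[p]\omega_{\frp,n+1}=\omega_{\frp,n}$. Equivalently, $[\pi]_{\widehat{E}}(\omega_{n+1})=[\bar{\pi}^{-1}]_{\widehat{E}}(\omega_n)$, with $\bar{\pi}\in O_{\frp}^\times$. In that case (ii) together with the interpolation property would give $\beta_{m,n}=N_{K_{m,n+1,v}/K_{m,n,v}}(\beta_{m,n+1})=\tau(\beta_{m,n})$, where $\tau\in\gal(K_{m,n}/F_m)$ acts on $E[\frp^n]$ by $\bar{\pi}^{-1}$. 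That fails for general $\boldsymbol{\beta}_m$.

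The $[\pi]$-compatibility is de Shalit's convention, and in the paper it is what \eqref{eq:normalization} encodes. By Lemma~\ref{lem: norm relation for elliptic Coleman series}(i), $[\pi]_{\widehat{E}}(\omega_{n+1})=\bigl([\pi]_{\widehat{E}}\circ\eta^{\phi^{-1}}\bigr)^{\phi^{-n}}(\zeta_{p^{n+1}}-1)=\eta^{\phi^{-n}}(\zeta_{p^n}-1)=\omega_n$. You should invoke it as that input, not as something the choice of $\omega_{\frp,n}$ forces ``up to a twist''. With that in place, your argument is complete.
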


By \cite[II.4.3]{dS87}, there exists an isomorphism of formal groups
\begin{equation}\label{eq:eta}
    \eta\colon \widehat{\mathbb{G}}_m \xrightarrow{\sim} \widehat{E},
    \qquad T=\eta(S)=\Omega_pS+\cdots \in \widehat{\mathcal{O}}[[S]],
\end{equation}
defined over $\widehat{\mathcal{O}}$ and satisfying
\begin{equation}\label{eq:eta_compatibility}
    [\kappa_1(\sigma)]_{\widehat{E}} \circ \eta = \eta^\sigma \circ [\chi_{\cyc}(\sigma)]_{\widehat{\mathbb{G}}_m} \qquad (\sigma\in G).
\end{equation}
Such an isomorphism $\eta$ is unique up to right composition with
$[a]_{\widehat{\mathbb{G}}_m}$ for some $a\in O_\frp^\times$.
Henceforth, we fix $\eta$, and hence $\Omega_p$, by imposing the normalization (see {\cite[II.4.4]{dS87}})
\begin{equation}\label{eq:normalization}
    \omega_n=\eta^{\phi^{-n}}(\zeta_{p^n}-1) \qquad (n\ge1).
\end{equation}
In what follows, we also write $\Omega_p$ for $\Omega_p^*\in \mathbb{C}_p$.

\begin{lemma}\label{lem: norm relation for elliptic Coleman series}
Let $m\ge 1$ and $\boldsymbol{\beta}_m\in \mathbf{U}_m^\E$. 
Then the following hold.
\begin{enumerate}
    \item $[\pi]_{\widehat{E}}\circ \eta^{\phi^{-1}} = \eta\circ [p]_{\widehat{\mathbb{G}}_m}$.
    \item $\cnr(g_{\boldsymbol{\beta}_m}\circ\eta) = (g_{\boldsymbol{\beta}_m}\circ\eta)^\phi$.
\end{enumerate}
\end{lemma}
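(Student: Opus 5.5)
For (i), I will compare both sides through their compositions with $\eta$, and then invoke the uniqueness of $\eta$ up to automorphisms of $\widehat{\mathbb{G}}_m$. Both $[\pi]_{\widehat{E}}\circ\eta^{\phi^{-1}}$ and $\eta\circ[p]_{\widehat{\mathbb{G}}_m}$ are homomorphisms of formal groups $\widehat{\mathbb{G}}_m\to\widehat{E}$ over $\widehat{\mathcal{O}}$. The plan is as follows.

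First, apply \eqref{eq:eta_compatibility} with $\sigma=\phi^{-1}$. Since $\kappa_1(\phi)=1$ and $\chi_\cyc(\phi)=\kappa_1\kappa_2(\phi)=\pi$, this gives $\eta=\eta^{\phi^{-1}}\circ[\pi^{-1}]_{\widehat{\mathbb{G}}_m}$, that is, $\eta^{\phi^{-1}}=\eta\circ[\pi]_{\widehat{\mathbb{G}}_m}$. Here $\pi$ is viewed in $O_\frp\cong\mathbb{Z}_p$, so that $[\pi]_{\widehat{\mathbb{G}}_m}=(1+S)^\pi-1$ makes sense.

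Next, $\eta$ is an isomorphism of formal groups. It therefore intertwines the $\mathbb{Z}_p$-module structures, giving $\eta\circ[a]_{\widehat{\mathbb{G}}_m}=[a]_{\widehat{E}}\circ\eta$ for all $a\in\mathbb{Z}_p$. In particular, $[\pi]_{\widehat{E}}\circ\eta^{\phi^{-1}}=[\pi]_{\widehat{E}}\circ\eta\circ[\pi]_{\widehat{\mathbb{G}}_m}$.

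Finally, I must identify $[\pi]_{\widehat{E}}\circ\eta\circ[\pi]_{\widehat{\mathbb{G}}_m}$ with $\eta\circ[p]_{\widehat{\mathbb{G}}_m}=\eta\circ[\pi\bar\pi]_{\widehat{\mathbb{G}}_m}$. This step needs care, because there are two distinct meanings of "$\pi$". The element $\pi\in O_K$ acts on $\widehat{E}$ by the endomorphism $[\pi]_E$, whose image in $O_\frp\cong\mathbb{Z}_p$ is $\pi$ itself. On the other hand, $\chi_\cyc(\phi)=\kappa_1(\phi)\kappa_2(\phi)$, and $\kappa_2(\phi)=\pi$ is read in $O_{\bar\frp}$, which corresponds to $\bar\pi\in O_\frp$.

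So the first thing to do is to recheck the identifications $O_\frp\cong\mathbb{Z}_p\cong O_{\bar\frp}$. I will use complex conjugation to verify that, read in $\mathbb{Z}_p$, $\chi_\cyc(\phi)=\bar\pi$ (equivalently $p/\pi$). With that, the first step gives $\eta^{\phi^{-1}}=\eta\circ[\bar\pi]_{\widehat{\mathbb{G}}_m}$, and then $[\pi]_{\widehat{E}}\circ\eta\circ[\bar\pi]_{\widehat{\mathbb{G}}_m}=\eta\circ[\pi\bar\pi]_{\widehat{\mathbb{G}}_m}=\eta\circ[p]_{\widehat{\mathbb{G}}_m}$. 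This bookkeeping of which copy of $\mathbb{Z}_p$ each scalar lives in is the main point to get right.

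As a sanity check, evaluate at $\zeta_{p^{n+1}}-1$ after applying $\phi^{-n}$. Using \eqref{eq:normalization}, this should give $[\pi]_{\widehat{E}}(\omega_{n+1})=\omega_n$, which is consistent with $\omega_{\frp,n}$ forming a compatible system.

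For (ii), set $h:=g_{\boldsymbol{\beta}_m}\circ\eta$. By the defining property of $\cnr$, it suffices to show $(h^\phi)\circ[p]_{\widehat{\mathbb{G}}_m}(S)=\prod_{\xi\in\mu_p}h(\xi(1+S)-1)$. For the left side, part (i) and the fact that $\phi$ commutes with $\phi^{-1}$ give
\[
h^\phi\circ[p]_{\widehat{\mathbb{G}}_m}=g^\phi_{\boldsymbol{\beta}_m}\circ\eta^\phi\circ[p]_{\widehat{\mathbb{G}}_m}=g_{\boldsymbol{\beta}_m}^\phi\circ[\pi]_{\widehat{E}}\circ\eta,
\]
where I apply $\phi$ to (i). By Proposition~\ref{prop: CPS a.w. ell}(ii), this equals $\prod_{\omega\in\widehat{E}[\frp]}g_{\boldsymbol{\beta}_m}(\omega+_{\widehat{E}}\eta(S))$.

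For the right side, $\xi(1+S)-1=(\xi-1)+_{\widehat{\mathbb{G}}_m}S$, so $h(\xi(1+S)-1)=g_{\boldsymbol{\beta}_m}(\eta(\xi-1)+_{\widehat{E}}\eta(S))$. It then remains to see that $\eta$ maps $\widehat{\mathbb{G}}_m[p]=\{\xi-1\}$ bijectively onto $\widehat{E}[\frp]$. This holds because $\eta$ is an isomorphism of $\mathbb{Z}_p$-modules, and $p$-torsion of the height-one group $\widehat{E}$ is exactly $\frp$-torsion (since $[\bar\pi]_{\widehat{E}}$ is an automorphism). Alternatively, one can read it off directly from \eqref{eq:normalization}. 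The two products then agree.
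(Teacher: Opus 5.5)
Your proof is correct. For (ii) it is the paper's argument up to cosmetics: the paper computes $\cnr\bigl((g_{\boldsymbol{\beta}_m}\circ\eta)^{\phi^{-1}}\bigr)$ and applies $\phi^{-1}$ to Proposition~\ref{prop: CPS a.w. ell}~(ii), while you apply $\phi$ to (i) and work with $g_{\boldsymbol{\beta}_m}\circ\eta$ directly.

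For (i) you take a genuinely different route. The paper evaluates both homomorphisms at torsion points using the normalization \eqref{eq:normalization}. After twisting by $\phi^{-n}$ and evaluating at $\zeta_{p^{n+1}}-1$, the identity becomes $[\pi]_{\widehat{E}}(\omega_{n+1})=\omega_n$, and such values determine a homomorphism $\widehat{\mathbb{G}}_m\to\widehat{E}$. You instead use only the Galois compatibility \eqref{eq:eta_compatibility} at $\sigma=\phi^{-1}$, which gives $\eta^{\phi^{-1}}=\eta\circ[\chi_{\cyc}(\phi)]_{\widehat{\mathbb{G}}_m}$, together with the $\mathbb{Z}_p$-linearity of $\eta$.

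The one delicate input is the one you isolate. $\chi_{\cyc}(\phi)=\kappa_2(\phi)$ is $\pi$ viewed in $O_{\bar\frp}$, which is $\bar\pi$ viewed in $O_{\frp}$, because the $\bar\frp$-adic embedding of $K$ is the $\frp$-adic one precomposed with complex conjugation. This is exactly what gives $[\pi]_{\widehat{E}}\circ\eta\circ[\bar\pi]_{\widehat{\mathbb{G}}_m}=\eta\circ[p]_{\widehat{\mathbb{G}}_m}$. The paper never spells this out and elsewhere simply writes $\chi_{\cyc}(\phi)=\pi$.

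What your route buys is independence from the transition maps of the torsion points. As your sanity check shows, (i) together with \eqref{eq:normalization} forces $[\pi]_{\widehat{E}}(\omega_{n+1})=\omega_n$, which is de Shalit's convention. However, the $\omega_{\frp,n}$ defined in Section~\ref{sec:notation} from $\xi(\Omega/p^n;L)$ satisfy $[p]_E(\omega_{\frp,n+1})=\omega_{\frp,n}$. So your remark that the check is ``consistent with $\omega_{\frp,n}$ forming a compatible system'' holds only for the $[\pi]$-compatible normalization, which the paper's torsion-point proof tacitly relies on. Your derivation of (i) never uses this and is unaffected, so you can drop the sanity check.
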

\begin{proof}
    The identity (i) follows from the normalization \eqref{eq:normalization} by
    comparing the values of the two homomorphisms at the torsion points
    $\zeta_{p^n}-1$ for $n\ge 1$.
    For (ii), using the definition of $\cnr$ and (i), we have
    \[
    ([p]_{\widehat{\mathbb{G}}_m}\circ \cnr)((g_{\boldsymbol{\beta}_m}\circ\eta)^{\phi^{-1}})(S)
    =
    \prod_{\omega\in\widehat{E}[\frp]}
    g_{\boldsymbol{\beta}_m}^{\phi^{-1}}
    \bigl(\omega+_{\widehat{E}}\eta^{\phi^{-1}}(S)\bigr).
    \]
    Applying $\phi^{-1}$ to Proposition~\ref{prop: CPS a.w. ell} (ii), and using
    (i) again, we obtain
    \[
    ([p]_{\widehat{\mathbb{G}}_m}\circ \cnr)((g_{\boldsymbol{\beta}_m}\circ\eta)^{\phi^{-1}})
    =
    [p]_{\widehat{\mathbb{G}}_m}(g_{\boldsymbol{\beta}_m}\circ\eta).
    \]
    Since $[p]_{\widehat{\mathbb{G}}_m}$ is injective on
    $\widehat{\mathcal{O}}[[S]]$, the desired equality follows.
\end{proof}

For each integer $m\ge 1$, set
\[
G_m := \gal(F_m(E[\frp^\infty])/K), \qquad H_m := \gal(F_m(E[\frp^\infty])/F_m).
\]
\begin{proposition}[{\cite[II.4.6]{dS87}}]\label{prop: measure existence}
    For each \(m\ge 1\), there exists a unique $G_m$-equivariant homomorphism
    $i_m:\mathbf{U}_m^\E \to \Lambda(G_m,\widehat{\mathcal O}), 
    \;\boldsymbol{\beta}_m \mapsto \mu_{\boldsymbol{\beta}_m}$
    such that, for every \(\boldsymbol{\beta}_m\in \mathbf{U}_m^\E\), 
    \[
    \intlog(g_{\boldsymbol{\beta}_m}\circ\eta)(S)
    =
    \int_{H_m}
    (1+S)^{\kappa_1(\sigma)}
    \,d\mu_{\boldsymbol{\beta}_m}(\sigma),
    \]
    where $\kappa_1$ restricts to an isomorphism
    $\kappa_1\colon H_m\xrightarrow{\sim}\mathbb{Z}_p^\times$.
    The $G_m$-equivariance of $i_m$ is given by $\mu_{\tau(\boldsymbol{\beta}_m)}(\;\cdot\;) = \mu_{\boldsymbol{\beta}_m}(\tau^{-1}\;\cdot\;)$ for any $\tau \in G_m$. 
\end{proposition}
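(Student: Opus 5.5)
The plan is to reduce the statement to the one-variable theory of measures for the formal group $\widehat{\mathbb{G}}_m$, transported through the isomorphism $\eta$. Fix $m\ge1$ and $\boldsymbol{\beta}_m\in\mathbf{U}_m^\E$.

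First, the power series $h:=g_{\boldsymbol{\beta}_m}\circ\eta$ lies in $\widehat{\mathcal{O}}[[S]]^\times$. By Lemma~\ref{lem: norm relation for elliptic Coleman series}~(ii) it satisfies $\cnr(h)=h^\phi$. Hence, by \eqref{eq:integral_log}, we get $\intlog(h)\in\widehat{\mathcal{O}}[[S]]^{\ctr=0}$. Under the Amice transform \eqref{eq:amice_transform}, the condition $\ctr=0$ means that the associated measure $\mu_{\intlog(h)}$ on $\mathbb{Z}_p$ is supported on $\mathbb{Z}_p^\times$; this is the standard fact that $\ctr$ corresponds to restriction to $p\mathbb{Z}_p$, applied coefficientwise over $\widehat{\mathcal{O}}$. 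Since $\kappa_1\colon H_m\to\mathbb{Z}_p^\times$ is an isomorphism, pulling back along it yields a unique measure $\nu$ on $H_m$ with $\intlog(h)(S)=\int_{H_m}(1+S)^{\kappa_1(\sigma)}\,d\nu(\sigma)$.

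Next, I extend $\nu$ from $H_m$ to $G_m$, since $\Lambda(G_m,\widehat{\mathcal O})$ is required. Choose coset representatives of $G_m/H_m\cong\gal(F_m/K)$, which is finite. For each representative $\tau$, apply the same construction to $\tau^{-1}(\boldsymbol{\beta}_m)$ and translate the resulting measure to the coset $\tau H_m$; summing over cosets defines $\mu_{\boldsymbol{\beta}_m}$. Uniqueness holds because a measure on $H_m$ is determined by its Amice transform. The integral identity in the statement only involves the restriction to $H_m$, so it is automatic for the trivial coset.

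Then I verify $G_m$-equivariance. By Proposition~\ref{prop: CPS a.w. ell}~(iii) and \eqref{eq:eta_compatibility},
\[
g_{\sigma\boldsymbol{\beta}_m}\circ\eta=g^\sigma_{\boldsymbol{\beta}_m}\circ[\kappa_1(\sigma)]_{\widehat E}\circ\eta=(g_{\boldsymbol{\beta}_m}\circ\eta)^\sigma\circ[\chi_\cyc(\sigma)]_{\widehat{\mathbb{G}}_m}=\sigma\cdot h .
\]
It remains to check that $\intlog$ commutes with this action. This holds because $\phi$ commutes with $\sigma$, as $\mathcal{G}$ is abelian on the relevant coefficients, and the action respects $[p]$. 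Substituting $[\chi_\cyc(\sigma)]$ into $(1+S)^{\kappa_1(\tau)}$ then gives $(1+S)^{\chi_\cyc(\sigma)\kappa_1(\tau)}$. Combined with the action of $\sigma$ on the coefficients, this should translate exactly into $\mu(\sigma^{-1}\cdot)$, once the coset bookkeeping from the extension step is used consistently.

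The main obstacle is exactly this compatibility. On $H_m$ one has $\chi_\cyc=\kappa_1$ only up to the factor $\kappa_2$, which is trivial on $H_m$. Checking that the coefficient action combined with the $\kappa_1$-shift reproduces left translation on all of $G_m$, rather than merely on $H_m$, is where sign and inverse conventions must be tracked carefully. Homomorphy of $i_m$ and continuity are routine, since $\intlog$ is a homomorphism and the Amice transform is linear.
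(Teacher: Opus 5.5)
Your overall route is correct and is the standard construction, which the paper quotes from de Shalit and uses via cosets in Lemma~\ref{lem: approximation}. Since $\ctr(\intlog(g_\gamma\circ\eta))=0$, the Amice transform and $\kappa_1\colon H_m\cong\mathbb{Z}_p^\times$ give a measure $\nu_\gamma$ on $H_m$ for each $\gamma\in\mathbf{U}_m^\E$. One then sets $\mu_{\boldsymbol{\beta}_m}(\tau U):=\nu_{\tau^{-1}\boldsymbol{\beta}_m}(U)$ for $U\subset H_m$.

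\textbf{The gap.} The one step beyond the one-variable theory is left unproved, and your stated reason for it is wrong. What you need is $H_m$-equivariance: $\nu_{\sigma\gamma}(U)=\nu_\gamma(\sigma^{-1}U)$ for $\sigma\in H_m$. Without it:
\begin{itemize}
\item the coset transport may depend on the chosen representatives, which you never check;
\item $G_m$-equivariance does not follow;
\item uniqueness is not proved, since Amice injectivity only fixes $\mu_{\boldsymbol{\beta}_m}$ on $H_m$, and equivariance is what fixes it on the other cosets.
\end{itemize}
Your claim that $\kappa_2$ is trivial on $H_m$, so $\chi_\cyc=\kappa_1$ there, is false in the sense your computation needs. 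Because $\eta$ has coefficients in $\widehat{\mathcal O}$, your (correct) identity $g_{\sigma\gamma}\circ\eta=\sigma\cdot h$ requires a lift $\sigma\in G$. For a lift of an element of $H_m$ one only has $\kappa_2(\sigma)\equiv1\pmod{p^m}$; indeed $\kappa_2$ does not factor through $G_m$. At the same time, such a $\sigma$ acts nontrivially on the coefficients of $\eta$, e.g.\ $\sigma(\Omega_p)=\kappa_2(\sigma)^{-1}\Omega_p$. Neither effect vanishes. They cancel, and that cancellation is the missing argument.

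\textbf{The fix.} Since $\eta$ is a homomorphism of formal groups, it commutes with $[a]$ for $a\in\mathbb{Z}_p$. So Proposition~\ref{prop: CPS a.w. ell}~(iii) and \eqref{eq:eta_compatibility} give, for all $\sigma\in G$,
\[
g_{\sigma\gamma}\circ\eta=g_\gamma^{\sigma}\circ[\kappa_1(\sigma)]_{\widehat{E}}\circ\eta=(g_\gamma^{\sigma}\circ\eta)\circ[\kappa_1(\sigma)]_{\widehat{\mathbb{G}}_m},
\]
or equivalently $\eta^\sigma=\eta\circ[\kappa_2(\sigma)^{-1}]_{\widehat{\mathbb{G}}_m}$. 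If $\sigma$ is trivial on $F_m$, then $g_\gamma^\sigma=g_\gamma$, since its coefficients lie in $\mathcal{O}_m$. Hence $g_{\sigma\gamma}\circ\eta=(g_\gamma\circ\eta)\circ[\kappa_1(\sigma)]_{\widehat{\mathbb{G}}_m}$. Two further facts finish it:
\begin{itemize}
\item $\intlog(f\circ[a]_{\widehat{\mathbb{G}}_m})=\intlog(f)\circ[a]_{\widehat{\mathbb{G}}_m}$;
\item precomposing with $[a]_{\widehat{\mathbb{G}}_m}$ pushes the Amice measure forward along $x\mapsto ax$.
\end{itemize}
Together these give $\nu_{\sigma\gamma}(U)=\nu_\gamma(\sigma^{-1}U)$. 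Replacing $\tau$ by $\tau h$ with $h\in H_m$ then gives $\nu_{h^{-1}\tau^{-1}\boldsymbol{\beta}_m}(U)=\nu_{\tau^{-1}\boldsymbol{\beta}_m}(hU)$, so the extension is well defined. $G_m$-equivariance is now formal: $\mu_{\rho\boldsymbol{\beta}_m}(\tau U)=\nu_{\tau^{-1}\rho\boldsymbol{\beta}_m}(U)=\mu_{\boldsymbol{\beta}_m}(\rho^{-1}\tau U)$, and uniqueness follows as well.
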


For integers $m'\ge m\ge 1$, let $\res_{m',m}\colon G_{m'} \twoheadrightarrow G_m$ be the restriction map, 
and denote by $(\res_{m',m})_*\colon \Lambda(G_{m'}, \widehat{\mathcal{O}}) \to \Lambda(G_m,\widehat{\mathcal{O}})$ 
the induced push-forward of measures. 
Also let $\nr_{m',m}\colon \mathbf{U}_{m'}^\E \to \mathbf{U}_m^\E$ be the norm map.
By the norm compatibility of the Coleman power series and the uniqueness 
argument in Proposition~\ref{prop: measure existence}, we have
$(\res_{m',m})_*\circ i_{m'} = i_m\circ\nr_{m',m}$.
Passing to the inverse limit, we obtain a $G$-equivariant homomorphism
\begin{equation}\label{eq:two-variable_measure}
    i_\infty\colon \mathbf{U}_\infty^\E
    \to \Lambda(G,\widehat{\mathcal O}) \cong
    \varprojlim_m\Lambda(G_m,\widehat{\mathcal O}),
    \; (\boldsymbol{\beta}_m)_m \mapsto \mu_{\boldsymbol{\beta}} := (\mu_{\boldsymbol{\beta}_m})_m. 
\end{equation}
With the notation of Section~\ref{sec:notation},
$\mu_{\boldsymbol{\beta}}^*\in\Lambda(G,\mathbb C_p)$ denotes the corresponding $\mathbb{C}_p$-valued measure.

By Lemma~\ref{lem: norm relation for elliptic Coleman series}~(ii)
and \eqref{eq:integral_log}, we have
$\ctr\bigl(\intlog(g_{\boldsymbol{\beta}_m}\circ\eta)\bigr)=0$.
Therefore, Lemma~\ref{lem: fundamental properties of Coleman power ser.} (iii) implies that, for every integer $k\in\mathbb{Z}$, we may define $D^k\intlog(g_{\boldsymbol{\beta}_m}\circ\eta)(0)$.
Moreover, for each $k \ge 1$, we define Kummer's $k$-th logarithmic derivative of $\boldsymbol{\beta}_m \in \mathbf{U}_m^\E$ by 
\[
\delta_k(\boldsymbol{\beta}_m) = D^k(\log g_{\boldsymbol{\beta}_m} \circ \eta)(0). 
\]
We note that $\delta_k(\boldsymbol{\beta}_m) \in \widehat{\mathcal{O}}$ for every $k \ge 1$, since  $(g_{\boldsymbol{\beta}_m}\circ\eta)(S) \in \widehat{\mathcal{O}}[[S]]^\times$ implies that $(g_{\boldsymbol{\beta}_m}\circ\eta)'(S)/(g_{\boldsymbol{\beta}_m}\circ\eta)(S) \in \widehat{\mathcal{O}}[[S]]$. 

\begin{lemma}\label{lem: approximation}
Let \(\boldsymbol{\beta}=(\boldsymbol{\beta}_m)_m\in \mathbf{U}_\infty^\E\)
be a norm-compatible system.
For each pair of integers $(i,j)$, we have
\[
\int_{G} \kappa_1^i\kappa_2^j(\sigma)\,d\mu_{\boldsymbol{\beta}}^*(\sigma) \equiv \sum_{\tau\in \gal(F_m/K)} \kappa_2^{-j}(\tau)\, D^i\intlog(g_{\widetilde{\tau}(\boldsymbol{\beta}_m)}\circ\eta)(0)^* \pmod{p^m}, 
\]
where $\widetilde{\tau}\in G$ denotes a lift of
$\tau$ acting trivially on $E[\frp^\infty]$.
Moreover, when $i \ge 1$, 
\[
\int_G \kappa_1^i\kappa_2^j(\sigma)\,d\mu_{\boldsymbol{\beta}}^*(\sigma) \equiv (1 - \pi^{j-1}\bar{\pi}^{i-1}) \sum_{\tau \in \gal(F_m/K)} \kappa_2^{-j}(\tau) \delta_i(\widetilde{\tau}(\boldsymbol{\beta}_m))^* \pmod{p^m}. 
\]
The right-hand sides of both congruences are independent of the choice of
the lifts $\widetilde{\tau}$.
\end{lemma}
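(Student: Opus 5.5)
We first reduce the integral over $G$ to a finite sum over $G_m$, then evaluate the $H_m$-part of each term with Lemma~\ref{lem: fundamental properties of Coleman power ser.}~(iii). Since $\kappa_1^i\kappa_2^j$ is continuous, it is constant modulo $p^m$ on cosets of $\gal(K(E[p^\infty])/F_m(E[\frp^\infty]))$. Indeed, on this subgroup $\kappa_2\equiv 1 \pmod{p^m}$ and $\kappa_1=1$. Hence
\[
\int_G \kappa_1^i\kappa_2^j\,d\mu_{\boldsymbol{\beta}}^* \equiv \int_{G_m}\kappa_1^i\kappa_2^j\,d\mu_{\boldsymbol{\beta}_m}^* \pmod{p^m},
\]
with $\kappa_2$ taken modulo $p^m$ on $G_m$. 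We then decompose $G_m=\bigsqcup_{\tau\in\gal(F_m/K)}\widetilde{\tau}H_m$, choosing $\widetilde{\tau}$ trivial on $E[\frp^\infty]$. For such lifts $\kappa_1(\widetilde{\tau})=1$, and $\kappa_2$ is constant modulo $p^m$ on each coset $\widetilde{\tau}H_m$, because $\kappa_2|_{H_m}\equiv1$ modulo $p^m$. On a coset we write $\sigma=\widetilde{\tau}h$. The $G_m$-equivariance $\mu_{\widetilde{\tau}^{-1}(\boldsymbol{\beta}_m)}(\cdot)=\mu_{\boldsymbol{\beta}_m}(\widetilde{\tau}\,\cdot)$ from Proposition~\ref{prop: measure existence} then rewrites the coset integral as $\kappa_2^{j}(\widetilde{\tau})\int_{H_m}\kappa_1^i\,d\mu_{\widetilde{\tau}^{-1}(\boldsymbol{\beta}_m)}$. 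Reindexing $\tau\mapsto\tau^{-1}$ produces the factor $\kappa_2^{-j}(\tau)$ together with $\widetilde{\tau}(\boldsymbol{\beta}_m)$. A different choice of lift changes $\widetilde{\tau}$ by an element of $H_m$ that acts trivially on $E[\frp^\infty]$; this element fixes $\boldsymbol{\beta}_m$, since $\boldsymbol{\beta}_m$ lives over $F_m$. This gives the independence of the lifts.

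Next we evaluate $\int_{H_m}\kappa_1^i\,d\mu$. Via $\kappa_1\colon H_m\xrightarrow{\sim}\mathbb{Z}_p^\times$, Proposition~\ref{prop: measure existence} identifies the pushforward of $\mu$ with the Amice measure of $\intlog(g\circ\eta)$, which is supported on $\mathbb{Z}_p^\times$. Lemma~\ref{lem: fundamental properties of Coleman power ser.}~(iii) then gives $\int_{\mathbb{Z}_p^\times}x^i\,d\mu = D^i\intlog(g\circ\eta)(0)$. Applying $(\cdot)^*$ proves the first congruence.

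For the second congruence, with $i\ge1$, we expand
\[
\intlog(g\circ\eta)=\log(g\circ\eta)-\tfrac1p\log\bigl((g\circ\eta)^\phi\circ[p]_{\widehat{\mathbb{G}}_m}\bigr).
\]
Since $D\circ[p]_{\widehat{\mathbb{G}}_m}^*=p\,[p]_{\widehat{\mathbb{G}}_m}^*\circ D$, we get $D^i\intlog(g\circ\eta)(0)=\delta_i(\boldsymbol{\beta}_m)-p^{i-1}\delta_i(\boldsymbol{\beta}_m)^{\phi}$. The constant term disappears because $i\ge1$. It remains to handle $\phi$. Because $\kappa_1(\phi)=1$, Proposition~\ref{prop: CPS a.w. ell}~(iii) gives $g^\phi=g_{\phi(\boldsymbol{\beta}_m)}$, and \eqref{eq:eta_compatibility} gives $\eta^\phi=\eta$. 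Hence $\delta_i(\boldsymbol{\beta}_m)^\phi=\delta_i(\phi\boldsymbol{\beta}_m)$. Inserting this into the sum and reindexing $\tau\mapsto\tau\phi^{-1}$ (where $\phi|_{F_m}$ is the Frobenius) produces the factor $\kappa_2^{-j}(\phi)=\pi^{-j}$, so the sum becomes $\sum_\tau\kappa_2^{-j}(\tau)\delta_i(\widetilde\tau\boldsymbol\beta_m)\,(1-p^{i-1}\pi^{-j})$. Since $p=\pi\bar\pi$, we have $p^{i-1}\pi^{-j}=\pi^{i-1-j}\bar{\pi}^{i-1}$.

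This does not match the stated factor $1-\pi^{j-1}\bar{\pi}^{i-1}$. The discrepancy must come from the convention for the $\phi$-action, or from the fact that $\widetilde{\phi\tau}$ differs from $\phi\widetilde\tau$ by an element affecting the $\kappa_2$ and $\kappa_1$ values. The main obstacle is therefore the bookkeeping of Frobenius: whether $\tau\mapsto\phi\tau$ or $\phi^{-1}\tau$ is the right reindexing, and how the $(\cdot)^*$ projection interacts with $\phi$ on $\widehat{\mathcal O}$. We will check this carefully against the normalization \eqref{eq:normalization}. Using $\phi^{-1}$ would give $1-p^{i-1}\pi^{j}$ up to units, and the stated factor should emerge once signs of exponents are tracked. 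The congruences modulo $p^m$ absorb the finite-level approximations throughout.
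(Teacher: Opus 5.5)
Your argument for the first congruence is correct and follows the paper's route. You pass from $G$ to $G_m$, decompose $G_m$ into cosets of $H_m$ using lifts $\widetilde{\tau}$ trivial on $E[\frp^\infty]$, move the translation onto $\boldsymbol{\beta}_m$ via the equivariance in Proposition~\ref{prop: measure existence}, and read off $D^i\intlog(g\circ\eta)(0)$ using Lemma~\ref{lem: fundamental properties of Coleman power ser.}~(iii). For the independence of lifts, note that two admissible lifts differ by an element fixing $F_m(E[\frp^\infty])$, not merely $F_m$; that is why $\boldsymbol{\beta}_m$ is unaffected. The second congruence, however, is not established. You arrive at the factor $1-p^{i-1}\pi^{-j}$, note that it contradicts the statement, and leave the discrepancy open. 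The cause is neither $\phi$ versus $\phi^{-1}$ nor the projection $(\cdot)^*$; it is two concrete errors in your computation.

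First, $\eta^\phi\neq\eta$. When you applied \eqref{eq:eta_compatibility} with $\sigma=\phi$, you used $\kappa_1(\phi)=1$ but overlooked $\chi_{\cyc}(\phi)=\kappa_1(\phi)\kappa_2(\phi)=\pi$. So the relation reads $\eta=\eta^\phi\circ[\pi]_{\widehat{\mathbb{G}}_m}$, i.e.\ $\eta^\phi=\eta\circ[\pi^{-1}]_{\widehat{\mathbb{G}}_m}$. Since $D(F\circ[a]_{\widehat{\mathbb{G}}_m})=a\,(DF)\circ[a]_{\widehat{\mathbb{G}}_m}$, one has $D^i(F\circ[a]_{\widehat{\mathbb{G}}_m})(0)=a^iD^iF(0)$. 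Combined with $g_{\boldsymbol{\beta}_m}^\phi=g_{\phi(\boldsymbol{\beta}_m)}$, this gives $\phi(\delta_i(\boldsymbol{\beta}_m))=D^i\bigl(\log g_{\phi(\boldsymbol{\beta}_m)}\circ\eta\circ[\pi^{-1}]_{\widehat{\mathbb{G}}_m}\bigr)(0)=\pi^{-i}\delta_i(\phi(\boldsymbol{\beta}_m))$; the factor $\pi^{-i}$ is what you lost.

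Second, your reindexing has the wrong sign. Because $\kappa_1(\phi)=1$, the element $\phi\widetilde{\tau}$ is itself an admissible lift of $\phi|_{F_m}\tau$, so there is no lift ambiguity to worry about. Substituting $\tau'=\phi\tau$ gives $\kappa_2^{-j}(\tau)=\kappa_2^{j}(\phi)\kappa_2^{-j}(\tau')=\pi^{j}\kappa_2^{-j}(\tau')$, not $\pi^{-j}\kappa_2^{-j}(\tau')$.

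With both corrections the subtracted term is $p^{i-1}\pi^{j-i}\sum_{\tau}\kappa_2^{-j}(\tau)\delta_i(\widetilde{\tau}(\boldsymbol{\beta}_m))$. Since $p=\pi\bar{\pi}$, we have $p^{i-1}\pi^{j-i}=\pi^{j-1}\bar{\pi}^{i-1}$, which is exactly the factor in the statement. As it stands, the proposal proves only the first congruence.
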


\begin{proof}
    Using the lifts $\widetilde{\tau}\in G$ chosen above, 
    we have the coset decomposition
    \[
        G_m = \bigsqcup_{\tau\in\gal(F_m/K)} \widetilde{\tau}^{-1}H_m.
    \]
    Using this decomposition and the equivariance property 
    in Proposition~\ref{prop: measure existence}, we obtain
    \[
    \int_{G} \kappa_1^i\kappa_2^j(\sigma)\, d\mu_{\boldsymbol{\beta}}^*(\sigma)
    \equiv \sum_{\tau\in \gal(F_m/K)} \kappa_2^{-j}(\tau) \int_{H_m} 
    \kappa_1^i(\sigma)\, d\mu_{\widetilde{\tau}(\boldsymbol{\beta}_m)}^*(\sigma) 
    \pmod{p^m}.
    \]
    Therefore, the first congruence follows by applying $D^i$ to the left-hand side of the identity in Proposition~\ref{prop: measure existence} and then evaluating at $S=0$.
    For the second congruence, by the definition of the integral logarithm, we have
    \[
    D^i\intlog(g_{\widetilde{\tau}(\boldsymbol{\beta}_m)} \circ \eta)(0) = \delta_i(\widetilde{\tau}(\boldsymbol{\beta}_m)) - p^{i-1}\phi(\delta_i(\widetilde{\tau}(\boldsymbol{\beta}_m))).
    \]
    For the second term on the right-hand side, the identities
    $\chi_\cyc(\phi)=\kappa_2(\phi)=\pi$, together with
    Proposition~\ref{prop: CPS a.w. ell} (iii) and
    \eqref{eq:eta_compatibility}, yield
    $\phi\left(\delta_i(\widetilde{\tau}(\boldsymbol{\beta}_m))\right)
    =\pi^{-i}\delta_i(\phi\widetilde{\tau}(\boldsymbol{\beta}_m))$.
    Therefore, we have
    \[
    \pi^{-i}\sum_{\tau\in \gal(F_m/K)} \kappa_2^{-j}(\tau)\delta_i(\phi\widetilde{\tau}(\boldsymbol{\beta}_m)) = \pi^{j-i} \sum_{\tau\in \gal(F_m/K)} \kappa_2^{-j}(\tau) \delta_i(\widetilde{\tau}(\boldsymbol{\beta}_m)). 
    \]
    Hence, the second congruence holds.
    The independence of the choices of the lifts $\widetilde{\tau}$ follows immediately from the argument above.
\end{proof}

\begin{lemma}
    Let $m\ge 1$ and $\boldsymbol{\beta}_m\in \mathbf{U}_m^\E$. 
    For each pair of positive integers $(i,j)$, we write
    \[
    \delta_{i,j}^{(m)}(\boldsymbol{\beta}_m) := \sum_{\tau\in\gal(F_m/K)} \kappa_2^{-j}(\tau) \delta_i(\widetilde{\tau}(\boldsymbol{\beta}_m)) \in \widehat{\mathcal{O}}/p^m\widehat{\mathcal{O}},
    \]
    where $\widetilde{\tau}\in G$ denotes a lift of
    $\tau$ acting trivially on $E[\frp^\infty]$.
    Then the following properties hold:
    \begin{enumerate}
        \item For every $\sigma \in G$, $\delta_{i,j}^{(m)}(\sigma(\boldsymbol{\beta}_m)) \equiv \kappa_1^i\kappa_2^j(\sigma)\delta_{i,j}^{(m)}(\boldsymbol{\beta}_m) \pmod{p^m}$.
        \item $\Omega_p^{j-i}\delta_{i,j}^{(m)}(\boldsymbol{\beta}_m)^* \in \mathbb{Z}_p/p^m\mathbb{Z}_p$. 
    \end{enumerate}
\end{lemma}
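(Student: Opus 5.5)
The two assertions need different tools. Assertion (i) is a formal manipulation of Coleman power series together with the Galois equivariance of Proposition~\ref{prop: CPS a.w. ell}~(iii) and \eqref{eq:eta_compatibility}. Assertion (ii) is a rationality statement, which I will deduce from (i) by letting $G$ act on the coefficients.

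\emph{Effect of $\sigma$ on a single $\delta_i$.} For $\sigma\in G$, Proposition~\ref{prop: CPS a.w. ell}~(iii) gives
\[
g_{\sigma(\boldsymbol{\beta}_m)}\circ\eta=g_{\boldsymbol{\beta}_m}^\sigma\circ[\kappa_1(\sigma)]_{\widehat{E}}\circ\eta .
\]
By \eqref{eq:eta_compatibility} this equals $(g_{\boldsymbol{\beta}_m}\circ\eta)^\sigma\circ[\chi_\cyc(\sigma)]_{\widehat{\mathbb{G}}_m}$. The operator $D$ satisfies $D(h((1+S)^c-1))=c\,(Dh)((1+S)^c-1)$. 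Taking logarithms, applying $D^i$ and evaluating at $0$ therefore yields
\[
\delta_i(\sigma(\boldsymbol{\beta}_m))=\chi_\cyc(\sigma)^i\,\sigma\bigl(\delta_i(\boldsymbol{\beta}_m)\bigr).
\]
The case $\sigma=\phi$ was already used in the proof of Lemma~\ref{lem: approximation}.

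\emph{Proof of (i).} Apply this formula to $\widetilde{\tau}\sigma$ and re-index. Choose lifts so that $\sigma\widetilde{\tau}=\widetilde{\tau'}\rho$, where $\tau'=\sigma|_{F_m}\tau$ and $\rho\in H_m$-type elements act trivially on $F_m$. The cleanest way is to write
\[
\widetilde{\tau}\sigma=\widetilde{\tau\sigma}\cdot h_{\tau},
\]
with $h_\tau$ acting trivially on $E[\bar{\frp}^m]$, so that $\kappa_2(h_\tau)\equiv1\pmod{p^m}$ and $\kappa_1(h_\tau)=\kappa_1(\sigma)$, because the lifts $\widetilde{\tau}$ act trivially on $E[\frp^\infty]$.

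For $h\in G$ acting trivially on $F_m$, the formula above gives $\delta_i(h\boldsymbol{\beta}_m)=\chi_\cyc(h)^i h(\delta_i(\boldsymbol{\beta}_m))$. Since $\delta_i(\boldsymbol{\beta}_m)$ is built from the coefficients of $g_{\boldsymbol{\beta}_m}\in\mathcal{O}_m[[T]]$ and of $\eta$, I must control how $h$ acts on $\eta$. Combining \eqref{eq:eta_compatibility} with the uniqueness of $\eta$ up to $[a]$ gives $\eta^{h}=\eta\circ[\kappa_1\chi_\cyc^{-1}(h)]$. Hence, for such $h$,
\[
\delta_i(h\boldsymbol{\beta}_m)=\kappa_1(h)^i\delta_i(\boldsymbol{\beta}_m).
\]
Substituting $h=h_\tau$ and re-indexing the sum via $\tau\mapsto\tau\sigma$ produces the factor $\kappa_1^i(\sigma)\kappa_2^{j}(\sigma)$ modulo $p^m$.

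The main obstacle is this bookkeeping of lifts and of the Galois action on the coefficients of $\eta$ modulo $p^m$. I would first verify the case $\sigma\in H_m$, and then the case $\sigma=\widetilde{\tau_0}$.

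\emph{Proof of (ii).} Write $\eta=\Omega_pS+\cdots$. The relation \eqref{eq:eta_compatibility}, compared on the linear terms, gives
\[
\sigma(\Omega_p)=\kappa_1\chi_\cyc^{-1}(\sigma)\,\Omega_p=\kappa_2^{-1}(\sigma)\Omega_p .
\]
Set $x:=\Omega_p^{j-i}\delta_{i,j}^{(m)}(\boldsymbol{\beta}_m)$. Using the $\mathcal{O}_m$-rationality of $g_{\widetilde{\tau}(\boldsymbol{\beta}_m)}$, I will show
\[
\sigma(x)=\kappa_2^{i-j}(\sigma)\,\sigma\bigl(\delta^{(m)}_{i,j}\bigr).
\]
The combination of (i) with the twisted action formula then gives $\sigma(x)\equiv x\pmod{p^m}$ for all $\sigma$ in the decomposition group, in the relevant sense on the $*$-component. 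Since $\widehat{\mathcal{O}}^{*}$ has $G$-invariants $\mathbb{Z}_p$ (Ax–Sen–Tate, applied modulo $p^m$ via the completed algebraic closure of $K_\frp$), this yields $x^*\in\mathbb{Z}_p/p^m$.

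Concretely, I will instead reduce to the $i$-th coefficient computation: expand $D^i\log(g\circ\eta)(0)$ as a polynomial in the coefficients of $\eta$, weighted so that $\Omega_p^{-i}$ times each term is invariant under $\sigma\mapsto\kappa_1^{-i}$-twisted action. I expect the passage modulo $p^m$ in this invariance argument to need care.
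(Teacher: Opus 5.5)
Your route is the paper's: equivariance of $\delta_i$ plus re-indexing for (i), and $\sigma(\Omega_p)=\kappa_2^{-1}(\sigma)\Omega_p$ plus a descent for (ii). Part (i) is correct. The paper chooses $\widetilde{\sigma}$ with $\kappa_2(\widetilde{\sigma})=\kappa_2(\sigma)$ exactly, so $\widetilde{\sigma}^{-1}\sigma$ acts trivially on $\widehat{\mathcal{O}}$. Your $h_\tau$ only fixes $F_m$, so you must also move $h$ past the coefficients of $\eta$, and you do this correctly. The formula $\eta^h=\eta\circ[\kappa_2(h)^{-1}]_{\widehat{\mathbb{G}}_m}$ follows from \eqref{eq:eta_compatibility} and the fact that $\eta$ is a homomorphism; no uniqueness statement is needed. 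More directly, $g_{h\boldsymbol{\gamma}}\circ\eta=g_{\boldsymbol{\gamma}}\circ\eta\circ[\kappa_1(h)]_{\widehat{\mathbb{G}}_m}$, since $h$ fixes the coefficients of $g_{\boldsymbol{\gamma}}\in\mathcal{O}_m[[T]]$.

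In (ii), the invariance computation is fine. Note that your displayed formula for $\sigma(x)$ drops the factor $\Omega_p^{j-i}$, and $\mathcal{O}_m$-rationality plays no role there. From $\sigma(\delta^{(m)}_{i,j}(\boldsymbol{\beta}_m))=\chi_\cyc^{-i}(\sigma)\,\delta^{(m)}_{i,j}(\sigma\boldsymbol{\beta}_m)\equiv\kappa_2^{j-i}(\sigma)\,\delta^{(m)}_{i,j}(\boldsymbol{\beta}_m)$ you get that $x$ is $G$-fixed in $\widehat{\mathcal{O}}/p^m\widehat{\mathcal{O}}$.

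The gap is the last step. Knowing $\widehat{\mathcal{O}}^G=\mathbb{Z}_p$ does not by itself give $(\widehat{\mathcal{O}}/p^m)^G=\mathbb{Z}_p/p^m$; that needs a vanishing statement for $H^1$. Ax--Sen--Tate cannot supply it. In $\mathcal{O}_{\mathbb{C}_p}/p^m$ the $G_{K_\frp}$-invariants are strictly larger than $\mathbb{Z}_p/p^m$. For example, the trace $\mathbb{Z}_p[\mu_{p^2}]\to\mathbb{Z}_p$ has image $p\mathbb{Z}_p$, so the integers of this wildly ramified extension have nonzero $H^1$. Ax's theorem only recovers an element of $K_\frp$ up to a fixed loss of precision. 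So, as written, you would only get $x^*$ modulo $p^{m-c}$ for some constant $c>0$.

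The missing input is that no ramification occurs. $F_\infty=K(E[\bar{\frp}^\infty])$ is unramified at $\frp$: it is cut out by the \'etale part of $E[p^\infty]$ over $K_\frp$, which is also why the Frobenius $\phi$ makes sense. Consequently:
\begin{itemize}
\item $\widehat{\mathcal{O}}/p^m\widehat{\mathcal{O}}=\varinjlim_{m'}\mathcal{O}_{m'}/p^m\mathcal{O}_{m'}$;
\item each $\mathcal{O}_{m'}$ is free of rank one over $O_\frp[\gal(F_{m'}/K)]$, by the integral normal basis theorem for unramified extensions;
\item hence $(\mathcal{O}_{m'}/p^m)^{\gal(F_{m'}/K)}$ is the image of the trace, namely $\mathbb{Z}_p/p^m$.
\end{itemize}
Componentwise, this is the Artin--Schreier fact that the Frobenius-invariants of the truncated Witt vectors $W_m(k)$ are $W_m(\mathbb{F}_p)$.

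The paper uses this fact tacitly when it passes from ``fixed by $G$'' to the conclusion. With it, your argument is complete. The coefficient-by-coefficient plan in your last paragraph is unnecessary, and it would meet the same descent issue.
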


\begin{proof}
    (i) Given $\sigma \in G$, take $\widetilde{\sigma} \in G$ such that
    $\kappa_2(\sigma)=\kappa_2(\widetilde{\sigma})$ and
    $\widetilde{\sigma}$ acts trivially on $E[\frp^\infty]$.
    Since $\sigma(\delta_i(\boldsymbol{\beta}_m)) =
    \widetilde{\sigma}(\delta_i(\boldsymbol{\beta}_m))$,
    Proposition~\ref{prop: CPS a.w. ell}~(iii) and \eqref{eq:eta_compatibility} give
    $\delta_i(\sigma(\boldsymbol{\beta}_m)) = \kappa_1^i(\sigma)\delta_i(\widetilde{\sigma}(\boldsymbol{\beta}_m))$. 
    Therefore, we obtain
    \[
    \delta_{i,j}^{(m)}(\sigma(\boldsymbol{\beta}_m)) 
    \equiv \kappa_1^i(\sigma) \sum_{\tau\in\gal(F_m/K)} \kappa_2^{-j}(\tau) \delta_i(\widetilde{\sigma}\widetilde{\tau}(\boldsymbol{\beta}_m)) 
    \equiv \kappa_1^i\kappa_2^j(\sigma)\delta_{i,j}^{(m)}(\boldsymbol{\beta}_m) \pmod{p^m}. 
    \]
    (ii) The same equivariance relation gives $\sigma(\delta_{i,j}^{(m)}(\boldsymbol{\beta}_m)) = \kappa_2^{j-i}(\sigma)\delta_{i,j}^{(m)}(\boldsymbol{\beta}_m)$ for every $\sigma \in G$.
    On the other hand, \eqref{eq:eta_compatibility} implies $\sigma(\Omega_p) = \kappa_2^{-1}(\sigma)\Omega_p$. 
    Hence, $\Omega_p^{j-i}\delta_{i,j}^{(m)}(\boldsymbol{\beta}_m)$ is fixed by $G$. 
\end{proof}

\begin{definition}\label{def:coates-wiles}
    For each pair of positive integers $(i,j)$,  
    we define the $(i,j)$-th Coates--Wiles homomorphism
    \[
    \phi_{(i,j)}^\cw: \mathbf{U}_\infty^\E \to \mathbb{Z}_p(i,j)
    \]
    by the congruences
    \[
    \phi_{(i,j)}^{\cw}(\boldsymbol{\beta}) \equiv \Omega_p^{j-i} \sum_{\tau\in \gal(F_m/K)} \kappa_2^{-j}(\tau) \delta_i(\widetilde{\tau}(\boldsymbol{\beta}_m))^* \pmod{p^m}
    \]
    for every $m\ge 1$, where $\widetilde{\tau}\in G$ denotes a lift of
    $\tau$ acting trivially on $E[\frp^\infty]$.
\end{definition}

\begin{remark}
    By Lemma~\ref{lem: approximation}, the right-hand side of the above congruence is well-defined.
    The homomorphism defined above was originally introduced by Yager in \cite[(14)]{Yag82}.
    Although Yager defines the first component using the differential
    operator on $\widehat{E}$, his formulation coincides with ours after
    pullback along the isomorphism $\eta$ defined in \eqref{eq:eta}, up to
    the normalization factor $\Omega_p^j$ incorporated into our definition.
\end{remark}

We now introduce the two-variable $p$-adic $L$-function following de Shalit's formulation.
Let $\varphi$ be a Gr\"ossencharacter of $K$ of conductor prime to $\frp$ 
and of infinity type $(1,0)$. 
Given a Gr\"ossencharacter of the form $\varepsilon = \chi\varphi^i\bar{\varphi}^j$, 
where $\chi$ is a finite-order character, write $\frp^n$ for the $\frp$-part
of the conductor of $\chi$. 
Regarding $\chi$ as a Galois character via the Artin reciprocity map, 
define a generalized Gauss sum \cite[II.4, (30)]{dS87} by
\[ 
G(\varepsilon) := \frac{\bigl(\varphi^i\bar{\varphi}^j\bigr)(\frp^n)}{p^n} \sum_{\gamma\in C_n} \chi(\gamma)\bigl(\zeta_{p^n}^{\gamma}\bigr)^{-1}, 
\]
where 
\[ 
C_n := \left\{ \gamma\in\gal(K(\frp^n\bar{\frp}^{\infty})/K) \ \middle|\ \left.\gamma\right|_{K(\bar{\frp}^{\infty})} = (\frp^n,K(\bar{\frp}^{\infty})/K) \right\}. 
\] 

\begin{theorem}[{\cite[II.4.14]{dS87}}]\label{thm:interpolation_properties} 
    There exists a unique measure $ \mu_{\mathrm{dS}} \in \Lambda(\mathcal{G},\widehat{\mathcal{O}})$ satisfying the interpolation formula 
    \[ 
    \Omega_p^{j-i} \int_{\mathcal{G}} \varepsilon(\sigma)\,d\mu_{\mathrm{dS}}^*(\sigma) 
    = \Omega^{j-i} (i-1)!\left(\frac{\sqrt{d_K}}{2\boldsymbol{\pi}}\right)^j G(\varepsilon) \left(1-\frac{\varepsilon(\frp)}{p}\right) \left(1 - \varepsilon^{-1}(\bar{\frp})\right) L(\varepsilon^{-1},0) 
    \] 
    for any Gr\"ossencharacter $\varepsilon$ with conductor dividing
    $p^\infty$ and infinity type $(i,j)$ satisfying $0\le -j<i$. 
    Here, $L(\varepsilon^{-1},s)$ denotes the 
    complex Hecke $L$-function associated with $\varepsilon^{-1}$. 
\end{theorem}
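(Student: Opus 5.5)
This is de Shalit's theorem, and the plan follows his construction: build the measure from elliptic units via the homomorphism $i_\infty$ of \eqref{eq:two-variable_measure}, and get the interpolation from the Kronecker limit / Damerell formula.

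\textbf{Uniqueness.} Consider the characters $\chi\kappa_1^i\kappa_2^j$ with $\chi$ of finite order and $0\le -j<i$, viewed as Gr\"ossencharacters of conductor dividing $p^\infty$. Their $\widehat{\mathcal{O}}$-linear span is dense in the continuous functions on $\mathcal{G}$. Indeed, fixing $j$ and letting $i$ vary over a residue class modulo $(p-1)p^N$ with $i\to\infty$ recovers, $p$-adically, all characters on the torsion-free part. A measure is therefore determined by its integrals against these characters, and the nonvanishing factor $\Omega_p^{j-i}$ does not affect this.

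\textbf{Existence, first step: the units and their measures.} Fix an ideal $\mathfrak{a}=(\alpha)$ prime to $6p$. The values $\theta_{\mathfrak{a}}(\Omega/(\pi^n\bar{\pi}^m);L)$ are units by Lemma~\ref{lem: algebraicity of theta}~(ii). Using the distribution relations for $\theta_{\mathfrak{a}}$, which follow from its product formula \eqref{eq: product formula for theta_a}, check that they form a norm-compatible system $\boldsymbol{\beta}_{\mathfrak{a}}\in\mathbf{U}_\infty^\E$ after projecting to principal units. Then set $\mu_{\mathfrak{a}}:=\mu_{\boldsymbol{\beta}_{\mathfrak{a}}}$, pushed forward from $G$ to $\mathcal{G}$.

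The Coleman power series of $\boldsymbol{\beta}_{\mathfrak{a}}$ is explicit: it is $\theta_{\mathfrak{a}}$ expanded at the origin in the parameter $T$, translated by the $\bar{\frp}^m$-torsion point. Hence $\delta_k$ of it equals, up to the factor $\Omega_p^{k}$ from $\eta$, the value of $\left(\frac{d}{dz}\right)^k\log\theta_{\mathfrak{a}}(z+\text{torsion})$ at $z=0$. These derivatives are algebraic, and by the classical computation they equal
\[
12\,(k-1)!\,(-1)^{k-1}\bigl(\mathrm{N}\mathfrak{a}\,E_k(\cdot;L)-E_k(\cdot;\mathfrak{a}^{-1}L)\bigr),
\]
with $E_k$ the Eisenstein--Kronecker series at torsion points.

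\textbf{Existence, second step: interpolation.} Insert these into Lemma~\ref{lem: approximation}:
\begin{itemize}
\item The sum over $\gal(F_m/K)$ with the finite-order part $\chi$ becomes a twisted Hecke $L$-value $L(\varepsilon^{-1},0)$ times the Gauss sum $G(\varepsilon)$.
\item The non-holomorphic Eisenstein series supply the factor $\left(\frac{\sqrt{d_K}}{2\boldsymbol{\pi}}\right)^j$, coming from $A(L)$ and the $\bar z$-dependence of $\eta(z;L)$.
\item The factor $1-\pi^{j-1}\bar{\pi}^{i-1}$ in Lemma~\ref{lem: approximation}, rewritten via $\varepsilon$, yields the Euler factors $\left(1-\varepsilon(\frp)/p\right)\left(1-\varepsilon^{-1}(\bar{\frp})\right)$.
\end{itemize}
This gives the formula for $\mu_{\mathfrak{a}}$ multiplied by $12\left(\mathrm{N}\mathfrak{a}-\varepsilon(\sigma_{\mathfrak{a}})\right)$. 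Finally, divide by the element $12(\mathrm{N}\mathfrak{a}-\sigma_{\mathfrak{a}})$ of the Iwasawa algebra. Choose $\mathfrak{a}$ so that this is a unit, or argue that the quotient is independent of $\mathfrak{a}$ and integral by varying $\mathfrak{a}$. The result is $\mu_{\mathrm{dS}}$.

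\textbf{Main obstacle.} I expect the hardest part to be the precise bookkeeping of constants in the second step: the powers of $\Omega$ versus $\Omega_p$ via the normalization \eqref{eq:normalization}, the sign conventions hidden in $\omega_{\frp,n}$ and $\zeta_{p^n}$, and the emergence of exactly $G(\varepsilon)$. I would handle this first for $\chi$ trivial on the $\frp$-part ($n=0$), where no Gauss sum occurs. I would then treat general $\chi$ by expanding $\chi$ on $C_n$ as a combination of additive characters $\zeta_{p^n}^{\gamma}$, each of which corresponds to translating $S$ by $\zeta_{p^n}-1$ in the Amice transform.
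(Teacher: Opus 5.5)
\textbf{The gap is in the two-variable part, $j<0$.} The paper does not reprove this statement; it quotes de Shalit, incorporating the Darmon--Lauder--Rotger correction to the power of $2\boldsymbol{\pi}$. Measured against that argument, your outline breaks down exactly for $j<0$.

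Your first step shows that the $\bar z$-terms of $\eta(z;L)$ cancel in $\theta_{\mathfrak a}$, because $A(\mathfrak a^{-1}L)=A(L)/\mathrm N\mathfrak a$. Hence every $\delta_i(\widetilde\tau(\boldsymbol\beta_m))$ is $\Omega_p^i$ times a value of the \emph{holomorphic} combination $\mathrm N\mathfrak a\,E_i(\cdot;L)-E_i(\cdot;\mathfrak a^{-1}L)$. Nothing non-holomorphic is ever produced, so the factor $(\sqrt{d_K}/2\boldsymbol\pi)^j$ cannot ``come from the $\bar z$-dependence of $\eta(z;L)$''.

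In Lemma~\ref{lem: approximation} the second variable enters only through the weights $\kappa_2^{-j}(\tau)$ on $\gal(F_m/K)$. For $j\neq0$ these are not values of a finite-order character, and they are known only modulo $p^m$. So at each level you have an algebraic combination of holomorphic Eisenstein values at points whose $\bar\frp$-component is a primitive $\bar\frp^m$-division point. The theorem asserts that these combinations converge $p$-adically to $\Omega_p^{-j}$ times an algebraic number. That number is the value at $s=0$ of a real-analytic Eisenstein--Kronecker series $\sum_\lambda\bar\lambda^{-j}\lambda^{-i}|\lambda|^{-2s}$ (over a shifted lattice, computing $L(\varepsilon^{-1},0)$), normalized by $\Omega^{j-i}(\sqrt{d_K}/2\boldsymbol\pi)^j$.

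No finite-level identity gives this. You need a genuinely $p$-adic comparison, at the ordinary CM point, between the non-holomorphic (Maass--Shimura) derivative and the $p$-adic derivative transported by $\eta$. This is the key input in Katz's construction \cite{Ka78} and in the corresponding step of \cite{dS87}, and it is exactly where $\Omega_p^{j}$ is matched with $\Omega^{j}(\sqrt{d_K}/2\boldsymbol\pi)^j$. So this is not bookkeeping: as written, your argument yields the interpolation formula only for $j=0$. Your uniqueness argument is fine, since characters with $j=0$ already have dense span.

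\textbf{Two smaller corrections.} First, the factor $1-\pi^{j-1}\bar\pi^{i-1}$ in Lemma~\ref{lem: approximation} comes from $\intlog$ and accounts only for the Euler factor at $\frp$. The factor $1-\varepsilon^{-1}(\bar\frp)$ has a different origin: the sum over $\gal(F_m/K)$ runs over primitive $\bar\frp^m$-division points, and the distribution relation produces it. It equals $1$ when $\chi$ is ramified at $\bar\frp$.

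Second, you cannot choose $\mathfrak a$ so that $\sigma_{\mathfrak a}-\mathrm N\mathfrak a$ is a unit of $\Lambda(\mathcal G,\widehat{\mathcal O})$. Every such element is killed by the character $\kappa_1\kappa_2=\chi_{\cyc}$, because $\chi_{\cyc}(\sigma_{\mathfrak a})=\mathrm N\mathfrak a$. Integrality must come from your other option, varying $\mathfrak a$. That works here because these elements have no common divisor in the two-variable Iwasawa algebra, even though they share the zero $\chi_{\cyc}$. Alternatively, use that $\theta(\rho;L)$ is itself a unit at primitive $\frp^n\bar\frp^m$-division points with $m,n\ge1$, since this conductor has two distinct prime factors. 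Either route gives the relation between $\mu_{\boldsymbol\theta(\mathfrak a)}$ and $\mu_{\mathrm{dS}}$ used in Proposition~\ref{prop:L-value}.
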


\begin{remark}
As noted by Darmon--Lauder--Rotger \cite[p.~41, after (46)]{DLR15},
a minor correction to the power of $2\boldsymbol{\pi}$ appearing in de Shalit's
interpolation formula \cite[II.4.14]{dS87} is required, and the
formula stated above incorporates this correction.
\end{remark}

Under the standard decomposition 
$\mathbb{Z}_p^\times \cong \mu_{p-1}\times(1+p\mathbb{Z}_p)$, 
each $a\in\mathbb{Z}_p^\times$ can be written uniquely as 
$a=\omega(a)\langle a\rangle$, where $\omega$ is the Teichm\"uller 
character and $\langle\cdot\rangle$ denotes the projection onto 
$1+p\mathbb{Z}_p$. 
Since the images of  $\Delta\cong\gal(K(E[p^\infty])/K(p^\infty))$ 
under both $\kappa_1$ and $\kappa_2$ are contained in $\mu_{p-1}$, the characters 
$\langle\kappa_1\rangle$ and $\langle\kappa_2\rangle$ are trivial on $\Delta$. 
They therefore descend to well-defined characters 
\[ 
\langle\kappa_1\rangle,\ \langle\kappa_2\rangle \colon \mathcal{G} \to 1+p\mathbb{Z}_p. 
\]

\begin{definition}[{\cite[II.4.17]{dS87}}]\label{def:two-variable_p-adic_L}
Let $\chi$ be a finite-order character of $\mathcal G$.
We define the two-variable $p$-adic $L$-function by
\[
L_p(s_1,s_2,\chi) = \int_{\mathcal G}\chi^{-1}\langle\kappa_1\rangle^{s_1}\langle\kappa_2\rangle^{s_2}(\sigma)\,d\mu_{\mathrm{dS}}^*(\sigma) \qquad (s_1, s_2 \in \mathbb{Z}_p). 
\]
\end{definition}

Let $\mathfrak{a}$ be a nontrivial ideal of $O_K$ prime to $p$. 
For integers $m, n \ge 1$, Lemma~\ref{lem: algebraicity of theta}~(ii) allows us to define the principal unit
\begin{equation}\label{def:elliptic_unit}
    \theta_{m,n}(\mathfrak{a}) := \left\langle \theta_{\mathfrak{a}}\left(\Omega; \frp^n\bar{\frp}^mL\right)\right\rangle \in U_{m,n}^\E.
\end{equation}
These units form a norm-compatible system $\boldsymbol{\theta}(\mathfrak{a}) = (\theta_{m,n}(\mathfrak{a}))_{m,n} \in \mathbf{U}_\infty^\E$. 
For integers $i,j$ satisfying $i\equiv j\pmod{w_K}$, we also define 
a finite-order character $\omega^{(i,j)}\colon \mathcal{G}\to \mathbb{Z}_p^\times$
by $\omega^{(i,j)}(\sigma) = \omega\bigl(\kappa_1^i\kappa_2^j(\sigma)\bigr)$.
The following proposition compares the $p$-adic measure $\mu_{\boldsymbol{\theta}(\mathfrak{a})} \in \Lambda(G, \widehat{\mathcal{O}})$ with the two-variable $p$-adic $L$-function defined above.

\begin{proposition}\label{prop:L-value}
    For each pair of integers $(i,j)$ satisfying
    $i\equiv j\pmod{w_K}$, we have
    \[
    \int_G \kappa_1^i\kappa_2^j(\sigma) \,d\mu_{\boldsymbol{\theta}(\mathfrak{a})}^*(\sigma)
    = 12\#\Delta\,
    \bigl(\kappa_1^i\kappa_2^j(\sigma_{\mathfrak{a}}) -\operatorname{N}\mathfrak{a}\bigr)
    L_p(i,j,\omega^{-(i,j)}), 
    \]
    where $\sigma_{\mathfrak{a}} = \left(\mathfrak{a}, K(p^\infty)/K\right) \in \mathcal{G}$ denotes the Artin symbol of $\mathfrak{a}$.
\end{proposition}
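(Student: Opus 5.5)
We will derive this from de Shalit's construction of $\mu_{\mathrm{dS}}$ from elliptic units. The plan has three steps.

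First, we identify $\mu_{\boldsymbol{\theta}(\mathfrak{a})}$ with a twist of $\mu_{\mathrm{dS}}$. In \cite[II.4.12--4.14]{dS87} the measure $\mu_{\mathrm{dS}}$ on $\mathcal G$ is obtained from the Coleman power series of the elliptic units $\boldsymbol{\theta}(\mathfrak{a})$. Concretely, the measure $i_\infty(\boldsymbol{\theta}(\mathfrak{a}))$ of \eqref{eq:two-variable_measure} on $G$ is related to $\mu_{\mathrm{dS}}$ by two facts.

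\begin{itemize}
\item Its push-forward to $\mathcal G$ equals $12\,(\sigma_{\mathfrak a}-\mathrm{N}\mathfrak a)\,\mu_{\mathrm{dS}}$, up to the twisting convention. The factor $12$ comes from $\theta=\Delta\mathfrak k^{12}$. The factor $(\sigma_{\mathfrak a}-\mathrm N\mathfrak a)$ comes from the quotient $\theta(z;L)^{\mathrm N\mathfrak a}/\theta(z;\mathfrak a^{-1}L)$ together with Shimura reciprocity $\theta(\Omega;\mathfrak a^{-1}\frp^n\bar\frp^mL)=\sigma_{\mathfrak a}(\theta(\Omega;\frp^n\bar\frp^mL))$.
\item $\mu_{\boldsymbol{\theta}(\mathfrak{a})}$ is $\Delta$-invariant, because the units $\theta_{m,n}(\mathfrak a)$ lie in the ray class fields $K(\frp^n\bar\frp^m)$. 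Hence it is the pull-back along $G\to\mathcal G$ of a measure on $\mathcal G$, and integrating a function factoring through $\mathcal G$ produces the factor $\#\Delta$.
\end{itemize}

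I would check these against de Shalit's definition and the normalization \eqref{eq:normalization}. In particular, the convention $\mu_{\tau\boldsymbol\beta}(\cdot)=\mu_{\boldsymbol\beta}(\tau^{-1}\cdot)$ converts multiplication by $\sigma_{\mathfrak a}$ into the scalar $\kappa_1^i\kappa_2^j(\sigma_{\mathfrak a})$ after integrating $\kappa_1^i\kappa_2^j$.

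Second, we rewrite the character. Since $i\equiv j\pmod{w_K}$, the character $\kappa_1^i\kappa_2^j$ factors through $\mathcal G$ and decomposes as
\[
\kappa_1^i\kappa_2^j=\omega^{(i,j)}\langle\kappa_1\rangle^i\langle\kappa_2\rangle^j,
\]
using $\omega(\kappa_1^i\kappa_2^j)=\omega(\kappa_1)^i\omega(\kappa_2)^j$. Integrating this against $\mu_{\mathrm{dS}}^*$ gives exactly $L_p(i,j,\omega^{-(i,j)})$ by Definition~\ref{def:two-variable_p-adic_L}, with $\chi^{-1}=\omega^{(i,j)}$. Combining the two steps yields
\[
\int_G\kappa_1^i\kappa_2^j\,d\mu^*_{\boldsymbol\theta(\mathfrak a)}=12\#\Delta\bigl(\kappa_1^i\kappa_2^j(\sigma_{\mathfrak a})-\mathrm N\mathfrak a\bigr)L_p(i,j,\omega^{-(i,j)}).
\]

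The main obstacle is the first step. One must match precisely the normalization of de Shalit's measure, which is defined via $\log$ of the Coleman series of $\theta$ pulled back by $\eta$, with the $\#\Delta$ and $12$ factors and the direction of the $\sigma_{\mathfrak a}$-action.

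To settle this, I would first compare the two sides at all characters $\varepsilon$ in the interpolation range of Theorem~\ref{thm:interpolation_properties}. There, $\int\varepsilon\, d\mu_{\boldsymbol\theta(\mathfrak a)}$ is computed via Lemma~\ref{lem: approximation} as a Coates--Wiles logarithmic derivative of $\theta_{\mathfrak a}$. That derivative is an Eisenstein-series value, hence a Hecke $L$-value. Since such characters are dense, this proves equality of the measures, and the identity then holds for all $(i,j)$ by continuity.
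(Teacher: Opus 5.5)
Your overall strategy matches the paper's. You identify the measure of the elliptic units with a multiple of $\mu_{\mathrm{dS}}$ via de Shalit's construction. You then use $i\equiv j\pmod{w_K}$ to descend $\kappa_1^i\kappa_2^j=\omega^{(i,j)}\langle\kappa_1\rangle^i\langle\kappa_2\rangle^j$ to $\mathcal G$ and read off $L_p(i,j,\omega^{-(i,j)})$. Your second step is correct.

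\textbf{The gap: the factor $\#\Delta$.} Your two bullets in the first step are inconsistent. For any continuous $f$ on $\mathcal G$ one has $\int_G f\circ\res\,d\mu=\int_{\mathcal G}f\,d(\res_*\mu)$. So $\Delta$-invariance of $\mu_{\boldsymbol\theta(\mathfrak a)}$ cannot contribute any factor beyond what is already contained in $\res_*\mu_{\boldsymbol\theta(\mathfrak a)}$. If $\res_*\mu_{\boldsymbol\theta(\mathfrak a)}$ were $12(\sigma_{\mathfrak a}-\operatorname{N}\mathfrak a)\mu_{\mathrm{dS}}$, as your first bullet asserts, you would obtain the formula \emph{without} $\#\Delta$. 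What is true, and what the paper uses, is
$\res_*\mu_{\boldsymbol\theta(\mathfrak a)}=12\#\Delta(\sigma_{\mathfrak a}-\operatorname{N}\mathfrak a)\mu_{\mathrm{dS}}$.

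\textbf{The missing idea: comparing two Coleman-measure constructions.} The measure $\mu_{\boldsymbol\theta(\mathfrak a)}=i_\infty(\boldsymbol\theta(\mathfrak a))$ here is built on the tower $K(E[\frp^n\bar\frp^m])$, with $E$ defined over $K$. De Shalit's II.4.14 instead concerns the analogous map $i'_\infty\colon\mathbf U'_\infty\to\Lambda(\mathcal G,\widehat{\mathcal O})$, built on the ray class fields $K(\frp^n\bar\frp^m)$. The argument needs three ingredients:
\begin{enumerate}
\item the compatibility $\res_*\circ i_\infty=i'_\infty\circ\nr$ for the norm $\nr\colon\mathbf U^{\E}_\infty\to\mathbf U'_\infty$;
\item the identity $\nr\boldsymbol\theta(\mathfrak a)=\boldsymbol\theta(\mathfrak a)^{\#\Delta}$, which holds because $\theta_{m,n}(\mathfrak a)$ already lies in $K(\frp^n\bar\frp^m)$ by Lemma~\ref{lem: algebraicity of theta}~(ii);
\item de Shalit's argument, which gives $i'_\infty(\boldsymbol\theta(\mathfrak a))=12(\sigma_{\mathfrak a}-\operatorname{N}\mathfrak a)\mu_{\mathrm{dS}}$.
\end{enumerate}
In your pull-back language, $\mu_{\boldsymbol\theta(\mathfrak a)}$ is the pull-back of $i'_\infty(\boldsymbol\theta(\mathfrak a))$. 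But you cannot know which measure on $\mathcal G$ it is pulled back from without this norm compatibility, and that is exactly where $\#\Delta$ enters.

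\textbf{On your fallback.} Comparing both sides on the interpolation range is sound in principle. However, it amounts to re-proving de Shalit's theorem in the present normalization. You would have to evaluate $\delta_i$ of the elliptic units as Eisenstein numbers and track every Euler factor, period and index such as $[K_{m,n}:K(\frp^n\bar\frp^m)]$. As written it is only asserted, so it does not settle the constant.
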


\begin{proof}
    Let $\mathbf{U}_\infty':=\varprojlim_{m,n}U_{m,n}'$, where, for $m,n\ge 1$, 
    $U_{m,n}'$ denotes the group of principal units of 
    $K(\frp^n\bar{\frp}^m)\otimes_K K_\frp$. 
    As in \eqref{eq:two-variable_measure}, we obtain a homomorphism 
    $i_\infty'\colon \mathbf{U}_\infty' \to\Lambda(\mathcal G,\widehat{\mathcal O})$. 
    The compatibility of the norm map $\nr\colon\mathbf{U}_\infty^\E \to \mathbf{U}_\infty'$ 
    with the push-forward $\res_*: \Lambda(G, \widehat{\mathcal{O}}) \to 
    \Lambda(\mathcal{G}, \widehat{\mathcal{O}})$ induced by $\res:G 
    \twoheadrightarrow \mathcal{G}$ gives $\res_*\circ i_\infty=i_\infty'\circ\nr$.
    By Lemma~\ref{lem: algebraicity of theta}~(ii),
    $\theta_{m,n}(\mathfrak a)\in U_{m,n}'$, and hence
    $\nr\bigl(\boldsymbol{\theta}(\mathfrak a)\bigr)
    = \boldsymbol{\theta}(\mathfrak a)^{\#\Delta}$.
    Therefore, the proof of \cite[II.4.14]{dS87} gives
    \[
    \res_*\mu_{\boldsymbol{\theta}(\mathfrak a)}
    = 12\#\Delta\,\bigl(\sigma_{\mathfrak{a}}-\operatorname{N}\mathfrak a\bigr)
    \mu_{\mathrm{dS}}.
    \]
    Since $i\equiv j\pmod{w_K}$, the character
    $\kappa_1^i\kappa_2^j$ descends to $\mathcal G$, and the desired
    equality follows from the above identity of measures.
\end{proof}

\section{Coleman-theoretic comparison}
In this section, we prove our main formula. 

\subsection{Congruences for Coleman power series and logarithmic derivatives}
First, we investigate the relation between the Coleman power series associated with $\widehat{\mathbb{G}}_m$ and those associated with $\widehat{E}$.
\begin{lemma}\label{lem: shift formula}
    Suppose that $f,g \in \widehat{\mathcal{O}}[[S]]$ satisfy
    $f(S)=\left((1+S)^{p^m}-1\right)g(S)$ for some integer $m\ge 0$. 
    Then, for any continuous function $\psi:\mathbb{Z}_p\to \widehat{\mathcal{O}}$, one has
    \[
    \int_{\mathbb{Z}_p} \psi(x)\,d\mu_f(x)
    =
    \int_{\mathbb{Z}_p} \bigl(\psi(x+p^m)-\psi(x)\bigr)\,d\mu_g(x),
    \]
    where $\mu_f,\mu_g\in \Lambda(\mathbb{Z}_p,\widehat{\mathcal{O}})$ are the measures corresponding to $f$ and $g$ via \eqref{eq:amice_transform}.
\end{lemma}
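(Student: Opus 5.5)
We would prove the shift formula directly from the definition of the Amice transform \eqref{eq:amice_transform}, treating it as an identity of measures. The first step is to identify the measure $\mu_f$. By definition,
\[
f(S)=\bigl((1+S)^{p^m}-1\bigr)g(S)=\int_{\mathbb{Z}_p}(1+S)^{x+p^m}\,d\mu_g(x)-\int_{\mathbb{Z}_p}(1+S)^{x}\,d\mu_g(x).
\]
Write $t_{p^m}\colon\mathbb{Z}_p\to\mathbb{Z}_p$, $x\mapsto x+p^m$, for the translation. The first integral is then the Amice transform of the push-forward $(t_{p^m})_*\mu_g$. Since the Amice transform is injective (indeed an isomorphism $\Lambda(\mathbb{Z}_p,\widehat{\mathcal{O}})\cong\widehat{\mathcal{O}}[[S]]$), we conclude that
\[
\mu_f=(t_{p^m})_*\mu_g-\mu_g.
\]

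The second step integrates an arbitrary continuous $\psi$ against this identity. Unwinding the push-forward gives $\int\psi\,d(t_{p^m})_*\mu_g=\int\psi(x+p^m)\,d\mu_g(x)$, which is exactly the claimed formula.

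The one point that needs care is extending from characters to all continuous $\psi$. The identity $\int\psi\,d\mu_f=\int(\psi\circ t_{p^m}-\psi)\,d\mu_g$ is checked on the functions $x\mapsto(1+S)^x$, or equivalently on the binomial basis $\binom{x}{r}$ obtained as coefficients of $S^r$. Both sides are continuous $\widehat{\mathcal{O}}$-linear functionals on $C(\mathbb{Z}_p,\widehat{\mathcal{O}})$, since measures are bounded and translation preserves the sup norm. By Mahler's theorem, the $\binom{x}{r}$ span a dense submodule. Hence the identity holds for every continuous $\psi$.

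Because $\widehat{\mathcal{O}}$ is a product of complete local rings (a completed semilocal ring), Mahler expansions and the Amice isomorphism hold componentwise. This is the only mild obstacle, and we expect no real difficulty.
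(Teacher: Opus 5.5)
Your proposal is correct and is essentially the paper's argument. The paper writes $\mu_{(1+S)^{p^m}-1}=\delta_{p^m}-\delta_0$ and hence $\mu_f=(\delta_{p^m}-\delta_0)\ast\mu_g$, which is exactly your $(t_{p^m})_*\mu_g-\mu_g$, and then integrates $\psi$; once the measures are identified, your separate Mahler-density step is automatic rather than an extra obstacle.
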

\begin{proof}
    Let $\delta_a$ be the Dirac measure at $a\in\mathbb{Z}_p$. 
    By the Amice transform,
    \[
    (1 + S)^{p^m} - 1 = \int_{\mathbb{Z}_p} (1 + S)^x\,d(\delta_{p^m} - \delta_0)(x). 
    \]
    This means that $\mu_{\left((1 + S)^{p^m} - 1\right)} = \delta_{p^m} - \delta_0$, 
    and hence $\mu_f = (\delta_{p^m} - \delta_0) \ast \mu_g$, so that
    \begin{align*}
        \int_{\mathbb{Z}_p} \psi(x)\,d\mu_f(x) &= \iint_{\mathbb{Z}_p\times\mathbb{Z}_p} \psi(x + y)\,d(\delta_{p^m} - \delta_0)(y)\,d\mu_g(x) \\
        &= \int_{\mathbb{Z}_p} (\psi(x + p^m) - \psi(x))\,d\mu_g(x). 
    \end{align*}
    This yields the desired equality. 
\end{proof}

\begin{proposition}\label{prop: relation of log derivative}
    Let $\boldsymbol{\epsilon}=(\boldsymbol{\epsilon}_m)_m = 
    (\epsilon_{m,n})_{m,n}\in\mathbf{U}_\infty^{\cyc}$ and 
    $\boldsymbol{\beta}=(\boldsymbol{\beta}_m)_m =
    (\beta_{m,n})_{m,n}\in\mathbf{U}_\infty^\E$ be norm-compatible 
    systems corresponding to each other via the isomorphism 
    $\mathbf{U}_\infty^{\cyc} \cong \mathbf{U}_\infty^\E$, 
    so that $\epsilon_{m,n}=\beta_{m,n}$ whenever $n\leq m$.
    Then the following assertions hold.
    \begin{enumerate}
        \item  For each $m\geq 1$, 
        \[
        f_{\boldsymbol{\epsilon}_m}(S) \equiv 
        (g_{\boldsymbol{\beta}_m}\circ\eta)(S) \pmod{(1+S)^{p^m}-1}.
        \]
        In particular, $f_{\boldsymbol{\epsilon}_m}(0) = 
        g_{\boldsymbol{\beta}_m}(0)$.

        \item For every $m\ge 1$ and $k \in \mathbb{Z}$, we have
        \[
        D^k\intlog(f_{\boldsymbol{\epsilon}_m})(0) \equiv D^k\intlog(g_{\boldsymbol{\beta}_m} \circ \eta)(0) \pmod{p^m}.
        \]
    \end{enumerate}
\end{proposition}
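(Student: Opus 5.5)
For (i), I will compare the two power series through their values at the torsion points $\zeta_{p^n}-1$ for $n\le m$, and then use a Weierstrass-preparation-type divisibility argument. Put $h(S):=f_{\boldsymbol{\epsilon}_m}(S)-(g_{\boldsymbol{\beta}_m}\circ\eta)(S)\in\widehat{\mathcal{O}}[[S]]$.

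For $1\le n\le m$, the normalization \eqref{eq:normalization} gives $\omega_n=\eta^{\phi^{-n}}(\zeta_{p^n}-1)$. Hence
\[
(g_{\boldsymbol{\beta}_m}\circ\eta)^{\phi^{-n}}(\zeta_{p^n}-1)=g_{\boldsymbol{\beta}_m}^{\phi^{-n}}(\omega_n)=\beta_{m,n}=\epsilon_{m,n}=f_{\boldsymbol{\epsilon}_m}^{\phi^{-n}}(\zeta_{p^n}-1).
\]
Thus $h^{\phi^{-n}}$ vanishes at $\zeta_{p^n}-1$. Since $\phi$ acts on coefficients and commutes with evaluation at elements of $\mathbb{Z}_p[\mu_{p^\infty}]$ up to the appropriate twist, it follows that $h$ itself vanishes at $\sigma(\zeta_{p^n}-1)=\zeta_{p^n}^{a}-1$ for all units $a$. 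Indeed, applying $\phi^n$ coefficientwise sends the identity $h^{\phi^{-n}}(\zeta_{p^n}-1)=0$ to $h(\phi^n(\zeta_{p^n})-1)=0$, and $\phi^n(\zeta_{p^n})$ is again a primitive $p^n$-th root of unity. Galois conjugation over $K_{\frp}$ in the cyclotomic direction then gives vanishing at all primitive $p^n$-th roots. This works componentwise in $\mathcal{O}_m$, because the cyclotomic extension is linearly disjoint on each factor.

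For $n=0$, I need $h(0)=0$, i.e. $f_{\boldsymbol{\epsilon}_m}(0)=g_{\boldsymbol{\beta}_m}(0)$. I will get this from Proposition~\ref{prop: CPS a.w. ell}(i) together with its cyclotomic analogue $\epsilon_{m,0}=f^{1-\phi^{-1}}(0)$, using $\epsilon_{m,0}=\beta_{m,0}$. This determines $f(0)/g(0)$ up to an element fixed by $\phi$. Both values lie in the principal-unit part: in the $1$-unit (pro-$p$) component, $x^{1-\phi^{-1}}=1$ forces $x\in\mathbb{Z}_p$-points. To pin the equality down, I can alternatively run the same vanishing argument for the norm relations, using $\cnr(f)=f^\phi$ and Lemma~\ref{lem: norm relation for elliptic Coleman series}(ii). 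These give $\prod_{\xi}h$-type relations at $S=0$, so $f(0)^p/f^\phi(0)$ agrees with the corresponding expression for $g$, since both equal norms of $\epsilon_{m,1}=\beta_{m,1}$.

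I expect this $n=0$ point to be the main obstacle. Once it is settled, $h$ vanishes at all roots of $(1+S)^{p^m}-1=\prod_{n\le m}\Phi_{p^n}(1+S)$, which is a distinguished polynomial with distinct roots. By the Weierstrass division theorem over $\widehat{\mathcal{O}}$, applied componentwise, $h$ is divisible by it.

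For (ii), I write $f_{\boldsymbol{\epsilon}_m}=(g_{\boldsymbol{\beta}_m}\circ\eta)\cdot(1+((1+S)^{p^m}-1)u)$. The map $\intlog$ is a homomorphism on the groups of \eqref{eq:integral_log}, since both series satisfy $\cnr=\phi$ by Lemma~\ref{lem: fundamental properties of Coleman power ser.}(ii) and Lemma~\ref{lem: norm relation for elliptic Coleman series}(ii). Hence the difference of the two integral logarithms is $\intlog$ of the ratio. I will show that this difference has the form $((1+S)^{p^m}-1)g$ with $g\in\widehat{\mathcal{O}}[[S]]$, using the congruence $f^\phi((1+S)^p-1)\equiv(g\circ\eta)^\phi((1+S)^p-1)\bmod (1+S)^{p^{m+1}}-1$ and the integrality of $\frac1p\log$ on series congruent to $1$.

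Then $D^k(\cdot)(0)=\int_{\mathbb{Z}_p^\times}x^k\,d\mu$ by Lemma~\ref{lem: fundamental properties of Coleman power ser.}(iii). Lemma~\ref{lem: shift formula} applied with $\psi(x)=x^k\mathbf{1}_{\mathbb{Z}_p^\times}(x)$ turns this into $\int_{\mathbb{Z}_p}(\psi(x+p^m)-\psi(x))\,d\mu_g$. Since $\psi(x+p^m)-\psi(x)\equiv0\pmod{p^m}$, this integral is $\equiv 0\pmod{p^m}$, which gives (ii) for all $k\in\mathbb{Z}$.
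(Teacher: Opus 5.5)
There is a genuine gap at exactly the point you flag: the equality of constant terms $f_{\boldsymbol{\epsilon}_m}(0)=g_{\boldsymbol{\beta}_m}(0)$. Neither of your two arguments closes it. The rest of your plan is sound: the vanishing at the primitive $p^n$-th roots for $1\le n\le m$, and your argument for (ii) once (i) is known, agree with the paper. One small correction there: because of $\eta$, $h$ has coefficients in $\widehat{\mathcal{O}}$, not $\mathcal{O}_m$, so you should conjugate by $\gal(F_\infty(\mu_{p^n})/F_\infty)$, which fixes $\widehat{\mathcal{O}}$.

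Why the constant-term step fails:
\begin{itemize}
\item Put $x:=f_{\boldsymbol{\epsilon}_m}(0)/g_{\boldsymbol{\beta}_m}(0)$. The relation $\epsilon_{m,0}=\beta_{m,0}$ gives only $\phi(x)=x$, as you note.
\item The norm-relation argument adds nothing. Evaluating $\cnr(f)=f^{\phi}$ at $S=0$ gives $f^{\phi}(0)=f(0)\prod_{\xi\neq1}f(\xi-1)$, i.e.\ $f^{\phi}(0)/f(0)=\phi(\epsilon_{m,0})$, which is the same relation again.
\item Consequently $f(0)^p/f^{\phi}(0)=f(0)^{p-1}/\phi(\epsilon_{m,0})$ is not a norm of $\epsilon_{m,1}$. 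It still contains the unknown $f(0)$, and its agreement with the $g$-side is equivalent to $x^{p-1}=1$, which is what you are trying to prove.
\end{itemize}

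In fact no argument using only the layer-$m$ data $\epsilon_{m,n}=\beta_{m,n}$ ($n\le m$) can work. These data pin down $f_{\boldsymbol{\epsilon}_m}(0)$ only up to a factor $\equiv1\pmod{p^m}$. Multiplying $\boldsymbol{\epsilon}_m$ by a norm-compatible system $u$ with $u_n=1$ for all $n\le m$ leaves the data unchanged but multiplies the constant term by $f_u(0)$. Such $u$ with $f_u(0)\neq1$ exist. I believe this already happens over $\mathbb{Q}_p$ with $\log f_u(0)=p^m$: apply Coleman's exact sequence to a measure on $\mathbb{Z}_p^\times$ that gives mass $1$ to every unit class mod $p^m$ and has vanishing first moment, then twist by a power of $(\zeta_{p^n})_n$.

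The missing idea is to use the second variable, as the paper does.
\begin{enumerate}
\item Do the Weierstrass division by $((1+S)^{p^m}-1)/S$, which needs only the vanishing you already proved. Evaluating at $S=0$ gives $f_{\boldsymbol{\epsilon}_m}(0)\equiv g_{\boldsymbol{\beta}_m}(0)\pmod{p^m}$.
\item Run the same argument at every level $m'\ge m$, where $\epsilon_{m',n}=\beta_{m',n}$ for all $n\le m'$. This gives the congruence modulo $p^{m'}$ at level $m'$.
\item Push this down with $\nr_{m',m}$, using $\nr_{m',m}(f_{\boldsymbol{\epsilon}_{m'}}(0))=f_{\boldsymbol{\epsilon}_m}(0)$ and $\nr_{m',m}(g_{\boldsymbol{\beta}_{m'}}(0))=g_{\boldsymbol{\beta}_m}(0)$. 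These follow from uniqueness of Coleman power series: the product of the coefficientwise conjugates of $f_{\boldsymbol{\epsilon}_{m'}}$ (resp.\ $g_{\boldsymbol{\beta}_{m'}}$), under lifts of $\gal(F_{m'}/F_m)$ acting trivially on $\mu_{p^\infty}$ (resp.\ $E[\frp^\infty]$), interpolates the norms of the $\epsilon_{m',n}$ (resp.\ $\beta_{m',n}$).
\item Since norms preserve congruences modulo $p^{m'}$, letting $m'\to\infty$ gives $f_{\boldsymbol{\epsilon}_m}(0)=g_{\boldsymbol{\beta}_m}(0)$.
\item Write $f_{\boldsymbol{\epsilon}_m}-g_{\boldsymbol{\beta}_m}\circ\eta=\frac{(1+S)^{p^m}-1}{S}\,Q$. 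Evaluating at $0$ gives $p^mQ(0)=0$, so $S\mid Q$, which yields the full congruence modulo $(1+S)^{p^m}-1$.
\end{enumerate}

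In (ii), you pass from $X\equiv1\pmod{(1+S)^{p^m}-1}$ to divisibility of $\tfrac1p\log X$, where $X=r^p/r^{\phi}((1+S)^p-1)$ and $r=f_{\boldsymbol{\epsilon}_m}/(g_{\boldsymbol{\beta}_m}\circ\eta)$. State explicitly why this holds: write $X=1+pY$, and use that $\widehat{\mathcal{O}}[[S]]/((1+S)^{p^m}-1)$ is $p$-torsion-free, so $Y$ is itself divisible by $(1+S)^{p^m}-1$.
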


\begin{proof}
    (i) Fix $m\geq 1$. 
    By the interpolation properties and our assumption, we have
    \[
    f_{\boldsymbol{\epsilon}_m}^{\phi^{-n}}(\zeta_{p^n}-1)
    =
    (g_{\boldsymbol{\beta}_m}\circ\eta)^{\phi^{-n}}
    (\zeta_{p^n}-1)
    \qquad (1\leq n\leq m).
    \]
    Applying $\phi^n$ yields $f_{\boldsymbol{\epsilon}_m}(\xi_n-1)
    =(g_{\boldsymbol{\beta}_m}\circ\eta)(\xi_n-1)$, where $\xi_n:=\phi^n(\zeta_{p^n})$.
    Let $F_\infty:=\bigcup_{m'\geq 0}F_{m'}$. 
    Since $\xi_n$ is a primitive $p^n$-th root of unity and
    $\gal(F_\infty(\mu_{p^n})/F_\infty)$ acts trivially on $\widehat{\mathcal{O}}$, 
    taking Galois conjugates of these equalities for $1\leq n\leq m$ gives
    \[
    f_{\boldsymbol{\epsilon}_m}(\zeta-1) =
    (g_{\boldsymbol{\beta}_m}\circ\eta)(\zeta-1)
    \]
    for every $\zeta\in\mu_{p^m}\setminus\{1\}$. Weierstrass division
    \cite[Proposition~7.2]{Was97} therefore gives
    \begin{equation}\label{eq:preliminary_congruence}
    f_{\boldsymbol{\epsilon}_m}(S)
    \equiv
    (g_{\boldsymbol{\beta}_m}\circ\eta)(S)
    \pmod{((1+S)^{p^m}-1)/S}.
    \end{equation}
    Evaluating at $S=0$ and using $\eta(0)=0$, we obtain
    \[
    f_{\boldsymbol{\epsilon}_m}(0)
    \equiv
    g_{\boldsymbol{\beta}_m}(0)
    \pmod{p^m}.
    \]
    For $m'\geq m$, let
    $\nr_{m',m}\colon\mathcal O_{m'}\to\mathcal O_m$ denote the norm map.
    Arguing as in \cite[Lemma~2]{Yag82}, we have
    $\nr_{m',m}\bigl(f_{\boldsymbol{\epsilon}_{m'}}(0)\bigr)
    =f_{\boldsymbol{\epsilon}_m}(0)$ and
    $\nr_{m',m}\bigl(g_{\boldsymbol{\beta}_{m'}}(0)\bigr)
    =g_{\boldsymbol{\beta}_m}(0)$.
    Applying the preceding argument at level $m'$ and then the norm map
    $\nr_{m',m}$, which preserves congruences modulo $p^{m'}$, we obtain
    \[
    f_{\boldsymbol{\epsilon}_m}(0)
    \equiv
    g_{\boldsymbol{\beta}_m}(0)
    \pmod{p^{m'}}
    \]
    for every $m'\geq m$. Since $\mathcal{O}_m$ is $p$-adically separated,
    it follows that
    $f_{\boldsymbol{\epsilon}_m}(0)=g_{\boldsymbol{\beta}_m}(0)$.
    Combining this equality with \eqref{eq:preliminary_congruence} and using
    the fact that $\widehat{\mathcal{O}}$ is $p$-torsion-free, we obtain
    \[
    f_{\boldsymbol{\epsilon}_m}(S)
    \equiv
    (g_{\boldsymbol{\beta}_m}\circ\eta)(S)
    \pmod{(1+S)^{p^m}-1}.
    \]

    \noindent
    (ii) Let $h(S) := (f_{\boldsymbol{\epsilon}_m}/(g_{\boldsymbol{\beta}_m}\circ\eta))(S) \in \widehat{\mathcal{O}}[[S]]^\times$.
    By Lemma~\ref{lem: fundamental properties of Coleman power ser.} (ii) and 
    Lemma~\ref{lem: norm relation for elliptic Coleman series} (ii), 
    we have $\cnr(h) = h^\phi$. 
    Hence, $\mu_{\intlog(h)}$ is supported on $\mathbb{Z}_p^\times$ by Lemma~\ref{lem: fundamental properties of Coleman power ser.}~(iii).
    Since $\intlog(h) = \intlog(f_{\boldsymbol{\epsilon}_m}) -
    \intlog(g_{\boldsymbol{\beta}_m}\circ\eta)$, 
    it suffices to prove $D^k\intlog(h)(0) \equiv 0 \pmod{p^m}$. 
    By (i) and the fact that $\intlog$ descends to $\widehat{\mathcal{O}}[[S]] \big/ \bigl((1+S)^{p^m}-1\bigr)$, we may write 
    $\intlog(h)(S) = \left((1 + S)^{p^m} - 1\right)Q(S)$ for some $Q(S) \in \widehat{\mathcal{O}}[[S]]$.
    For $k\in\mathbb{Z}$, let
    \[
    \psi_k(x)=
    \begin{cases}
    x^k & \text{if } x\in\mathbb{Z}_p^\times,\\
    0 & \text{if } x\in p\mathbb{Z}_p.
    \end{cases}
    \]
    Since $\mu_{\intlog(h)}$ is supported on $\mathbb{Z}_p^\times$,
    Lemma~\ref{lem: shift formula} gives
    \[
    D^k\intlog(h)(0)
    = \int_{\mathbb{Z}_p}\psi_k(x)\,d\mu_{\intlog(h)}(x) 
    = \int_{\mathbb{Z}_p} \bigl(\psi_k(x+p^m)-\psi_k(x)\bigr) \,d\mu_Q(x).
    \]
    If $x\in p\mathbb{Z}_p$, the integrand is zero, while if
    $x\in\mathbb{Z}_p^\times$, then
    \[
    \psi_k(x+p^m)-\psi_k(x)=
    x^k\left(\left(1+\frac{p^m}{x}\right)^k-1\right)
    \in p^m\mathbb{Z}_p.
    \]
    Since $\mu_Q$ is an $\widehat{\mathcal{O}}$-valued measure, it follows that
    \[
    D^k\intlog(h)(0)\equiv 0 \pmod{p^m}.
    \]
    This proves the desired congruence.
\end{proof}

For later use, we give the analogous congruence for the ordinary logarithm.

\begin{lemma}\label{lem: relation of ordinary log derivative}
    Under the same assumptions as in 
    Proposition~\ref{prop: relation of log derivative}, 
    for every $m\ge 1$ and $k\ge 1$, we have
    \[
    D^k\log(f_{\boldsymbol{\epsilon}_m})(0)
    \equiv
    \delta_k(\boldsymbol{\beta}_m)
    \pmod{p^m}.
    \]
\end{lemma}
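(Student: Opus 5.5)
The plan is to deduce this directly from Proposition~\ref{prop: relation of log derivative}~(i), following the pattern of part~(ii) of that proposition, with $\intlog$ replaced by the ordinary logarithm. Put
\[
h(S):=\frac{f_{\boldsymbol{\epsilon}_m}(S)}{(g_{\boldsymbol{\beta}_m}\circ\eta)(S)}\in\widehat{\mathcal{O}}[[S]]^\times.
\]
Since $D$ is a derivation, $D\log$ turns quotients into differences. Hence
\[
D^k\log(f_{\boldsymbol{\epsilon}_m})(0)-\delta_k(\boldsymbol{\beta}_m)=D^{k-1}\Bigl(\frac{Dh}{h}\Bigr)(0),
\]
and it suffices to show that this quantity is divisible by $p^m$.

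By Proposition~\ref{prop: relation of log derivative}~(i), we can write $f_{\boldsymbol{\epsilon}_m}-g_{\boldsymbol{\beta}_m}\circ\eta=((1+S)^{p^m}-1)R(S)$ with $R\in\widehat{\mathcal{O}}[[S]]$. Dividing by the unit $g_{\boldsymbol{\beta}_m}\circ\eta$ gives
\[
h=1+\bigl((1+S)^{p^m}-1\bigr)Q, \qquad Q\in\widehat{\mathcal{O}}[[S]].
\]
Set $u:=(1+S)^{p^m}-1$. The key observation is that the ideal $I=(u)$ is stable under $D$ modulo $p^m$:
\[
Du=p^m(1+S)^{p^m},
\]
so that $D(I)\subset I+p^m\widehat{\mathcal{O}}[[S]]$. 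Therefore the ideal $J:=I+p^m\widehat{\mathcal{O}}[[S]]$ satisfies $D(J)\subset J$.

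Now $h\equiv 1\pmod{J}$. Then $Dh=D(uQ)=p^m(1+S)^{p^m}Q+u\,DQ$, which lies in $J$. Because $h$ is a unit, $Dh/h\in J$. Applying $D^{k-1}$ and using $D(J)\subset J$, we get $D^{k-1}(Dh/h)\in J$. Evaluating at $S=0$ kills $u$, since $u(0)=0$. So every element of $J$ evaluates to an element of $p^m\widehat{\mathcal{O}}$. This gives
\[
D^k\log(f_{\boldsymbol{\epsilon}_m})(0)\equiv\delta_k(\boldsymbol{\beta}_m)\pmod{p^m},
\]
as desired.

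The only step needing care is that both $D\log f_{\boldsymbol{\epsilon}_m}$ and $D\log(g_{\boldsymbol{\beta}_m}\circ\eta)$ are genuinely integral power series, so that the difference manipulation is legitimate. This holds because both $f_{\boldsymbol{\epsilon}_m}$ and $g_{\boldsymbol{\beta}_m}\circ\eta$ are units in $\widehat{\mathcal{O}}[[S]]$; this is the same remark the paper makes right before defining $\delta_k$. With that in place, the argument is routine. Note that the restriction $k\ge1$ is exactly what ensures we differentiate at least once, so that the constant-term ambiguity of $\log$ never enters.
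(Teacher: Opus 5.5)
Your proof is correct and follows the paper's argument: both set $h=f_{\boldsymbol{\epsilon}_m}/(g_{\boldsymbol{\beta}_m}\circ\eta)=1+\bigl((1+S)^{p^m}-1\bigr)Q$ using Proposition~\ref{prop: relation of log derivative}~(i), and then show $D^k\log h(0)\equiv 0\pmod{p^m}$. The only difference is packaging: the paper notes that $D^r(h-1)(0)\equiv 0\pmod{p^m}$ for $r\ge1$ and that $D^k\log h(0)$ is a polynomial without constant term in these values, whereas you obtain the same conclusion from the $D$-stable ideal $\bigl((1+S)^{p^m}-1\bigr)+p^m\widehat{\mathcal{O}}[[S]]$.
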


\begin{proof}
    By Proposition~\ref{prop: relation of log derivative}~(i), we may write
    \[
    h(S):=\frac{f_{\boldsymbol{\epsilon}_m}(S)}
    {(g_{\boldsymbol{\beta}_m}\circ\eta)(S)}
    =
    1+\left((1+S)^{p^m}-1\right)Q(S)
    \]
    for some $Q(S)\in\widehat{\mathcal{O}}[[S]]$.
    Hence $D^r(h-1)(0)\equiv 0\pmod{p^m}$ for $r \ge 1$.
    Since $D^k\log h(0)$ is a polynomial with zero constant term in
    $D(h-1)(0),\ldots,D^k(h-1)(0)$, we obtain
    \[
    D^k\log h(0)\equiv 0\pmod{p^m}.
    \]
    This proves the desired congruence.
\end{proof}

\subsection{Proof of the main formula}

We recall that $(\cdot,\cdot)_{m,n}\colon \mathcal{F}_m(\mu_{p^n})^\times \times \mathcal{F}_m(\mu_{p^n})^\times \to \mu_{p^n}$ denotes the Hilbert norm residue symbol defined in Definition~\ref{def:Hilbert_symbol}.

\begin{theorem}\label{thm:congruence}
    Let $\boldsymbol{\beta} = (\boldsymbol{\beta}_m)_m = (\beta_{m,n})_{m,n} \in \mathbf{U}_\infty^\E$. 
    Then, for each integer $m\ge1$ and each pair of positive integers $(i,j)$,
    the following holds:
    \[
    \left(\prod_{\sigma\in\gal(K(E[p^m])/K)} \sigma\left(\theta_{m,m}(\mathfrak{a})\right)^{\kappa_1^{i-1}\kappa_2^{j-1}(\sigma)}, \beta_{m,m} \right)_{m,m} 
    = \zeta_{p^m}^{A}, 
    \]
    where 
    \begin{align*}
        A \equiv 
        &(-1)^{i-1}(1-\pi^{j-1}\bar{\pi}^{i-1}) \sum_{\tau\in \gal(F_m/K)} \kappa_2^{j-1}(\tau)  D^{1-i}\intlog(g_{\widetilde{\tau}(\boldsymbol{\theta}_m(\mathfrak{a}))}\circ\eta)^*(0) \\
        &\phantom{(-1)^{i-1}(1-\pi^{i-1}\bar{\pi}^{j-1})} \cdot \sum_{\sigma\in \gal(F_m/K)} \kappa_2^{-j}(\sigma)\delta_i(\widetilde{\sigma}(\boldsymbol{\beta}_m))^* \pmod{p^m}.
    \end{align*}
    Here, $\widetilde{\sigma}, \widetilde{\tau} \in G$ are lifts of $\sigma$ and 
    $\tau$, respectively, whose actions on $E[\frp^\infty]$ are trivial, and the 
    right-hand side of the above congruence is independent of the choice of these 
    lifts.
\end{theorem}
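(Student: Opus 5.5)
We apply Coleman's explicit reciprocity law, Theorem~\ref{thm: Coleman}, at level $n=m$, taking $\boldsymbol{\epsilon}_m\in\mathbf{U}_m^\cyc$ to correspond to $\boldsymbol{\beta}_m$. For the first argument we need a power series $f\in(1+S\mathcal{O}_m[[S]])^{\cnr=\phi}$ whose value at $\zeta_{p^m}-1$ is $\prod_\sigma\sigma(\theta_{m,m}(\mathfrak{a}))^{\kappa_1^{i-1}\kappa_2^{j-1}(\sigma)}$.

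\textbf{Step 1: the first argument.} Let $\boldsymbol{\epsilon}'$ correspond to $\boldsymbol{\theta}(\mathfrak{a})$. Using the cyclotomic Coleman power series $f_{\boldsymbol{\epsilon}'_m}$ and Lemma~\ref{lem: fundamental properties of Coleman power ser.}~(i), we set $f:=f_{\boldsymbol{\epsilon}'_m}^{\,\Theta}$ with $\Theta=\sum_\sigma\kappa_1^{i-1}\kappa_2^{j-1}(\sigma)\,\sigma\in\mathbb{Z}_p[G]$, reduced modulo $p^m$ as needed. Up to a $\phi^{-m}$ twist, which is absorbed by reindexing $\sigma\mapsto\phi^{m}\sigma$, this $f$ interpolates the required element at $\zeta_{p^m}-1$. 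To make $f(0)=1$, we divide by a constant; the constant contributes a symbol that is a $p^m$-th power modulo the appropriate group and so disappears, or we replace $f$ by $\langle\cdot\rangle$-normalized versions. The norm relation $\cnr(f)=f^\phi$ follows from Lemma~\ref{lem: fundamental properties of Coleman power ser.}~(ii), since $\Theta$ commutes with $\cnr$.

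\textbf{Step 2: evaluating $\int_m\intlog(f)\cdot D\intlog(\phi^{-m}g)$.} Both factors have $\ctr=0$ by \eqref{eq:integral_log}, so their Amice measures are supported on $\mathbb{Z}_p^\times$. Coleman's pairing $\int_m F\cdot G$ is computed modulo $p^m$ via the orthogonality $\frac1{p^m}\sum_\zeta\zeta^{x+y}$, which equals $1$ when $x\equiv -y\pmod{p^m}$. Hence
\[
\int_m F\,G \equiv \int_{\mathbb{Z}_p^\times}\int \mathbf{1}_{x+y\equiv0}\,d\mu_F\,d\mu_G .
\]
Writing $\mathbf{1}$ via characters of $(\mathbb{Z}/p^m)^\times$, and using that $\Theta$ projects onto an eigencomponent, we get $\intlog(f)=\Theta\cdot\intlog(f_{\boldsymbol{\epsilon}'_m})$. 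The equivariance turns $\Theta$ into the $x^{i-1}$-isotypic part, $\mu_{\intlog(f)}\approx x^{1-i}$-weighted. The pairing then factorizes modulo $p^m$ as
\[
(-1)^{i-1}\cdot D^{1-i}\intlog(\cdots)(0)\cdot D^{i}\intlog(\phi^{-m}g_{\boldsymbol{\epsilon}_m})(0),
\]
where the sign comes from $x\mapsto -x$. The trace $\tr_{\mathcal{F}_m/K_\frp}$, combined with the $\kappa_2$-part of $\Theta$, produces the two sums over $\gal(F_m/K)$ with weights $\kappa_2^{j-1}(\tau)$ and $\kappa_2^{-j}(\sigma)$. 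Here we reorganize using Lemma~\ref{lem: fundamental properties of Coleman power ser.}~(i) and the lifts $\widetilde\tau$, as in the proof of Lemma~\ref{lem: approximation}, and pass to $(\cdot)^*$ using the $G$-equivariance.

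\textbf{Step 3: switching to the elliptic side.} Proposition~\ref{prop: relation of log derivative}~(ii) replaces $D^k\intlog(f_{\boldsymbol{\epsilon}_m})(0)$ by $D^k\intlog(g_{\boldsymbol{\beta}_m}\circ\eta)(0)$ modulo $p^m$, and likewise for $\boldsymbol{\theta}(\mathfrak{a})$. The $\phi^{-m}$ twist and the formula $D^i\intlog=\delta_i-p^{i-1}\phi\delta_i$, used exactly as in the second part of Lemma~\ref{lem: approximation}, convert the $\boldsymbol{\beta}$ factor into $(1-\pi^{j-1}\bar\pi^{i-1})\sum_\sigma\kappa_2^{-j}(\sigma)\delta_i(\widetilde\sigma(\boldsymbol{\beta}_m))^*$. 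Independence of the lifts follows as in Lemma~\ref{lem: approximation}.

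\textbf{Main obstacle.} The hard part is Step~2: carrying out the factorization of Coleman's pairing into the product of the two ``moments'' modulo $p^m$ while keeping track of the twisted action $\Theta$, the $\phi^{-m}$ normalizations, and the trace. This bookkeeping is what determines the sign $(-1)^{i-1}$ and which Euler-type factor $1-\pi^{j-1}\bar\pi^{i-1}$ appears. I would first verify it in the case $m=1$, then run the general computation with Dirac-measure manipulations in the style of Lemma~\ref{lem: shift formula}.
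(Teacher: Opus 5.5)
\textbf{Verdict: genuine gap (Step~1, constant term).} Your overall route is the paper's: Coleman's law at level $m$, a weight-accelerator factorization, splitting the trace into the two $\gal(F_m/K)$-sums, and transfer via Proposition~\ref{prop: relation of log derivative} and the Euler-factor argument of Lemma~\ref{lem: approximation}. Your orthogonality computation in Step~2 is a correct, self-contained substitute for \cite[Lemma~(4.6)]{Col89}, but only for the part of $\Theta$ that acts trivially on coefficients. The paper therefore splits $\gal(K(E[p^m])/K)=\Gamma_m\times R_m$ and pulls the $R_m$-part out of the symbol by bilinearity as exponents $\kappa_2^{j-i}(\tau)$.

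The gap is in Step~1. Theorem~\ref{thm: Coleman} requires $f\in(1+S\mathcal{O}_m[[S]])^{\cnr=\phi}$, and dividing by a constant destroys the norm relation. A constant $c$ has $\cnr(c)=c^p$, so $\cnr(f/c)=(f/c)^\phi$ would force $c^p=\phi(c)$, which for a principal unit means $c=1$. So the normalized series lies outside the scope of the reciprocity law.

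Your claim that the constant's symbol vanishes is also unsupported. One has $f^\Theta(0)=\prod_\tau\tau(c)^{e_\tau}$ with $e_\tau\equiv\kappa_2^{j-1}(\tau)\sum_{a\in(\mathbb{Z}/p^m\mathbb{Z})^\times}a^{i-1}\pmod{p^m}$. When $(p-1)\mid(i-1)$ (e.g.\ $i=1$, which the statement allows), this sum has $p$-adic valuation exactly $m-1$, so $f^\Theta(0)$ need not be a $p^m$-th power. Passing to $\langle\cdot\rangle$-normalized versions is vacuous: the $\theta_{m,n}(\mathfrak{a})$ are already principal units, which only gives $f(0)\equiv1\pmod p$.

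The missing input is an arithmetic property of this particular system. The paper combines \cite[II.4.9, Proposition~(ii)]{dS87} with Proposition~\ref{prop: CPS a.w. ell}~(i) to get $g_{\boldsymbol{\theta}_m(\mathfrak{a})}(0)=1$. Proposition~\ref{prop: relation of log derivative}~(i) then gives $f_{\boldsymbol{e}_m(\mathfrak{a})}(0)=1$. Hence Theorem~\ref{thm: Coleman} applies directly to $\bigl(f_{\boldsymbol{e}_m(\mathfrak{a})}^{\phi^{-m}\tau}\bigr)^{\omega_m(i-1)}$ with no normalization at all.

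There is also a smaller slip: the $\phi^{-m}$ twist is not absorbed by reindexing. Reindexing multiplies every exponent by the value of $\kappa_1^{i-1}\kappa_2^{j-1}$ at the reindexing element, a unit that is in general $\not\equiv1\pmod{p^m}$. Instead, put $f^{\phi^{-m}}$ (which still satisfies $\cnr=\phi$) in the first slot. Then both factors of Coleman's pairing carry $\phi^{-m}$, and it drops out by $\phi$-invariance of $\tr_{\mathcal{F}_m/K_\frp}$. This is how the paper removes it before extracting $1-p^{i-1}\kappa_2^{j-i}(\phi)=1-\pi^{j-1}\bar{\pi}^{i-1}$.
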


\begin{proof}
    First, we rewrite the first entry on the left-hand side of the asserted equation.
    Let $\Gamma_m,R_m\subset G$ be sets of lifts of
    $\gal(K(\mu_{p^m})/K)$ and $\gal(F_m/K)$, respectively, such that
    the elements of $\Gamma_m$ act trivially on $F_m$, while those of $R_m$
    act trivially on $K(\mu_{p^m})$.
    Then we have a bijection
    \[
    \Gamma_m\times R_m \xrightarrow{\sim}\gal(K(E[p^m])/K),\; (\tau_1,\tau_2) \mapsto \left.\tau_1\tau_2\right|_{K(E[p^m])}.
    \]
    For $\sigma\in\gal(K(E[p^m])/K)$, let $(\tau_1,\tau_2)\in\Gamma_m\times R_m$ 
    be the unique pair corresponding to $\sigma$.
    The relation $\kappa_1\kappa_2=\chi_{\cyc}$ then gives
    \begin{equation}\label{eq:character_relation}
        \kappa_1^{i-1}\kappa_2^{j-1}(\sigma) = \chi_{\cyc}^{i-1}(\sigma)\kappa_2^{j-i}(\sigma)
        \equiv \chi_{\cyc}^{i-1}(\tau_1)\kappa_2^{j-i}(\tau_2) \pmod{p^m}.
    \end{equation}
    Let $(\boldsymbol{e}_m(\mathfrak{a}))_m = (e_{m,n}(\mathfrak{a}))_{m,n}
    \in \mathbf{U}_\infty^\cyc$ be the norm-compatible system corresponding
    under the isomorphism $\mathbf{U}_\infty^\E \cong \mathbf{U}_\infty^\cyc$
    to $\boldsymbol{\theta}(\mathfrak{a})\in\mathbf{U}_\infty^\E$
    defined by \eqref{def:elliptic_unit},
    so that $e_{m,n}(\mathfrak{a}) = \theta_{m,n}(\mathfrak{a})$ for $n\le m$.
    By \cite[II.4.9, Proposition (ii)]{dS87} and Proposition~\ref{prop: CPS a.w. ell} (i), 
    we have $g_{\boldsymbol{\theta}_m(\mathfrak{a})}(0)=1$.
    Hence, Proposition~\ref{prop: relation of log derivative} (i) and
    Lemma~\ref{lem: fundamental properties of Coleman power ser.} (ii) give
    \begin{equation}\label{eq:constant}
        f_{\boldsymbol{e}_m(\mathfrak{a})}(S) \in (1+S\mathcal{O}_m[[S]])^{\cnr=\phi}.
    \end{equation}
    For each $k \ge 0$, we also define $\omega_m(k) := \sum_{\sigma\in\Gamma_m} 
    \chi_\cyc^k(\sigma)\sigma \in \mathbb{Z}_p[G]$, which is the image of 
    the basic element under the ``weight accelerator'' homomorphism 
    (see the discussion preceding \cite[Proposition 4.2]{NSW17a}). 
    Then, we have
    \begin{align*}
        \left(f_{\boldsymbol{e}_m(\mathfrak{a})}^{\phi^{-m}}\right)^{\omega_m(i-1)}(\zeta_{p^m}-1) 
        := f_{\boldsymbol{e}_m(\mathfrak{a})}^{\phi^{-m}}(S)^{\omega_m(i-1)}\big|_{S=\zeta_{p^m}-1} 
        \phantom{:}= \prod_{\tau\in\Gamma_m} \tau\left(e_{m,m}(\mathfrak{a})\right)^{\chi_{\mathrm{cyc}}^{i-1}(\tau)}. 
    \end{align*}
    Therefore, by this identity and \eqref{eq:character_relation}, we have
    \begin{align*}
        \prod_{\sigma \in \mathrm{Gal}(K(E[p^m])/K)} \sigma\left(\theta_{m,m}(\mathfrak{a})\right)^{\kappa_1^{i-1}\kappa_2^{j-1}(\sigma)} 
        &\equiv \prod_{\tau_2\in R_m} \tau_2 \left( \prod_{\tau_1\in\Gamma_m} \tau_1\left(e_{m,m}(\mathfrak{a})\right)^{\chi_{\mathrm{cyc}}^{i-1}(\tau_1)} \right)^{\kappa_2^{j-i}(\tau_2)} \\
        &\equiv \prod_{\tau\in \gal(F_m/K)} \left(\left(f_{\boldsymbol{e}_m(\mathfrak{a})}^{\phi^{-m}\tau}\right)^{\omega_m(i-1)}(\zeta_{p^m}-1) \right)^{\kappa_2^{j-i}(\tau)}, 
    \end{align*}
    modulo $\mathcal{F}_m(\mu_{p^m})^{\times p^m}$. 
    Now, we take the norm-compatible system $(\boldsymbol{\epsilon}_m)_m=(\epsilon_{m,n})_{m,n} \in \mathbf{U}_\infty^\cyc$ corresponding to $\boldsymbol{\beta} \in \mathbf{U}_\infty^\E$, 
    so that $\epsilon_{m,n} = \beta_{m,n}$ for $n\le m$, and let
    $h_{\boldsymbol{\epsilon}_m}(S)$ denote the Coleman power series associated
    with $\boldsymbol{\epsilon}_m$.
    By \eqref{eq:constant}, we may apply Theorem~\ref{thm: Coleman} to obtain
    \[
    \left(
    \left(f_{\boldsymbol{e}_m(\mathfrak{a})}^{\phi^{-m}\tau}\right)^{\omega_m(i-1)}
    (\zeta_{p^m}-1), \epsilon_{m,m}
    \right)_{m,m}
    =
    \zeta_{p^m}^{\alpha(\tau)},
    \]
    where, by \cite[Lemma (4.6)]{Col89}, the exponent $\alpha(\tau)$ is given by
    \begin{align*}
        \alpha(\tau) &\equiv \tr_{\mathcal{F}_m/K_{\mathfrak{p}}}\left(\int_m \omega_m(i-1)\intlog\left(f_{\boldsymbol{e}_m(\mathfrak{a})}^{\phi^{-m}\tau}\right)(S) \cdot D\intlog\left(h_{\boldsymbol{\epsilon}_m}^{\phi^{-m}}\right)(S)\right) \\
        &\equiv \tr_{\mathcal{F}_m/K_{\mathfrak{p}}}\left((-1)^{i-1}D^{1-i}\intlog\left(f_{\boldsymbol{e}_m(\mathfrak{a})}^{\phi^{-m}\tau}\right)(0)\cdot D^i\intlog\left(h_{\boldsymbol{\epsilon}_m}^{\phi^{-m}}\right)(0)\right) \\
        &\equiv (-1)^{i-1}\tr_{\mathcal{F}_m/K_{\mathfrak{p}}}\left(D^{1-i}\intlog(f_{\boldsymbol{e}_m(\mathfrak{a})}^\tau)(0)\cdot (1 - p^{i-1}\phi)D^i(\log h_{\boldsymbol{\epsilon}_m})(0)\right)\pmod{p^m}. 
    \end{align*}
    Here, in the last congruence, we use that $\phi$ commutes with $\intlog$ and $D$. 
    From the bilinearity of the pairing, we obtain
    \[
    \left(\prod_{\tau \in \gal(F_m/K)} \left(\left(f_{\boldsymbol{e}_m(\mathfrak{a})}^{\phi^{-m}\tau}\right)^{\omega_m(i-1)}(\zeta_{p^m}-1)\right)^{\kappa_2^{j-i}(\tau)}, \epsilon_{m,m}\right)_{m,m} 
    = \zeta_{p^m}^{\sum_{\tau \in \gal(F_m/K)} \kappa_2^{j-i}(\tau) \cdot \alpha(\tau)}.
    \]
    Thus, the exponent on the right-hand side is precisely the desired quantity $A$.
    As in the proof of Lemma~\ref{lem: approximation}, we have
    \begin{align*}
        A &\equiv (-1)^{i-1}\tr_{\mathcal{F}_m/K_{\mathfrak{p}}}\left(\sum_{\tau\in \gal(F_m/K)} \kappa_2^{j-i}(\tau) D^{1-i}\intlog\left(f_{\boldsymbol{e}_m(\mathfrak{a})}^{\tau}\right)(0)\cdot (1 - p^{i-1}\phi)D^i(\log h_{\boldsymbol{\epsilon}_m})(0)\right) \\
        &\equiv (-1)^{i-1}(1-p^{i-1}\kappa_2^{j-i}(\phi)) \tr_{\mathcal{F}_m/K_\frp}\left(\sum_{\tau\in \gal(F_m/K)}\kappa_2^{j-i}(\tau)D^{1-i}\intlog\left(f_{\boldsymbol{e}_m(\mathfrak{a})}^{\tau}\right)(0)\cdot D^i(\log h_{\boldsymbol{\epsilon}_m})(0)\right) \\
        &\equiv (-1)^{i-1}(1 - \pi^{j-1}\bar{\pi}^{i-1}) \sum_{\sigma, \tau\in \gal(F_m/K)} \kappa_2^{j-i}(\tau) D^{1-i}\intlog\left(f_{\boldsymbol{e}_m(\mathfrak{a})}^{\sigma\tau}\right)^*(0) \cdot D^i(\log h_{\boldsymbol{\epsilon}_m}^\sigma)^*(0) \pmod{p^m}.
    \end{align*}
    Here, in the last congruence, we use \cite[Chap.~II, Corollary~8.4]{Neu99}. 
    Making the change of variables $\tau'=\sigma\tau$ in the double sum, we obtain
    \begin{align*}
        A &\equiv (-1)^{i-1}(1-\pi^{j-1}\bar{\pi}^{i-1}) \left(\sum_{\tau'\in \gal(F_m/K)}\kappa_2^{j-i}(\tau')  D^{1-i}\intlog\left(f_{\boldsymbol{e}_m(\mathfrak{a})}^{\tau'}\right)^*(0)\right) \\
        &\phantom{= (-1)^{i-1}(1-\pi^{i-1}\bar{\pi}^{j-1}) \quad} \cdot \left(\sum_{\sigma\in \gal(F_m/K)} \kappa_2^{i-j}(\sigma)D^i(\log h_{\boldsymbol{\epsilon}_m}^\sigma)^*(0)\right) \pmod{p^m}.
    \end{align*}
    For each $\tau \in \gal(F_m/K)$, take a lift $\widetilde{\tau} \in G$ acting trivially on $E[\frp^\infty]$. 
    Then, Lemma~\ref{lem: fundamental properties of Coleman power ser.} (i) and Proposition~\ref{prop: relation of log derivative} (ii) yield 
    \begin{align*}
        D^{1-i}\intlog(f_{\boldsymbol{e}_m(\mathfrak{a})}^\tau)^*(0) &\equiv \kappa_2^{i-1}(\tau) D^{1-i}\intlog(f_{\widetilde{\tau}(\boldsymbol{e}_m(\mathfrak{a}))})^*(0) \\
        &\equiv \kappa_2^{i-1}(\tau) D^{1-i}\intlog(g_{\widetilde{\tau}(\boldsymbol{\theta}_m(\mathfrak{a}))}\circ\eta)^*(0) \pmod{p^m}. 
    \end{align*}
    Similarly, by Lemma~\ref{lem: relation of ordinary log derivative}, we have
    \begin{align*}
        D^i(\log h_{\boldsymbol{\epsilon}_m}^\sigma)^*(0) \equiv \kappa_2^{-i}(\sigma) D^i(\log h_{\widetilde{\sigma}(\boldsymbol{\epsilon}_m)})^*(0) 
        \equiv \kappa_2^{-i}(\sigma) \delta_i(\widetilde{\sigma}(\boldsymbol{\beta}_m))^* \pmod{p^m}. 
    \end{align*}
    Hence, we obtain the desired congruence.
\end{proof}

By pulling back the elliptic Soul'{e} character along the composite
\[
\prod_{v\mid\frp} G_{K(E[p^\infty])v}^{\ab}
\xrightarrow{\lambda_\frp}
G_{K(E[p^\infty])}^{\ab}
\longrightarrow
G_{K(p^\infty)}^{\ab},
\]
where $\lambda_\frp$ is the homomorphism in \eqref{eq:embedding_to_global},
we regard it as a character on
$\prod_{v\mid\frp} G_{K(E[p^\infty])v}^{\ab}$.
Recall also that the reciprocity map \eqref{eq:reciprocity-map} 
gives an embedding $\rec_\infty\colon \mathbf{U}_\infty^\E
\hookrightarrow \prod_{v\mid\frp} G_{K(E[p^\infty])_v}^{\ab}$.

\begin{corollary}[{Theorem~\ref{thm:intro_CM_elliptic_Coleman_Ihara}}]
\label{cor:CM_elliptic_Coleman_Ihara}
    For each pair of positive integers $(i,j) \neq (1,1)$ such that
    $i\equiv j\pmod{w_K}$, the following identity holds:
    \[
    \begin{aligned}
        \frac{\chi_{(i,j)}^\E(\rec_\infty(\boldsymbol{\beta}))}
        {1-\pi^{j-1}\bar{\pi}^{i-1}}
            &= 12w_K
            \left(
            1-\frac{1}{1-\pi^{i-1}\bar{\pi}^{j-1}}
            -\frac{1}{1-\pi^{j-1}\bar{\pi}^{i-1}}
            \right) \\
            &\phantom{={}}
            \cdot
            \frac{L_p(1-i,1-j,\omega^{(i,j)-1})}
            {\Omega_p^{j-i}}
            \,\phi_{(i,j)}^\cw(\boldsymbol{\beta}) \qquad (\boldsymbol{\beta}\in \mathbf{U}_\infty^\E).
    \end{aligned}
    \]
\end{corollary}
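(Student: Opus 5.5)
The corollary follows from Theorem~\ref{thm:congruence} applied with the elliptic units $\boldsymbol{\theta}(\mathfrak{a})$, combined with Proposition~\ref{prop: compare character} and Proposition~\ref{prop:L-value}. The plan is to pass to the limit $m\to\infty$ in Theorem~\ref{thm:congruence}, identify each side, and then cancel the auxiliary factor depending on $\mathfrak{a}$. All terms are viewed in $\mathbb{Z}_p$ via the fixed generators.

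\emph{Step 1: the left-hand side.} Fix $\boldsymbol{\beta}\in\mathbf{U}_\infty^\E$ and a nontrivial $\mathfrak{a}$ prime to $p$. By Definition~\ref{def:Hilbert_symbol}, the symbol $(x,\beta_{m,m})_{m,m}$ equals $(x^{1/p^m})^{\rec_{m,m}(\beta_{m,m})-1}$. Here $x$ is the product $\prod_\sigma\sigma(\theta_{m,m}(\mathfrak{a}))^{\kappa_1^{i-1}\kappa_2^{j-1}(\sigma)}$.
\begin{itemize}
\item Since $\theta_{m,m}(\mathfrak{a})$ is the principal part of $\theta_{\mathfrak{a}}(\Omega/p^m;L)$, the torsion part is of order prime to $p$ and is killed up to $p^m$-th powers after adjusting exponents.
\item Summing over $\gal(K(E[p^m])/K)$ instead of $\gal(K(p^m)/K)$ introduces a factor $\#\Delta$ (modulo $p^m$), because $\kappa_1^{i-1}\kappa_2^{j-1}$ factors through $\gal(K(p^m)/K)$.
\end{itemize}
Comparing with Definition~\ref{def:elliptic_soule}, the exponent is then $\#\Delta\,\chi^\E_{(i,j),\mathfrak{a}}(\rec_\infty(\boldsymbol{\beta}))$ modulo $p^m$. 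By compatibility of $\lambda_\frp$ with global Kummer theory, this is the local evaluation of the global character. Proposition~\ref{prop: compare character} then converts it to $(-1)^i(\kappa_1^{1-i}\kappa_2^{1-j}(\sigma_\mathfrak{a})-\mathrm{N}\mathfrak{a})\chi^\E_{(i,j)}$, divided by $w_K\bigl(\tfrac{1}{1-\pi^{i-1}\bar\pi^{j-1}}+\tfrac{1}{1-\pi^{j-1}\bar\pi^{i-1}}-1\bigr)$.

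\emph{Step 2: the right-hand side.} The second sum in $A$ is, by Definition~\ref{def:coates-wiles}, $\Omega_p^{i-j}\phi^\cw_{(i,j)}(\boldsymbol{\beta})$ modulo $p^m$. For the first sum, apply the first congruence of Lemma~\ref{lem: approximation} with the pair $(1-i,1-j)$:
\[
\sum_{\tau}\kappa_2^{j-1}(\tau)D^{1-i}\intlog(g_{\widetilde\tau(\boldsymbol\theta_m(\mathfrak a))}\circ\eta)^*(0)\equiv\int_G\kappa_1^{1-i}\kappa_2^{1-j}\,d\mu^*_{\boldsymbol{\theta}(\mathfrak{a})}\pmod{p^m}.
\]
Proposition~\ref{prop:L-value} evaluates this integral as $12\#\Delta\,(\kappa_1^{1-i}\kappa_2^{1-j}(\sigma_\mathfrak a)-\mathrm N\mathfrak a)\,L_p(1-i,1-j,\omega^{(i,j)-1})$. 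Here one must check that $\omega^{-(1-i,1-j)}=\omega^{(i,j)-1}$, in the notation of the statement.

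\emph{Step 3: equate and cancel.} Letting $m\to\infty$ gives an identity in $\mathbb{Z}_p$. The factors $\#\Delta$ and $(\kappa_1^{1-i}\kappa_2^{1-j}(\sigma_\mathfrak a)-\mathrm N\mathfrak a)$ appear on both sides. To cancel the latter we choose $\mathfrak{a}$ so that it is nonzero. This is possible by Chebotarev: $\kappa_1^{1-i}\kappa_2^{1-j}(\sigma_\mathfrak a)=\mathrm N\mathfrak a$ for all $\mathfrak a$ would force $\kappa_1^{1-i}\kappa_2^{1-j}=\kappa_1\kappa_2$ on a dense set, which is impossible since $(i,j)\neq(1,1)$ while $i,j\ge1$. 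Moving the factor $w_K(\cdots)$ to the right then turns $\tfrac{1}{1-\pi^{i-1}\bar\pi^{j-1}}+\tfrac{1}{1-\pi^{j-1}\bar\pi^{i-1}}-1$ into the bracket of the statement, up to the sign.

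\emph{Main obstacle: the sign and the non-vanishing of the $\mathfrak{a}$-factor.} The bookkeeping must produce exactly the displayed bracket with the factor $1-\pi^{j-1}\bar\pi^{i-1}$ on the left. The sign $(-1)^{i-1}$ from Theorem~\ref{thm:congruence} combines with $(-1)^i$ from Proposition~\ref{prop: compare character} to give $-1$, and this $-1$ flips the bracket. The exponent normalisation of $\Omega_p$ (namely $\Omega_p^{i-j}$ from $\phi^\cw$ against the $\Omega_p^{j-i}$ in the denominator) must also be tracked. The step I expect to need the most care is justifying that the factor $\kappa_1^{1-i}\kappa_2^{1-j}(\sigma_\mathfrak a)-\mathrm N\mathfrak a$ can be cancelled, which rests on exhibiting an ideal $\mathfrak a$ with this value nonzero. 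If $\mu_{p-1}$-torsion issues interfere, a limiting argument over suitably chosen $\mathfrak{a}$ is the fallback.
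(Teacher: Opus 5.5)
Your proposal is correct and follows the paper's proof. You identify the exponent $A$ of Theorem~\ref{thm:congruence} in two ways: once as $\#\Delta\,\chi^{\E}_{(i,j),\mathfrak{a}}(\rec_\infty(\boldsymbol{\beta}))$, and once, via Lemma~\ref{lem: approximation}, Definition~\ref{def:coates-wiles} and Proposition~\ref{prop:L-value}, as the $L$-value times the Coates--Wiles term; you then conclude with Proposition~\ref{prop: compare character}.

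Your explicit choice of $\mathfrak{a}$ with $\kappa_1^{1-i}\kappa_2^{1-j}(\sigma_{\mathfrak{a}})\neq\mathrm{N}\mathfrak{a}$ makes precise a cancellation the paper leaves implicit. One correction to your justification: such an $\mathfrak{a}$ exists because $\kappa_1^{i}\kappa_2^{j}$ is nontrivial on $\mathcal{G}$, which holds since $(i,j)\neq(0,0)$; the hypothesis $(i,j)\neq(1,1)$ plays no role at that point.
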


\begin{proof}
    For each $m\ge 1$, by Lemma~\ref{lem: approximation} and 
    Definition~\ref{def:coates-wiles}, the exponent $A$ in 
    Theorem~\ref{thm:congruence} can be written as follows:
    \[
    A \equiv (-1)^{i-1}(1-\pi^{j-1}\bar{\pi}^{i-1})\Omega_p^{i-j}\int_G \kappa_1^{1-i}\kappa_2^{1-j}(\sigma)\,d\mu_{\boldsymbol{\theta}(\mathfrak{a})}^* \cdot \phi_{(i,j)}^\cw(\boldsymbol{\beta}) \pmod{p^m}.
    \]
    On the other hand, by Lemma~\ref{lem: algebraicity of theta}~(ii),
    we have $\theta_{\mathfrak{a}}(\Omega/p^m;L) \in K(p^m)$.
    Since $i \equiv j \pmod{w_K}$ and the Hilbert symbol defined in
    Definition~\ref{def:Hilbert_symbol} is independent of the
    $k_m^\times$-components of its entries, Definition~\ref{def:elliptic_soule}
    gives
    \[
    A \equiv \#\Delta\, \chi_{(i,j),\mathfrak{a}}^\E \bigl(\rec_\infty(\boldsymbol{\beta})\bigr)
    \pmod{p^m}.
    \]
    Therefore, Proposition~\ref{prop: compare character} and Proposition~\ref{prop:L-value} 
    yield the desired equality.
\end{proof}

\begin{remark}
In \cite[Corollary~1.10]{IG26}, we obtained a CM elliptic analogue of the
Coleman--Ihara formula along the cyclotomic line. We now restrict the above
corollary to the cyclotomic line, i.e., to $i=j$, and verify that 
the resulting identity agrees with the formula obtained there. To this end, 
we first compare the choices of generators used in the two formulations.

Let $\chi_m^\E$ denote the elliptic Soul\'{e} character defined in
\cite{IG26}. Recall that there we used
$(-\omega_{\frp,n},\omega_{\bar{\frp},n})$ as a basis of
$E[p^n]\cong E[\frp^n]\oplus E[\bar{\frp}^n]$
and $\exp\left(2\boldsymbol{\pi}\boldsymbol{i}/p^n\right)
= \zeta_{p^n}^{-\sqrt{-d_K}}$
as a generator of $\mu_{p^n}$.
By comparing the Kummer properties characterizing the elliptic Soul\'e
characters under these choices with those under the choices made in
Definition~\ref{def:elliptic_soule}, we obtain the following identity of
characters on $G_{K(p^\infty)}$:
\begin{equation}\label{eq:comparison of elliptic soule characters}
    \chi_{(m,m)}^\E
    =
    (-1)^m\sqrt{-d_K}\,\chi_m^\E.
\end{equation}
The same change of generator also affects the normalization of the
Coates--Wiles homomorphisms. 
In \cite{IG26}, we used the one-variable Coates--Wiles homomorphism
$\phi_m^\cw$ of \cite{NSW17a}, defined on $\mathbf{U}_\infty$, the group of
norm-compatible systems of principal units in the cyclotomic tower.
The cyclotomic specialization $\phi_{(m,m)}^\cw$ defined in
Definition~\ref{def:coates-wiles} is compatible with $\phi_m^\cw$ via the
norm map $\mathbf{U}_\infty^\E\to \mathbf{U}_\infty$. 
Under this compatibility, the change of generator gives
\begin{equation}\label{eq:comparison of coates-wiles homomorphisms}
    \phi_{(m,m)}^\cw
    =
    (-\sqrt{-d_K})^m\phi_m^\cw.
\end{equation}

We next explain a correction to the comparison between the Gauss sum and
Katz's local factor underlying Gross's factorization formula
\cite[Theorem]{Gr80}.
Let $\chi_\infty$ be a finite-order complex-valued character of
$\gal(K(p^\infty)/\mathbb{Q})$ of conductor $p^n$.
On p.~88 of \cite{Gr80}, Gross defines the Gauss sum by
\[
g(\chi_\infty)
=
\frac{1}{p^n}
\sum_{a=1}^{p^n}
\chi_\infty(a)
\exp\left(\frac{2\boldsymbol{\pi}\boldsymbol{i}a}{p^n}\right).
\]
When $\chi_\infty$ is even, Gross claims in \cite[\S2.5, (3)]{Gr80} that
Katz's local factor
$\operatorname{Local}(\chi_\infty)
:=\operatorname{Local}(\chi,\Sigma,\delta)$, with
$\delta=\sqrt{-d_K}/2$, satisfies
$\operatorname{Local}((\chi_K)_\infty)=g(\chi_\infty^{-1})$,
where $(\chi_K)_\infty$ denotes the restriction of $\chi_\infty$ to
$\gal(K(p^\infty)/K)$. However, for the choice
$\delta=\sqrt{-d_K}/2$, the additive character used in the Fourier
transform defining $\operatorname{Local}(\chi_\infty)$ is the one
determined by $\zeta_{p^n}^{-1}$. Therefore, the correct relation is
\[
g(\chi_\infty)
=
\chi_\infty(\sqrt{-d_K})\operatorname{Local}(\chi_\infty).
\]
Thus, writing $L_p^{\mathrm{Katz}}(s,\chi)$ for Katz's $p$-adic
$L$-function defined in \cite[(2.4)]{Gr80}, we find that the left-hand
side of the equality in \cite[\S2.5, (3)]{Gr80},
\[
L_p^{\mathrm{Katz}}(0,\chi_K)
=
-\frac{1}{3p^n}g(\chi^{-1})
\sum_A\chi(a)\log_p F^+(a)_p,
\]
should be multiplied by $\chi(\sqrt{-d_K})$, so that \cite[(3.4)]{Gr80}
should read
\[
\chi(\sqrt{-d_K})L_p^{\mathrm{Katz}}(0,\chi_K)
=
L_p(0,\chi\varepsilon\omega)L_p(1,\chi^{-1}),
\]
where $\varepsilon$ is the Dirichlet character associated with $K$.
In the notation of \cite{Gr80}, let $d$ be the generator of the common
order-$2$ inertia subgroup $I_\ell$ in
$\gal(K(\mu_{p^\infty})/\mathbb{Q})/\langle c\rangle$ for primes
$\ell\mid d_K$, where $c$ denotes complex conjugation, and let
$\boldsymbol{N}=\chi_\cyc$ denote the cyclotomic character.
Then, with this correction, Gross's factorization formula takes the following
form:
\[
L_p^{\mathrm{Katz}}(s,\chi_K)
=
\begin{cases}
(\chi^{-1}\langle\boldsymbol{N}\rangle^s)
(\sigma_{\sqrt{-d_K}})
L_p(s,\chi\varepsilon\omega)
L_p(1-s,\chi^{-1}),
& \text{if } \chi(d)=1,\\[4pt]
(\chi\varepsilon\omega\langle\boldsymbol{N}\rangle^{1-s})
(\sigma_{\sqrt{-d_K}})
L_p(s,\chi\varepsilon\omega)
L_p(1-s,\chi^{-1}),
& \text{if } \chi(d)=-1.
\end{cases}
\]
If $s=m$ is even and $\chi=\omega^{-m}$, then
$\chi(d)=1$, and hence
\[
(\sqrt{-d_K})^{-m}L_p^{\mathrm{Katz}}(m,\omega^{-m})
=
L_p(m,\varepsilon\omega^{1-m})
L_p(1-m,\omega^m).
\]
On the other hand, if $s=m$ is odd and $\chi=\varepsilon\omega^{-m}$, 
then $\chi(d)=-1$, and we obtain
\[
(\sqrt{-d_K})^{m-1}L_p^{\mathrm{Katz}}(m,\omega^{-m})
=
L_p(m,\omega^{1-m})
L_p(1-m,\varepsilon\omega^m).
\]
Replacing Gross's factorization formula used in the proof of
\cite[Corollary~1.10]{IG26} with the above two formulas, and using
\eqref{eq:comparison of elliptic soule characters} and
\eqref{eq:comparison of coates-wiles homomorphisms}, we obtain, 
for both even and odd $m$, 
\[
\frac{\chi_{(m,m)}^\E(\rec_\infty(\boldsymbol{\beta}))}{1-p^{2m-2}}
=
3w_K L_p^{\mathrm{Katz}}(m,\omega^{-m})
\frac{\phi_{(m,m)}^\cw(\boldsymbol{\beta})}{p^{m-1}-1}
\qquad
(\boldsymbol{\beta}\in\mathbf{U}_\infty^\E).
\]

Finally, one checks that, for the choice $\delta=\sqrt{-d_K}/2$, 
Katz's $p$-adic $L$-function and the restriction to the cyclotomic line
of the two-variable $p$-adic $L$-function defined in
Definition~\ref{def:two-variable_p-adic_L} differ by a
factor of $4$ (see also \cite[Remarks~4.1 and~6.5]{BS22}):
\[
L_p^{\mathrm{Katz}}(s,\chi)
=
4L_p(s,s,\chi). 
\]
Hence, combining this relation with Katz's functional equation
\cite[(5.3.7)]{Ka78},
\begin{equation}\label{eq:functional_equation_Katz}
    L_p^{\mathrm{Katz}}(m,\omega^{-m})
    =
    L_p^{\mathrm{Katz}}(1-m,\omega^{m-1}),
\end{equation}
we verify that the corrected formula above agrees with the restriction of
Corollary~\ref{cor:CM_elliptic_Coleman_Ihara} to the cyclotomic line.
\end{remark}

\begin{remark}
The functional equation for the $p$-adic $L$-function is also stated in
\cite[II.6.4]{dS87}, but its proof requires a correction. 
Indeed, the argument on p.~95 asserts that, for a continuous $p$-adic
character $\varepsilon=\varepsilon_G\varepsilon_H$, the $p$-adic root
number $W^{p\text{-}\mathrm{adic}}(\varepsilon)$ depends only on the
finite component $\varepsilon_H$. In general, however,
$W^{p\text{-}\mathrm{adic}}(\varepsilon)$ also depends on $\varepsilon_G$.
After taking this dependence into account, the resulting functional equation
for a Gr\"ossencharacter $\varepsilon$ of infinity type $(k,j)$, in the
notation of \cite[II.6.4]{dS87}, is
\[
L_p(\varepsilon)
=
\bigl(d_K\operatorname{N}\mathfrak{f}\bigr)^{\frac{k+j+1}{2}}
W^{p\text{-}\mathrm{adic}}(\varepsilon)
\frac{\check{\varepsilon}(\sigma_{-\delta})}
     {\varepsilon(\sigma_{\delta})}
L_p(\check{\varepsilon}),
\]
where $\mathfrak{f}$ denotes the prime-to-$p$ part of the conductor of
$\varepsilon$.
In particular, when $\mathfrak{f}=(1)$, the product of the factors
multiplying $L_p(\check{\varepsilon})$ on the right-hand side is equal to
$1$. Hence,
\[
L_p(\varepsilon)=L_p(\check{\varepsilon}),
\]
which is consistent with Katz's functional equation
\eqref{eq:functional_equation_Katz}.
\end{remark}

{\bf Acknowledgements.}
The first author was supported by The Keio University Doctorate Student Grant-in-Aid Program from Ushioda Memorial Fund (Graduate school recommendation).
The second author was supported by JSPS KAKENHI Grant Number JP23KJ1882 and JP26K24484. 

\bibliographystyle{amsalpha}
\bibliography{reference}

\end{document}